\newcommand{\specialcell}[1]{\ifmeasuring@#1\else\omit$\displaystyle#1$\ignorespaces\fi}
\newtheorem{theorem}{Theorem}[section]
\newtheorem{corollary}[theorem]{Corollary}
\newtheorem{lemma}[theorem]{Lemma}
\newtheorem{prop}[theorem]{Proposition}
\theoremstyle{definition}
\newtheorem{assumption}[theorem]{Assumption}
\newtheorem{rem}[theorem]{Remark}
\newtheorem{example}[theorem]{Example}
\newcommand{\comment}[1]{}
\newcommand{\R}{\mathbb R}
\def\P{\mathbb P}
\def\d{\mathrm d}
\newcommand{\A}{\mathscr A}
\newcommand{\E}{\mathbb E}
\newcommand{\LL}{\mathcal L}
\newcommand{\N}{\mathbb N}
\newcommand{\e}{\varepsilon}
\newcommand{\sg}{\sigma}
\newcommand{\vphi}{\varphi}
\newcommand{\cadlag}{\text{c\`adl\`ag} }
\newcommand{\cd}{(\cdot)}
\newcommand{\1}{\mathbf 1}
\newcommand{\hx}{\hat x}
\makeatletter \@addtoreset{equation}{section}
\begin{document}
	
	\title{On an Ergodic  Two-Sided  Singular Control Problem}
	
\comment{	 \author[1]{Khwanchai Kunwai}
	 \author[2]{Fubao Xi}
	 \author[3]{George Yin}
	 \author[1]{Chao Zhu}
	
	\affil[1]{Department of Mathematical Sciences,   University of Wisconsin-Milwaukee,   Milwaukee, WI 53201,   USA,   {\tt kkunwai@uwm.edu}, {\tt zhu@uwm.edu}}
	\affil[2]{School of Mathematics and Statistics, Beijing Institute of Technology, Beijing 100081, China, {\tt xifb@bit.edu.cn}.}
	\affil[3]{Department of Mathematics,  University of Connecticut,   Storrs, CT 06269,  USA,  {\tt 	gyin@uconn.edu}}
	 \renewcommand\Authands{ and }}
	
\author{Khwanchai Kunwai,\thanks{Department of Mathematics, Faculty of Science, Chiang Mai University, Chiang Mai, 50200 Thailand,  {\tt khwanchai.kun@cmu.ac.th}}   \quad Fubao Xi,\thanks{School of Mathematics and Statistics, Beijing Institute of Technology, Beijing 100081, China, {\tt xifb@bit.edu.cn}. } 
 \quad George Yin,\thanks{Department of Mathematics,  University of Connecticut,   Storrs, CT 06269,  USA,  {\tt gyin@uconn.edu}.} 
  \quad and \quad Chao Zhu\thanks{Department of Mathematical Sciences,   University of Wisconsin-Milwaukee,   Milwaukee, WI 53201,   USA,   {\tt zhu@uwm.edu}. } 
  }

	\maketitle
	
\begin{abstract}
Motivated by applications in natural resource management, risk management,  and finance,  this paper is  focused on an ergodic two-sided singular control problem for a general one-dimensional diffusion process. The control is given by a bounded variation process.  Under some mild conditions, the optimal reward value as well as an optimal control policy are derived by the vanishing discount method. Moreover, the Abelian and Ces\`aro limits are established. Then a direct solution approach is provided at the end of the paper.

\medskip\noindent{\bf Key Words.} Ergodic singular control, vanishing discount, Abelian limit, Ces\`aro limit.
		
\medskip\noindent {\bf  2000 MR Subject Classification.} 93E20, 60H30, 60J70.
\end{abstract}
	
\section{Introduction}\label{sect:intro}
	
This work is motivated by
applications in   reversible investment problem (\cite{GuoP-05,DeAngelisF-14}), optimal harvesting and renewing problems (\cite{HeniT-20,HeniTPY-19}), and dividend payment and capital injection problems (\cite{JinYY-13,LindL-20}). In the aforementioned applications, the systems of interests are controlled by   bounded variation processes in order to achieve certain economic benefits. It is well known that any bounded variation process can be written  as a difference of two nondecreasing and \cadlag processes. Often the two nondecreasing processes will  introduce rewards and costs to the optimization  problems, respectively. For example, in  natural renewable resource management problems such as forestry, while harvesting brings profit, it is costly to renew the natural renewable resource.  One needs  to balance the harvesting and renewing decisions so as to achieve an optimal  reward.  Similar considerations prevail in  reversible investment and dividend payment and capital injection problems. In terms of the reward (or cost) functional, the two singular control processes have different signs. We call such problems    two-sided mixed singular control problems. In contrast, in  the   traditional singular control formulation, the reward or cost corresponding to the singular control is expressed in terms of the expectation of its total variation.

We note that most of the existing works  on two-sided mixed singular control problems are focused on discounted criterion.
  In many applications, however, optimizations using  discounted criteria  are not appropriate. For example, in optimal harvesting problems, discounted criteria are largely in favor of current interests and
   disregard the future effect. As a result, they
   may lead to myopic harvesting policy and extinction of the species. We refer to \cite{L-Oksendal,Song-S-Z,A-Shepp} for   examples in which the optimal policy under the discounted criterion is to harvest all at time 0, resulting in an immediate extinction.

 In view of these considerations, this paper aims to investigate a two-sided mixed singular control problem for a general one-dimensional diffusion on $[0,\infty)$ using a long-term average criterion. Ergodic singular control problems have been extensively studied in the literature;  see, for example, \cite{Hynd-13,JackZ-06,Kara-83,MenaR-13,Weera-02,Weerasinghe-07,WuChen-17}  and many others. In the aforementioned references, the cost (or reward) associated with the singular control  is expressed in terms of
expectation of the total variation.  Our formulation in \eqref{def: lambda_0} is different. In particular,  in  \eqref{def: lambda_0}, while $\eta(T)$ contributes positively toward the reward,  the  contribution from $\xi (T)$  is negative.
On the  one hand, this is motivated  by the specific applications in areas such as reversible investment,  harvesting and renewing, and dividend payment and capital injection problems.  On the other hand, the mixed signs also add an interesting twist to   singular control  theory as the gradient constraints in the corresponding HJB equation are different from those in the traditional setup as those in \cite{Kara-83,MenaR-13,Weera-02,Weerasinghe-07}.

To find the optimal value $\lambda_{0}$ of \eqref{def: lambda_0} and an optimal control policy, one may attempt to use the guess-and-check approach. That is, one  first finds a smooth solution to the HJB equation \eqref{e:HJB-lta}  and then uses   a verification argument  to derive the optimal long-term average value as well as an optimal control.  Indeed this is the approach used in \cite{Kara-83}, in which  the Abelian and Ces\`aro limits are established  for a singular control problem when the underlying process is a one-dimensional Brownian motion. The result was further extended to an ergodic   singular control problem for a one-dimensional diffusion process  in \cite{Weera-02} in which the drift and diffusion coefficients satisfy certain symmetry properties.

While this approach seems   plausible,  it is not easy to guess the right solution. In particular, since the underlying process $X_{0}$ is a general one-dimensional diffusion on $[0,\infty)$ (to be defined in \eqref{e:uncontrolled-SDE}) and the rewards associated with the singular controls $\xi$ and $\eta$ have mixed signs in \eqref{def: lambda_0},   our problem does not have the same  kind of symmetry as in \cite{Kara-83} and \cite{Weera-02}. In addition, the gradient constraint $c_{1} \le u'(x) \le c_{2}$ brings much difficulty and subtlety in finding the free boundaries that separates the action and non-action regions. Besides, this approach does not reveal how the ergodic, discounted and finite-time horizon problems (to be defined in \eqref{def: lambda_0}, \eqref{e-V_r(x) defn}, and \eqref{e-V_T(x) defn}, respectively) are related to each other.
	
In view of the above considerations,  we adapt  the vanishing discount  approach developed in \cite{MenaR-13} and \cite{Weerasinghe-07}.   First, we observe that  under Assumption \ref{assumption-pi_12},  
careful  analysis using reflected diffusion process on an appropriate interval $[a, b] $ reveals that $\lambda_{0} > 0$. We next use Assumptions \ref{assumption-V_r}  and \ref{assumption-mu-sigma}
    to show that $\lim_{r\downarrow 0} r V_{r}(x) = \lambda_{0}$ for any $x \ge 0$ and that there exist two positive constants $a_{*} < b_{*}$ for which the reflected diffusion process  on $[a_{*}, b_{*}]$ is an optimal state process, where $V_{r}(x)$ is  the value function of the discounted problem \eqref{e-V_r(x) defn}. These results are summarized in Theorem \ref{thm-lta-main result}.  Furthermore, we show in Theorem \ref{thm-cesaro-limit} that the Ces\`aro limit holds as well. As
      an illustration, we study an ergodic two-sided singular control problem for a geometric Brownian motion model using this framework in Section  \ref{sect-gbm}.  This approach fails when some of our assumptions are violated. Indeed, the case study  in subsection \ref{sec-mu=0} indicates that the long-term average reward can be arbitrarily large under suitable settings. 

Upon the completion of this paper, we learned the recent paper \cite{Alva-18}, which deals with an ergodic two-sided singular control problem for a general one-dimensional diffusion on $(-\infty, \infty)$.  The formulation in \cite{Alva-18} is different from ours because the rewards associated with the singular controls are both   positive. Under certain conditions, the paper first constructs a solution to the associated HJB equation  and then verifies that the local time reflection policy is optimal.   The approach is different from the vanishing discount method used in this paper, which also helps to establish the Abelian and Ces\`aro limits.

 Thanks  to  the referee who also brought our attention to the paper \cite{AlvaH-19},
which  
studies the optimal sustainable harvesting of a population that lives in a random environment. It proves that there exists a unique optimal  local time reflection harvesting strategy and establishes an Abelian limit under certain conditions. In contrast to the problem considered in this paper,
 \cite{AlvaH-19} is focused on
 a one-sided singular control problem without running rewards and  hence  it is simpler than our formulation in \eqref{def: lambda_0}.

Motivated by the two papers mentioned above, we add Section \ref{sect-direct-soln} to
 provide a direct solution approach to \eqref{def: lambda_0}.
  Following the idea in \cite{Alva-18} and \cite{AlvaH-19},  we first  impose conditions so that the long-term average reward for an $(a,b)$-reflection policy $\lambda(a, b)$ achieves its   maximum value  $\lambda_{*} = \lambda(a_{*}, b_{*})$ at a pair $0< a_{*} < b_{*}< \infty$. The maximizing pair $(a_{*}, b_{*})$  further allows us to derive a $C^{2}$ solution to the HJB equation \eqref{e:HJB-lta}.  This, together with the  verification  theorem (Theorem \ref{thm-verification-lta}), reveals that $\lambda_{0} = \lambda_{*}$ and the $(a_{*}, b_{*})$-reflection policy is optimal.

The rest of the paper is organized as follows.  Section \ref{sect-formulation} begins with the formulation of the problem;  it also presents    the main results of the paper.  Section \ref{sect-preliminaries}   collects some preliminary results. The Abelian and Ces\`aro limits are established in Sections \ref{sect-abelian limit} and \ref{sect-cesaro limit}, respectively. Section \ref{sect-gbm} is devoted to an ergodic  two-sided  singular control problem when the underlying process is a geometric Brownian motion with subsection \ref{sec-mu=0} providing a case study for $\mu =0$ and $h(x) = x^{p}, p\in (0,1)$.  Finally, we provide a direct solution approach in Section \ref{sect-direct-soln}. An example on ergodic  two-sided  singular control for Verhulst-Pearl diffusion  is studied in Section \ref{sect-direct-soln} for illustration.   	
 	
	\section{Formulation and Main Results}\label{sect-formulation}	
		To begin, suppose the uncontrolled process is given by the following one-dimensional diffusion process with state space $[0, \infty)$:
\begin{equation}\label{e:uncontrolled-SDE}
\d X_{0}(s)= \mu (X_{0}(s))\d s +\sigma (X_{0}(s))\d W(s), \quad X_{0}(0) = x_{0} \in [0, \infty),
\end{equation} where $W$ is a one-dimensional standard Brownian motion, and  $\mu$ and $\sigma $ are suitable functions so that a weak solution $X_{0}$ exists and is unique in the sense of probability law. We refer to Section 5.5 of \cite{Karatzas-S} for such conditions. Assume throughout the paper that  0 is an unattainable boundary point (entrance or natural) and that  $\infty$ is a  natural boundary point; see Chapter 15 of \cite{KarlinT81} or Section 5.5 of \cite{Karatzas-S} for more details on classifications of boundary points for one-dimensional diffusions. Note that if 0 is an entrance point, then it is part of the state space for $X_{0}$; otherwise, if 0 is a natural point, then it is not in the state space. While an entrance point cannot be reached from the interior, it is possible that the process $X_{0}$ will start from an entrance boundary point and quickly move to the interior and never return to it.  The choice of $[0, \infty)$ as the state space for $X_{0}$ is motivated by the following consideration.
The states of interest in  applications such as reversible investment, optimal harvesting and renewing, and dividend payment and capital injection problems are all bounded from below. For notational convenience, we then choose $[0, \infty)$ as the state space for $X_{0}$.

Throughout the paper,  we further suppose   that the process $X_{0}$ possesses   scale and speed densities given by
\begin{displaymath}
s(x) : = \exp\bigg\{-\int_{1}^{x} \frac{2\mu(y)}{\sigma^{2} (y) } \d y\bigg\}, \  \ \ m(x): = \frac{1}{\sigma^{2}(x) s(x)}, \qquad x > 0,
\end{displaymath} respectively. The scale and speed measures of $X_{0}$ are   $$  S[a, b]: = \int_{a}^{b} s(x) \d x, \quad\text{ and } \quad M[a, b]: = \int_{a}^{b} m(x) \d x, \quad \forall [a, b] \in [0, \infty).$$
  The infinitesimal generator of the process $X_{0}$ is 
	$$\LL f(x): = \frac12\sigma^{2}(x) f''(x) + \mu (x) f'(x)= \frac12 \frac{\d}{\d M}\bigg(\frac{\d f(x)}{\d S} \bigg), \ \quad \forall f\in C^{2}([0,\infty)).$$

We now  introduce a bounded variation process $\vphi = \xi-\eta$ to \eqref{e:uncontrolled-SDE}, resulting in the following  controlled dynamics:
\begin{equation}\label{e:controlled-sde}
		\begin{cases}\d X(s)= \mu (X(s))\d s +\sigma (X(s))\d W(s) +\d\xi(s) - \d\eta(s), \quad s\ge 0,\\
	X(0-)=x \ge 0. \end{cases}
	\end{equation} Throughout the paper, we assume that
	the  control process $\vphi(\cdot)= \xi(\cdot)- \eta(\cdot)$ is  an adapted, \cadlag process that admits the minimal Jordan decomposition $\vphi(t) = \xi (t) - \eta(t)$, $t\ge 0$.  In particular, $\xi, \eta$ are nonnegative and nondecreasing processes satisfying $\xi(0-) =\eta(0- )= 0$  such that the associated Borel measures  $\d\xi$ and $\d\eta$ on $[0,\infty)$ are mutually singular.     In addition, it is required that  under the control process $\vphi(\cdot)$,  \eqref{e:controlled-sde}  admits a unique nonnegative weak solution $X(\cdot)$.   Such a control process $\vphi(\cdot)$ is said to be {\em admissible}.
	
The   goal is to maximize the expected long-term average reward: 
	\begin{equation}
	\label{def: lambda_0}
	\lambda_{0} : =\sup_{\vphi(\cdot)\in \A_{x}} \liminf_{T\to\infty}\frac1T\E_{x}\bigg[\int_{0}^{T} h(X(s) ) \d s +c_{1} \eta(T)- c_{2}    \xi(T) \bigg],
	\end{equation} where $c_{1} < c_{2}$ are two positive constants, $h$ is a nonnegative function, 
	and  $\A_{x}$ is the set of {\em admissible controls}, i.e.,
	\begin{equation}\label{e:set A defn}
\begin{aligned}  
 \A_{x}: = \big\{\vphi  =(\xi,\eta) &\text{ is admissible and satisfies }   { \E_{x}[\xi(T) ] \le K_{1}(x) T^{n} + K_{2}(x) } \text{ for all } T >0\},
             \end{aligned}\end{equation} where $K_{ 1}(x) $ and $K_{2}(x)$ are positive real-valued functions,   and $n \in \N$ is a positive integer.
The requirement that $  \E_{x}[\xi(T) ] \le K_{1}(x) T^{n} + K_{2}(x) $ 
ensures that  
the expectation in the right-hand side of  \eqref{def: lambda_0} as well as the discounted and finite-time horizon problems \eqref{e-V_r(x) defn} and \eqref{e-V_T(x) defn} are well-defined.
   The set is clearly nonempty because the ``zero control'' $\vphi(t) \equiv 0 $ is in $\A_{x}$. Lemma \ref{lem:(LaLb)admissible} below  indicates that $\A_{x}$
   includes local time controls as well.  It is also apparent that the value of $\lambda_{0}$ does not depend on the initial condition $x$ since an initial jump dose not alter the value of the  limit in \eqref{def: lambda_0}. In addition, it is obvious that $\lambda_{0} \ge 0$.
	
	In this paper, we aim   to find the value $\lambda_{0}$ and an optimal control policy $\vphi^{*}$ that achieves the value $\lambda_{0}$. Motivated by \cite{Weerasinghe-07}, we will approach this problem via the vanishing discount method and  show that $\lambda_{0}$ is equal to the {\em Abelian limit} as well as the {\em Ces\`aro limit}. In other words, we
 demonstrate that   \begin{equation}
\label{e:abel-cesaro limits}
  \lambda_{0} = \lim_{r\downarrow 0} r V_{r}(x)= \lim_{T\to\infty} \frac{V_{T}(x)}{T},
\end{equation}  where $V_{r}(x)$ and $V_{T}(x)$ denote respectively  the value functions for the related discounted  and finite horizon problems \begin{align}
\label{e-V_r(x) defn}V_{r}(x) :& = \sup_{\vphi (\cdot)\in \A_{x}} \E_{x}\bigg[\int_{0}^{\infty} e^{-rs}[ h(X(s)) \d s + c_{1} \d \eta(s) - c_{2}\d \xi(s)]\bigg],\\
\label{e-V_T(x) defn}V_{T}(x) :& = \sup_{\vphi(\cdot)\in \A_{x}}\E_{x}\bigg[\int_{0}^{T} h(X(s) ) \d s +c_{1} \eta(T)- c_{2}    \xi(T) \bigg].
\end{align}  	
	The main result of this paper is  given next.


\begin{theorem}\label{thm-main result}
Under Assumptions \ref{assumption-pi_12}, \ref{assumption-V_r},  and  \ref{assumption-mu-sigma},    
the following assertions hold:
\begin{itemize}
 \item[(i)] There exist $0 < a_{*} < b_{*} < \infty$ so that the reflected diffusion    process on the  interval  $[a_{*},  b_{*}]$ $($if the initial point is outside this interval, then there will be an initial jump to the nearest point of the interval$)$ is an optimal state process for the ergodic control problem \eqref{def: lambda_0}. Hence the optimal control policy
is given by $\vphi_{*} = L_{a_{*}} - L_{b_{*}}$, in which $L_{a_{*}} $ and $L_{b_{*}}$ denote the local time processes at $a_{*}$ and $b_{*}$, respectively.
  \item[(ii)] The Abelian and Ces\`aro limits in \eqref{e:abel-cesaro limits} hold.
   \end{itemize}
\end{theorem}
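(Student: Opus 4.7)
The plan is to execute the vanishing discount scheme already hinted at in the introduction. First I would reduce the ergodic problem to the parameter family of discounted problems \eqref{e-V_r(x) defn}. Under Assumption \ref{assumption-V_r}, $V_r$ is finite and, by the standard HJB analysis for singular control with two-sided gradient constraints, is a $C^2$ solution of the quasi-variational inequality
\begin{equation*}
\max\bigl\{\LL V_r(x) - r V_r(x) + h(x),\; c_1 - V_r'(x),\; V_r'(x) - c_2\bigr\} = 0, \quad x \ge 0.
\end{equation*}
This splits the half-line into a non-action region and two action regions, and produces free boundaries $0 < a_r < b_r$ on which $V_r'(a_r)=c_2$ and $V_r'(b_r)=c_1$; here the optimal $r$-discounted control is an $(a_r,b_r)$-reflection policy, by a direct verification argument using Itô's formula together with the gradient constraint and the Skorokhod decomposition of $\varphi$.

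Second, I would implement the vanishing discount limit $r \downarrow 0$. Fix any reference point $x_0 > 0$ and consider the normalized value $U_r(x) := V_r(x) - V_r(x_0)$. Because $c_1 \le V_r' \le c_2$, the family $\{U_r\}$ is equi-Lipschitz, hence precompact on compact subsets of $[0,\infty)$. Using the assumed positivity $\lambda_0 > 0$ (obtainable by plugging in any reflecting policy on a fixed $[a,b]$ as mentioned in the introduction) and the upper bound on $rV_r(x_0)$ coming from any admissible comparison policy, one extracts along a subsequence $r_n \downarrow 0$ a constant $\bar\lambda \ge \lambda_0$ and a $C^2$ function $u$ such that $r_n V_{r_n}(x_0) \to \bar\lambda$, $U_{r_n} \to u$ locally uniformly, and the pair $(u,\bar\lambda)$ solves the ergodic HJB equation
\begin{equation*}
\max\bigl\{\LL u(x) + h(x) - \bar\lambda,\; c_1 - u'(x),\; u'(x) - c_2\bigr\} = 0.
\end{equation*}
The crucial step, and the one I expect to be the main obstacle, is showing that the discounted free boundaries $a_r, b_r$ stay in a fixed compact subinterval of $(0,\infty)$ as $r \downarrow 0$ and converge to positive finite limits $a_* < b_*$. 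This requires using Assumption \ref{assumption-mu-sigma} together with fine estimates of the scale and speed measures and the running reward $h$ near $0$ and near $\infty$, so that neither boundary escapes to $0$ or to $\infty$ in the limit; this is where the mixed-sign structure of the problem (unlike the symmetric case of \cite{Kara-83, Weera-02}) makes the analysis delicate.

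Third, for the verification and assertion (i), I apply Itô's formula to $u$ along the trajectory of any admissible $\varphi = \xi - \eta$, using the identities $u'(a_*) = c_2$ and $u'(b_*) = c_1$ and the gradient constraint $c_1 \le u' \le c_2$ to bound
\begin{equation*}
\E_x\!\left[\int_0^T h(X(s))\,ds + c_1 \eta(T) - c_2 \xi(T)\right] \le \bar\lambda T + u(x) - \E_x[u(X(T))].
\end{equation*}
Dividing by $T$, taking $T \to \infty$, and controlling the boundary term $\E_x[u(X(T))]/T$ via the Lipschitz bound on $u$ and the admissibility growth condition $\E_x[\xi(T)] \le K_1(x)T^n + K_2(x)$ yields $\lambda_0 \le \bar\lambda$. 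For the $(a_*, b_*)$-reflection policy $\varphi_* = L_{a_*} - L_{b_*}$, both gradient constraints are saturated on the support of $d\xi_*$ and $d\eta_*$, so Itô's formula gives equality; combined with the ergodic theorem for the reflected diffusion on $[a_*,b_*]$ (which has a unique invariant measure determined by the speed measure restricted to $[a_*,b_*]$), this shows the long-run average reward of $\varphi_*$ equals $\bar\lambda$. Hence $\lambda_0 = \bar\lambda$, the limit does not depend on the subsequence, $\lim_{r\downarrow 0} r V_r(x) = \lambda_0$, and $\varphi_*$ is optimal.

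Finally, for the Cesàro half of assertion (ii), I would feed the same pair $(u, \lambda_0)$ into Itô's formula applied to $u$ under the finite-horizon reward, obtaining $V_T(x) \le \lambda_0 T + u(x) + O(1)$ for an upper bound, and the matching lower bound by applying $\varphi_*$ on $[0,T]$ and using $\E_x[u(X_*(T))] = O(1)$ by stability of the reflected diffusion. Dividing by $T$ and letting $T\to\infty$ gives $V_T(x)/T \to \lambda_0$, completing \eqref{e:abel-cesaro limits}.
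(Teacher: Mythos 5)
You have the right overall architecture --- the vanishing-discount scheme is exactly what the paper uses --- but your proposal leaves unproven precisely the step that carries the weight of the argument, and you say so yourself: the claim that the discounted free boundaries $a_r, b_r$ stay in a fixed compact subset of $(0,\infty)$ as $r\downarrow 0$. Without this, no subsequential limit $(u,\bar\lambda,a_*,b_*)$ with $0<a_*<b_*<\infty$ can be extracted and the whole construction collapses. The paper's resolution is not a matter of ``fine estimates of the scale and speed measures near $0$ and $\infty$''; it is a short argument from the smooth-pasting identities at the free boundaries. Since $V_r'(a_r)=c_2$, $V_r''(a_r)=0$ and $V_r(x)=c_2x+V_r(0+)$ for $x\le a_r$, the discounted equation at $a_r$ gives $rV_r(0+)=h(a_r)+c_2\mu(a_r)-rc_2a_r$; similarly at $b_r$ one gets $0<rV_r(b_r)=h(b_r)+c_1\mu(b_r)$. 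If $a_{r_n}\to 0$ the first identity forces $r_nV_{r_n}(0+)\to\lim_{x\downarrow 0}[h(x)+c_2\mu(x)]\le 0$ by Assumption \ref{assumption-pi_12}(i), contradicting the facts that $\lambda_0>0$ (Corollary \ref{cor-lambda_0>0}) and that every limit point of $rV_r(0+)$ is at least $\lambda_0$ (Proposition \ref{prop1:Abel-limit}); if $b_{r_n}\to\infty$ the second identity contradicts $\lim_{x\to\infty}[h(x)+c_1\mu(x)]<0$. Note that this argument itself rests on the Tauberian-type inequality $\liminf_{r\downarrow 0}rV_r(x)\ge\lambda_0$, which you also take for granted (``a constant $\bar\lambda\ge\lambda_0$'') but which requires its own proof via the dynamic programming principle and an integration by parts (Proposition \ref{prop1:Abel-limit}).

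The remaining pieces of your plan are workable and in places diverge usefully from the paper. You pass the normalized values $U_r=V_r-V_r(x_0)$ to the limit by equi-Lipschitz compactness, while the paper works with $w_r=V_r'$ and applies Arzel\`a--Ascoli to $\{w_r,w_r'\}$ (Lemmas \ref{Bounded Boundaries} and \ref{lem-w_0}); these are equivalent. More substantively, the paper never verifies the full HJB inequality $\LL u+h-\lambda\le 0$ outside $(a_*,b_*)$ for the limit function: it only needs the ODE inside $[a_*,b_*]$ plus the derivative boundary conditions to compute the reward of the $(a_*,b_*)$-reflection policy, and it closes the loop using $l_0\ge\lambda_0$ from Proposition \ref{prop1:Abel-limit} rather than from Theorem \ref{thm-verification-lta}. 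Your route through the verification theorem requires the extra step of passing the discounted inequalities $rV_r-\LL V_r-h\ge 0$ on the action regions to the limit, which is doable but should be written out. Your Ces\`aro upper bound ($V_T(x)\le\lambda_0 T+u(x)+O(1)$ directly from the ergodic HJB solution) would then be genuinely simpler than the paper's Step 2, which instead applies It\^o's formula to $e^{-rT}V_r(X(T))$ and takes $r=\delta/T$ --- but it is only available once the full HJB inequality for $(u,\lambda_0)$ is established.
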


 The proof of
Theorem \ref{thm-main result} follows from Theorems \ref{thm-lta-main result} and \ref{thm-cesaro-limit},  which will be presented in the subsequent sections.

 \begin{rem}{\rm
Since the optimal state process is a reflected diffusion on the interval $[a_{*},  b_{*}]\subset (0, \infty)$, it follows that it possesses a unique   invariant measure $\pi$. According to Chapter 15 of \cite{KarlinT81}, the   invariant measure is   $\pi(\d x) = \frac{1}{M[a_{*}, b_{*}]} M( \d x)$.
  Moreover, using the ergodicity
for linear diffusions in Chapter II, section 6 of \cite{boro:02}, we have $$\lambda_{0} = \int_{a_{*}}^{b_{*}} h(x) \pi(\d x)+ \frac{c_{1}}{2M[a_{*}, b_{*}] s(b_{*})} -\frac{c_{2}}{2M[a_{*}, b_{*}] s(a_{*})} .$$
}\end{rem}

\section{Preliminary Results}\label{sect-preliminaries}
In this section, we  provide some preliminary results.

	\begin{theorem}\label{thm-verification-lta}
		Suppose there exists a nonnegative function $u\in C^{2}([0,\infty))$ and a nonnegative number $\lambda$ such that
		\begin{equation}
		\label{e:HJB-lta}
		\max\big\{   \LL u( x) + h(x)-\lambda, \ u' (x) - c_{2}, \  -u' (x) + c_{1} \big \} = 0.
		\end{equation}
		Then $\lambda_{0} \le \lambda$.
	\end{theorem}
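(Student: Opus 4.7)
The plan is to apply a generalized It\^o formula to $u(X(\cdot))$ for an arbitrary admissible control $\vphi = \xi-\eta \in \A_x$, use each of the three terms in \eqref{e:HJB-lta} to dominate the corresponding contribution, and then pass to the long-run average after taking expectation.

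First, decompose $\xi = \xi^c + \xi^d$ and $\eta = \eta^c + \eta^d$ into continuous and purely jump parts. Since $\d \xi$ and $\d\eta$ are mutually singular, the jumps of $X$ occur at the jump times of $\xi$ or of $\eta$ but not both, with $\Delta X(s)=\Delta\xi(s)$ or $-\Delta\eta(s)$. It\^o's formula then gives
\begin{equation*}
u(X(T)) = u(x) + \int_0^T \LL u(X(s))\,\d s + \int_0^T \sigma(X(s))u'(X(s))\,\d W(s) + \int_0^T u'(X(s-))\,\d \xi^c(s) - \int_0^T u'(X(s-))\,\d \eta^c(s) + J(T),
\end{equation*}
where $J(T)=\sum_{0 \le s \le T}[u(X(s))-u(X(s-))]$ collects the jump increments, each of which can be written via the fundamental theorem of calculus as $\int_0^{\Delta \xi(s)} u'(X(s-)+t)\,\d t$ or $-\int_0^{\Delta\eta(s)} u'(X(s-)-t)\,\d t$, respectively.

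Next, invoke the three inequalities contained in \eqref{e:HJB-lta}: $\LL u(y) + h(y) \le \lambda$, $u'(y)\le c_2$, and $-u'(y)\le -c_1$ for every $y \ge 0$. Applying these termwise yields
\begin{equation*}
\int_0^T u'(X(s-))\,\d \xi(s) \le c_2\, \xi(T), \qquad -\int_0^T u'(X(s-))\,\d\eta(s) \le -c_1\, \eta(T),
\end{equation*}
(both continuous and jump pieces being handled by the same constants), and $\int_0^T \LL u(X(s))\,\d s \le \lambda T - \int_0^T h(X(s))\,\d s$. Rearranging, I obtain the pointwise bound
\begin{equation*}
\int_0^T h(X(s))\,\d s + c_1\eta(T) - c_2\xi(T) \le u(x) - u(X(T)) + \lambda T + \int_0^T \sigma(X(s))u'(X(s))\,\d W(s).
\end{equation*}

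Finally, localize the stochastic integral by a sequence of stopping times $\tau_n \uparrow \infty$ (for instance, exits from expanding compacts), take expectations to kill the martingale piece on $[0,T\wedge\tau_n]$, pass $n\to \infty$ using monotone/dominated convergence together with the polynomial-growth bound $\E_x[\xi(T)]\le K_1(x)T^n+K_2(x)$ built into $\A_x$ (which controls the $c_2\xi(T)$ term and, via an elementary argument comparing $\eta(T)$ to $\xi(T)$ through the dynamics of $X\ge 0$, also ensures integrability of $c_1\eta(T)$), and discard the term $-\E_x[u(X(T))]\le 0$ by nonnegativity of $u$. This gives
\begin{equation*}
\E_x\!\left[\int_0^T h(X(s))\,\d s + c_1\eta(T) - c_2\xi(T)\right] \le u(x) + \lambda T.
\end{equation*}
Dividing by $T$, taking $\liminf_{T\to\infty}$, and then the supremum over $\vphi \in \A_x$ yields $\lambda_0 \le \lambda$.

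The main technical obstacle is justifying the expectation step cleanly: the stochastic integral need not be a true martingale a priori, and the jump contributions must be handled through the fundamental-theorem-of-calculus representation rather than a direct gradient bound. The localization, combined with the admissibility growth condition on $\E_x[\xi(T)]$, is precisely what is needed to push the inequality past the limit, and the nonnegativity of $u$ is what allows the boundary term $u(X(T))$ to be dropped without further assumptions on the long-run behavior of $X$.
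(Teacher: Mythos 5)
Your proposal is correct and follows essentially the same route as the paper: It\^o's formula applied to $u(X(\cdot))$ with localization at the exit times from $[0,n)$, the gradient bounds $c_{1}\le u'\le c_{2}$ used to dominate both the continuous and jump contributions of $\xi$ and $\eta$ (your fundamental-theorem-of-calculus representation of the jumps is equivalent to the paper's mean value theorem step), the inequality $\LL u + h \le \lambda$ for the drift, nonnegativity of $u$ to discard the terminal term, and monotone convergence together with the admissibility growth bound to pass to the limit before dividing by $T$. No substantive differences.
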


	\begin{proof}	Let $x\in [0,\infty)$ and $\vphi(\cdot)=(\xi\cd,\eta\cd)\in \A_{x}$ be an arbitrary control policy and denote by $X$ the controlled state process with $X(0-)=x$. For $n\in \N$, let $\beta_{n}: = \inf\{t\ge 0: X(t) \ge n\}$.  By Ito's formula we have
		\begin{align}\label{e0:verification-proof-lta}
		\nonumber	u(X(T\wedge \beta_{n})) &= u(x) + \int_{0}^{T\wedge \beta_{n}} \LL u(X(s))\d s - \int_{0}^{T\wedge \beta_{n}}u'(X(s-)) (\d\eta_{s}^{c} - \d\xi_{s}^{c} )  
\\
& \quad + \int_{0}^{T\wedge \beta_{n}}u'(X(s))\sigma (X(s))\d W(s) +\sum_{0\leq s \leq T\wedge \beta_{n}}\left[u(X(s)) - u(X(s-))\right].
		\end{align}
		The HJB equation \eqref{e:HJB-lta} implies that $c_{1} \le  u' (x) \le c_{2}$. Note also that $\Delta X(s)  = \Delta \xi(s) - \Delta \eta(s)$.
		Consequently, we can use  the mean value theorem to obtain \begin{equation}\label{e1:verification-proof-lta}
		u( X(s)) - u( X(s-)) \le c_{2}\Delta \xi(s) - c_{1}\Delta \eta(s).
		\end{equation} Note that \eqref{e:HJB-lta} also implies that $\LL u(x) \le -h(x) + \lambda$. Plugging this
 and \eqref{e1:verification-proof-lta} into \eqref{e0:verification-proof-lta} gives us
		\begin{align*}
		u(X(T\wedge \beta_{n})) &  \le  u(x) - \int_{0}^{T\wedge \beta_{n}}  h(X(s))\d s + \lambda (T\wedge \beta_{n})  \\& \quad  + \int_{0}^{T\wedge \beta_{n}}u'(X(s))\sigma (X(s))\d W(s) - c_{1}\eta(T\wedge \beta_{n}) + c_{2} \xi(T\wedge \beta_{n}).
		\end{align*} Rearranging
terms and then taking expectations, we arrive at
		\begin{displaymath}
		\E_{x}\bigg[ \int_{0}^{T\wedge \beta_{n}}  h(X(s))\d s +  c_{1}\eta(T\wedge \beta_{n}) - c_{2} \xi(T\wedge \beta_{n}) \bigg] + \E_{x}[u(X(T\wedge \beta_{n}))] \le  u(x) + \lambda \E_{x}[T\wedge \beta_{n}].
		\end{displaymath} Passing to the limit as $n\to\infty$ and then dividing both sides by $T$, we obtain from the nonnegativity of $u$ and the monotone convergence theorem that
		\begin{displaymath}
		\limsup_{T\to\infty}\frac1T \E_{x}\bigg[ \int_{0}^{T}  h(X(s))\d s +  c_{1}\eta(T) - c_{2} \xi(T) \bigg] \le \lambda.
		\end{displaymath} Finally, taking supremum over $\vphi\cd$ yields the assertion that $\lambda_{0} \le \lambda$.
	\end{proof}

To proceed, we make the following assumption.

\begin{assumption}\label{assumption-pi_12}
\begin{itemize}
  \item[(i)]    $\lim_{x\downarrow 0} [h(x) + c_{2} \mu(x)] \le  0$ and $\lim_{x\to \infty} [h(x) + c_{1} \mu(x)]  <   0$.

 \item[(ii)]  There exist $0 < a < b < \infty$ satisfying  \begin{equation}
\label{e:lam>0condition}
  \int_{a}^{b}h(y) m(y) \d y + \frac{c_{1}}{2s(b)} -  \frac{c_{2}}{2s(a)} > 0.
\end{equation}
\end{itemize}
\end{assumption}

 \comment{ \item[(ii)] There exists $\hat x_{i} > 0$ such that $\pi_{i} (x)$ is strictly  increasing on $(0, \hat x_{i})$ and strictly decreasing on $[\hat x_{i}, \infty) $. In addition $\lim_{x\to \infty} \pi_{1}(x) < 0$. Consequently, there exists a $b_{0} > \hat x_{1} $ such that $\pi_{1} (b_{0}) =0$.
  \item[(iii)] It holds true that
  \begin{equation}
\label{e:key-condition}
\int_{0}^{b_{0}} h(x) m(x) \d x - \frac{c_{2}}{ 2 s(0)} + \frac{c_{1}}{2 s(b_{0})}  > 0.
\end{equation}}

\begin{lemma}\label{lem:bvp neuman boundary}
Suppose Assumption \ref{assumption-pi_12} holds. Then there exists  
a positive number $\lambda= \lambda(a, b)$ so that the following boundary value problem has a solution:
\begin{equation}
\label{e:bvp-(a,b)}
\begin{cases}
   \frac12 \sigma^{2}(x) u''(x) + \mu (x) u'(x) + h(x) = \lambda, \quad x\in (a, b),\\
    u'(a) = c_{2}, \quad u'(b) = c_{1}.
    \end{cases}
\end{equation}
\end{lemma}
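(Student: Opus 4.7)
The plan is to reduce the second–order boundary value problem to an explicit quadrature, using the scale and speed representation of $\LL$, and to read off both the constant $\lambda$ and the solution $u$ from the two Neumann boundary conditions.

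Recall $\LL u (x)=\tfrac12 \tfrac{\d}{\d M}\bigl(\tfrac{\d u}{\d S}\bigr)$, where $\tfrac{\d u}{\d S}(x)=u'(x)/s(x)$. Rewriting \eqref{e:bvp-(a,b)} as $\tfrac{\d}{\d M}\bigl(\tfrac{\d u}{\d S}\bigr)=2(\lambda-h(x))$ and integrating with respect to $M$ from $a$ to $x$, I obtain
\begin{equation*}
\frac{u'(x)}{s(x)}=\frac{u'(a)}{s(a)}+2\int_{a}^{x}(\lambda-h(y))\,m(y)\,\d y.
\end{equation*}
Imposing the left boundary condition $u'(a)=c_{2}$ fixes the first term, and imposing the right boundary condition $u'(b)=c_{1}$ at $x=b$ yields a single scalar equation for $\lambda$:
\begin{equation*}
\frac{c_{1}}{s(b)}-\frac{c_{2}}{s(a)}=2\lambda M[a,b]-2\int_{a}^{b}h(y)m(y)\,\d y.
\end{equation*}
Solving this for $\lambda$ produces the unique candidate
\begin{equation*}
\lambda=\lambda(a,b)=\frac{1}{M[a,b]}\Bigl(\int_{a}^{b}h(y)m(y)\,\d y+\frac{c_{1}}{2s(b)}-\frac{c_{2}}{2s(a)}\Bigr),
\end{equation*}
and Assumption \ref{assumption-pi_12}(ii) is exactly what guarantees that this number is strictly positive.

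With this value of $\lambda$, I define
\begin{equation*}
u'(x):=s(x)\Bigl[\frac{c_{2}}{s(a)}+2\int_{a}^{x}(\lambda-h(y))m(y)\,\d y\Bigr],\qquad u(x):=\int_{a}^{x}u'(y)\,\d y,
\end{equation*}
and verify directly that $u'\in C^{1}([a,b])$, that $u$ satisfies the ODE (differentiate the definition of $u'$ and use $\LL u=\tfrac12\tfrac{\d}{\d M}(\tfrac{\d u}{\d S})$), and that both boundary conditions hold — the left one by construction and the right one precisely because $\lambda$ was chosen to balance the identity above. This produces the required $C^{2}([a,b])$ solution $u$ (unique up to an additive constant).

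The only subtle point is the smoothness and integrability of the building blocks $s$ and $m$ on the \emph{interior} interval $[a,b]\subset(0,\infty)$, which follows from the standing regularity hypotheses on $\mu,\sigma$ near positive points; since $a>0$ and $b<\infty$, neither boundary-point classification issue at $0$ or $\infty$ interferes. Thus no real obstacle arises and the construction is complete.
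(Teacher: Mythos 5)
Your proposal is correct and follows essentially the same route as the paper: reduce the ODE to a quadrature via the scale/speed form of $\LL$, use the two Neumann conditions to pin down $\lambda(a,b) = \frac{1}{2M[a,b]}\bigl(\frac{c_1}{s(b)} - \frac{c_2}{s(a)} + 2\int_a^b h\,m\,\d y\bigr)$, and invoke Assumption \ref{assumption-pi_12}(ii) for positivity. The only cosmetic difference is that you anchor the first integration at $a$ (so $u'(a)=c_2$ holds by construction and $u'(b)=c_1$ determines $\lambda$), whereas the paper anchors at $b$ and uses the identity \eqref{e:convenient_identity} to eliminate the $\mu$ term; the resulting $\lambda$ is identical.
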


\begin{proof} 
 For any  $0< a < b<\infty$ and $\lambda \in \R$ given, a  solution to  the differential equation $ \frac12 \sigma^{2}(x) u''(x) + \mu (x) u'(x) + h(x) = \lambda $ is given by \begin{equation}\label{e:u-(a, b)(x)}
u(x) =   c_{1} x+ \int_{a}^{x} 2s(u) \int_{u}^{b}[c_{1} \mu(y) + h(y) - \lambda] m(y) \d y\d u, \quad  x\in [a, b].
\end{equation}  Note that $u'(b) = c_{1}$.  The  other boundary condition $u'(a) = c_{2}$ gives \begin{displaymath}
c_{1}  +2 s(a) \int_{a}^{b} [c_{1} \mu( y) + h(y) - \lambda ] m(y) \d y =c_{2}.
\end{displaymath} Note that
\begin{equation}
\label{e:convenient_identity}
\int_{a}^{b} \mu(y) m(y) \d y = \frac{1}{2s(b)} -  \frac{1}{2s(a)}, \quad \forall [a, b] \in (0, \infty).
\end{equation}
Hence it follows that
\begin{equation}\label{e1:lambda}
\lambda =\lambda(a, b) = \frac{1}{2 M[a, b]} \bigg(\frac{c_{1}}{s(b)} -  \frac{c_{2}}{s(a)} + 2 \int_{a}^{b}h(y) m(y) \d y \bigg),
\end{equation} where $M[a, b] = \int_{a}^{b} m(y) \d y > 0$ is the speed measure of the interval $[a, b]$.  In particular, it follows from \eqref{e:lam>0condition} that $\lambda$ is  positive.
\comment{ if we can choose $0< a< b< \infty$ so that $\frac{c_{1}}{s(b)} -  \frac{c_{2}}{s(a)} + 2 \int_{a}^{b}h(y) m(y) \d y  > 0$. To this end, we first observe that condition \eqref{e:h-Inada} implies that there exists a positive constant $\delta$ so that $\frac{h(y)}{y} > -2\mu c_{2}$ for all $y\in (0, \delta)$. Let $b > \delta > a$. Then  we have \begin{align*}
\frac{c_{1}}{s(b)} -  \frac{c_{2}}{s(a)} + 2 \int_{a}^{b}h(y) m(y) \d y   & \ge  -  \frac{c_{2}}{s(a)} + 2 \int_{a}^{\delta}\frac{h(y)}{y} y \frac{1}{\sigma^{2}}y^{\frac{2\mu}{\sigma^{2}}-2} \d y    \\
  & >  -  \frac{c_{2}}{s(a)} + \frac{2}{\sigma^{2}}  \int_{a}^{\delta}( -2\mu c_{2})  y^{\frac{2\mu}{\sigma^{2}}-1} \d y\\
  & =  -  \frac{c_{2}}{s(a)} + 2c_{2} \bigg( \frac{1}{s(a)} -\frac{1}{s(\delta)}\bigg) \\
  & =  \frac{c_{2}}{s(a)}- \frac{2c_{2}}{s(\delta)}.
\end{align*}\footnote{Since $2\int_{a}^{b} \mu(y)m(y)\d y= \frac{1}{s(b)} - \frac{1}{s(a)}$, we should compare $2 \int_{a}^{b}h(y) m(y) \d y$ to $2\int_{a}^{b} \mu(y)m(y)\d y$.}
Finally, we choose $a > 0$ sufficiently small so that $\frac{1}{s(a)} > \frac{2}{s(\delta)}$. For all  such choices of $a$ and $b$,  $\lambda = \lambda(a, b) > 0$. Moreover, the function $u$ of \eqref{e:u-(a, b)(x)} in which the constant $\lambda$  given by \eqref{e1:lambda}  is a solution to \eqref{e:bvp-(a,b)}. The proof is complete.}
\end{proof}

\comment{\begin{proof}
For any  $0< a < b<\infty$ and $\lambda \in \R$ given, the solution to  the differential equation $ \frac12 \sigma^{2}x^{2} u''(x) + \mu x u'(x) + h(x) = \lambda $ is given by \begin{equation}\label{e:u-(a, b)(x)}
u(x) = \frac{2}{(\sigma^{2}(1-p) -2\mu)p} x^{p} - \frac{2\lambda}{\sigma^{2} -2\mu} \log x + K x^{1-\frac{2\mu}{\sigma^{2}}},
\end{equation}
where $K$ is some constant. We now match the boundary conditions of \eqref{e:bvp-(a,b)}:
\begin{align*}
u'(a)  & =    \frac{2}{\sigma^{2}(1-p) -2\mu} a^{p-1} - \frac{2\lambda}{a(\sigma^{2} -2\mu)}  + K a^{-\frac{2\mu}{\sigma^{2}}} =c_{2},  \\
 u'(b)  & =    \frac{2}{\sigma^{2}(1-p) -2\mu} b^{p-1} - \frac{2\lambda}{b(\sigma^{2} -2\mu)}  + K b^{-\frac{2\mu}{\sigma^{2}}} =c_{1}.
\end{align*}
Solving these equations for $\lambda$ and $K$, we obtain
\begin{align}\label{e1:lambda}
\lambda&= \frac{\sigma^{2} -2\mu}{2} \cdot\frac{c_{2} b^{-\frac{2\mu}{\sigma^{2}}} -c_{1} a^{-\frac{2\mu}{\sigma^{2}}}- \frac{2}{\sigma^{2}(1-p)  -2\mu} (a^{p-1} b^{-\frac{2\mu}{\sigma^{2}}}- a^{-\frac{2\mu}{\sigma^{2}}}b^{p-1} )}{a^{-\frac{2\mu}{\sigma^{2}}}b^{-1} -a^{-1} b^{-\frac{2\mu}{\sigma^{2}}}},\end{align} and \begin{align}\label{e2:K}
K&= 
 \frac{c_{2}b^{-1} -c_{1}a^{-1} -\frac{2(a^{p-1}b^{-1} -a^{-1} b^{p-1})}{\sigma^{2}(1-p) -2\mu}}{a^{-\frac{2\mu}{\sigma^{2}}}b^{-1} -a^{-1} b^{-\frac{2\mu}{\sigma^{2}}}}.
\end{align} Since $\mu < 0$, it follows that $a^{-\frac{2\mu}{\sigma^{2}}}b^{-1} -a^{-1} b^{-\frac{2\mu}{\sigma^{2}}} < 0$ for any $0<a < b< \infty$. Thus $\lambda$ will be positive if we can choose $0< a< b< \infty$ so that $$ c_{2} b^{-\frac{2\mu}{\sigma^{2}}} -c_{1} a^{-\frac{2\mu}{\sigma^{2}}}- \frac{2}{\sigma^{2}(1-p)  -2\mu} (a^{p-1} b^{-\frac{2\mu}{\sigma^{2}}}- a^{-\frac{2\mu}{\sigma^{2}}}b^{p-1} ) < 0. $$ The above inequality can be rewritten as \begin{equation}\label{e:inequality-lambda}
c_{2} <  \frac{2}{\sigma^{2}(1-p)  -2\mu} a^{p-1} + a^{{-\frac{2\mu}{\sigma^{2}}}} b^{\frac{2\mu}{\sigma^{2}}} \bigg( c_{1} - \frac{2}{\sigma^{2}(1-p)  -2\mu} b^{p-1} \bigg).
\end{equation} Recall that $p \in (0,1)$. Thus we  can pick $a> 0$ sufficiently small so that $c_{2} <  \frac{2}{\sigma^{2}(1-p)  -2\mu} a^{p-1}$  and then  $b > a$ sufficiently large so that  $ c_{1} > \frac{2}{\sigma^{2}(1-p)  -2\mu} b^{p-1}$. For all such choices of $a$ and $b$, \eqref{e:inequality-lambda} is satisfied and hence $\lambda = \lambda(a, b) > 0$. Moreover, the function $u$ of \eqref{e:u-(a, b)(x)} in which the constants $\lambda$ and $K$ given by \eqref{e1:lambda} and \eqref{e2:K} respectively is a solution to \eqref{e:bvp-(a,b)}. The proof is complete.
\end{proof}}

\begin{lemma}\label{lem:(LaLb)admissible}
For any $0 < a < b < \infty$,   let $X$  be the reflected diffusion process  on $[a, b]$:
\begin{equation}\label{e:reflected diffusion:a,b}
X(t) = x + \int_{0}^{t}\mu (X(s)) \d s + \int_{0}^{t}\sigma( X(s)) \d W(s) + L_{a}(t) - L_{b}(t),
\end{equation} where without loss of generality we can assume that the initial condition $x\in [a,b]$, and $L_{a}$ and $L_{b}$ denote the local time processes at $a$ and $b$, respectively.  Then there exist some positive constants $K_{1}$ and $K_{2}$ so that
\begin{displaymath}
\E_{x} [L_{a}(t) + L_{b}(t) ] \le K_{1} t + K_{2}, \quad \text{ for any } t \ge 0.
\end{displaymath} In particular, the policy $L_{a}- L_{b} \in \A_{x}$.
\end{lemma}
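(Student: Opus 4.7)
The plan is to apply It\^o's formula to $f(X(t))$ for a judiciously chosen $C^2$ test function $f:[a,b]\to\R$ whose boundary derivatives satisfy $f'(a)=1$ and $f'(b)=-1$. A simple explicit choice is the polynomial $f(x) = (x-a) - (x-a)^2/(b-a)$, which gives the required boundary values immediately and is smooth on all of $\R$. Applying It\^o's formula to the reflected diffusion \eqref{e:reflected diffusion:a,b} and using the fact that the local time measures $\d L_a$ and $\d L_b$ are supported on $\{X=a\}$ and $\{X=b\}$ respectively, I expect to arrive at
\begin{displaymath}
f(X(t)) = f(x) + \int_0^t \LL f(X(s))\, \d s + \int_0^t f'(X(s)) \sigma(X(s))\, \d W(s) + L_a(t) + L_b(t),
\end{displaymath}
since $\int_0^t f'(X(s))\,\d L_a(s) = f'(a) L_a(t) = L_a(t)$ and $-\int_0^t f'(X(s))\,\d L_b(s) = -f'(b) L_b(t) = L_b(t)$.

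Rearranging and taking expectations (the stochastic integral is a true martingale because $f'\sigma$ is continuous, hence bounded on the compact interval $[a,b]$ in which $X$ remains) then yields
\begin{displaymath}
\E_x[L_a(t) + L_b(t)] = \E_x[f(X(t))] - f(x) - \E_x\bigg[\int_0^t \LL f(X(s))\, \d s\bigg].
\end{displaymath}
Because $f$ and $\LL f$ are continuous on the compact interval $[a,b]$, they admit uniform bounds $|f|\le C$ and $|\LL f|\le M$ there, so the right-hand side is dominated by $2C + Mt$. This is exactly the claimed estimate with $K_1 = M$ and $K_2 = 2C$.

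For the admissibility conclusion, set $\xi := L_a$ and $\eta := L_b$, so that $\vphi = \xi - \eta = L_a - L_b$. Both $\xi$ and $\eta$ are adapted, continuous, nondecreasing processes starting at $0$, and the measures $\d\xi$ and $\d\eta$ are mutually singular because they are supported on the disjoint level sets $\{X=a\}$ and $\{X=b\}$. The preceding bound gives $\E_x[\xi(T)] \le \E_x[L_a(T) + L_b(T)] \le K_1 T + K_2$, which is exactly the growth requirement in \eqref{e:set A defn} with $n=1$, so $\vphi\in\A_x$.

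I do not anticipate a serious obstacle. The only point that requires some care is the identification of $\int_0^t f'(X(s))\,\d L_a(s)$ with $f'(a) L_a(t)$, which relies on the defining property of the Skorokhod reflection that $\d L_a$ is supported on $\{X=a\}$ (and analogously for $L_b$); this is standard for reflected one-dimensional diffusions but should be invoked explicitly. Everything else reduces to a compactness argument on the bounded interval $[a,b]$ and does not require any of the paper's structural assumptions on $\mu$, $\sigma$, or $h$.
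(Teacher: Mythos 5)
Your proof is correct and follows essentially the same route as the paper: apply It\^o's formula to a $C^2$ test function with $f'(a)=1$ and $f'(b)=-1$, use the support properties of the local time measures to extract $L_a(t)+L_b(t)$, and bound the remaining terms by compactness of $[a,b]$. The only difference is cosmetic --- the paper constructs its test function by solving the Neumann problem $\LL v = -K_1$ via the scale and speed densities (which makes the drift term exactly linear in $t$ and yields an explicit constant $K_1 = \frac{1}{2M[a,b]}(\frac{1}{s(a)}+\frac{1}{s(b)})$), whereas your explicit polynomial achieves the same estimate more elementarily by taking the sup-norm of $\LL f$ on $[a,b]$.
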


\begin{proof}
For the given $0 < a < b < \infty$, as in the proof of Lemma \ref{lem:bvp neuman boundary}, we can verify that the function given by
\begin{displaymath}
v(x) : = -x + a  + 2 \int_{a}^{x}s(y) \int_{y}^{b}[-\mu (z)   + K_{1}] m(z) \d z\d y,  \quad x\in [a, b]
\end{displaymath} is a solution to the boundary value problem \begin{displaymath}
\begin{cases}
   \frac12 \sigma^{2} (x) v''(x) +\mu( x) v'(x) + K_{1} =0,   &  x\in (a, b), \\
   v'(a) = 1, \ \ v'(b) = -1,
\end{cases}
\end{displaymath} where $K_{1} = K_{1}(a, b ) = \frac{1}{2 M[a,b]} ( \frac1{s(a)} +  \frac1{s(b)}) > 0.$

It is well-known that
equation  \eqref{e:reflected diffusion:a,b} has a unique solution $X$; see, for example, \cite[Section 2.4]{Harrison-85} or \cite{BurdzyKR09}.   We now apply It\^o's formula to the process $v(X(t))$,
\begin{align*}
 \E_{x}[v(X(t))] & = v(x)   + \E_{x}\bigg[ \int_{0}^{t}\bigg(\mu (X(s)) v'(X(s)) + \frac12\sigma^{2} (X (s)) v''(X(s))\bigg)\d s\bigg] \\ & \qquad \ \quad +\E_{x}\big[v'(a) L_{a}(t) - v'(b) L_{b}(t) \big]  \\
  & = v(x) - K_{1} t + \E_{x}[L_{a}(t) + L_{b}(t)].
\end{align*} Since $v$ is continuous and $X(t) \in [a, b]$ for all $t\ge 0$, it follows that there exists a positive constant  $K_{2} = K_{2}(a, b)$ such that $
\E_{x}[L_{a}(t) + L_{b}(t)] \le  K_{1} t  + K_{2}.
 $ The lemma is proved.
\end{proof}

\begin{corollary}\label{cor-lambda_0>0}
Suppose Assumption \ref{assumption-pi_12}  holds, then   $\lambda_{0} > 0 $. 
\end{corollary}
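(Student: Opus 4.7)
My plan is to establish $\lambda_0 > 0$ by exhibiting one explicit admissible policy whose long-term average reward is a positive number, namely the constant $\lambda(a,b)$ produced by Lemma \ref{lem:bvp neuman boundary}. The natural candidate is the local-time reflection policy on the interval $[a,b]$ from Assumption \ref{assumption-pi_12}(ii), which is admissible by Lemma \ref{lem:(LaLb)admissible}. Thus I would take $\varphi = L_a - L_b$ (with an initial jump to the nearest endpoint if $x\notin[a,b]$, which does not affect the $\liminf$), and let $X$ denote the reflected diffusion of \eqref{e:reflected diffusion:a,b} on $[a,b]$ so that $\d \xi = \d L_a$ and $\d\eta = \d L_b$.

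The main computation is to evaluate the expected long-term average reward under this policy. I would apply It\^o's formula to $u(X(t))$, where $u$ is the solution of the boundary value problem \eqref{e:bvp-(a,b)} provided by Lemma \ref{lem:bvp neuman boundary}. Since $X(s)\in[a,b]$ for $s>0$, the reflected SDE produces boundary terms $u'(a)L_a(t)-u'(b)L_b(t) = c_2 L_a(t)-c_1 L_b(t)$, and on the interior $\LL u(X(s)) = \lambda - h(X(s))$. To control the stochastic integral I would localize with the stopping times $\beta_n = \inf\{t\ge 0: X(t)\notin [a,b]\}$ (actually unneeded since $X$ is bounded), so that the martingale term has zero expectation; taking expectations and rearranging gives
\begin{equation*}
\E_x\Bigl[\int_0^t h(X(s))\,\d s + c_1 L_b(t) - c_2 L_a(t)\Bigr] = u(x) - \E_x[u(X(t))] + \lambda(a,b)\,t.
\end{equation*}
Because $u$ is continuous on the compact interval $[a,b]$, the term $\E_x[u(X(t))]$ is uniformly bounded in $t$; dividing by $t$ and letting $t\to\infty$ yields
\begin{equation*}
\lim_{t\to\infty}\frac{1}{t}\,\E_x\Bigl[\int_0^t h(X(s))\,\d s + c_1 L_b(t) - c_2 L_a(t)\Bigr] = \lambda(a,b).
\end{equation*}

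Since $\varphi = L_a - L_b \in \A_x$, the definition of $\lambda_0$ gives $\lambda_0 \ge \lambda(a,b)$, and Assumption \ref{assumption-pi_12}(ii) together with formula \eqref{e1:lambda} guarantees $\lambda(a,b) > 0$, completing the proof. No serious obstacle arises here: the argument is parallel in structure to the verification proof of Theorem \ref{thm-verification-lta}, except that with the reflection policy every inequality in that proof becomes an equality. The only minor points to attend to are the initial-jump adjustment when $x\notin [a,b]$ (handled trivially by $X(0-)=x\mapsto X(0)\in\{a,b\}$) and the verification that the stochastic integral is a true martingale, which follows from the boundedness of $u'\sigma$ on $[a,b]$.
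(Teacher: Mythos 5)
Your proposal is correct and follows essentially the same route as the paper's own proof: both apply It\^o's formula to the solution $u$ of the boundary value problem \eqref{e:bvp-(a,b)} along the reflected diffusion on $[a,b]$, invoke Lemma \ref{lem:(LaLb)admissible} for admissibility, and conclude $\lambda_{0}\ge \lambda(a,b)>0$ via \eqref{e1:lambda} and Assumption \ref{assumption-pi_12}(ii). The only cosmetic difference is that you make the boundedness of $\E_{x}[u(X(t))]$ explicit, which the paper leaves implicit.
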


\begin{proof}
 To show that $\lambda_{0} > 0$,  we consider the function $u$ of \eqref{e:u-(a, b)(x)} in which we choose $0< a < b<\infty$ so that $\lambda = \lambda (a, b) $ of \eqref{e1:lambda} is positive. Now let $X$ be   the reflected diffusion process on $[a, b]$ given by \eqref{e:reflected diffusion:a,b}. Thanks to Lemma \ref{lem:(LaLb)admissible}, the policy $L_{a}- L_{b}\in \A_{x}$.   Apply It\^o's formula to $u(X(t))$ and then take expectations to obtain
\begin{align*}
\E_{x}[u(X(t))]  &   = u(x) + \E_{x}\bigg[ \int_{0}^{t}\bigg(\mu (X(s)) u'(X(s)) + \frac12\sigma^{2}( X(s)) u''(X(s))\bigg)\d s\bigg]\\ & \qquad \quad\ +\E_{x}\big[u'(a) L_{a}(t) - u'(b) L_{b}(t) \big] \\
  & =  u(x) + \E_{x}\bigg[ \int_{0}^{t} [\lambda-h(X(s))] \d s + c_{2}  L_{a}(t) - c_{1} L_{b}(t)\bigg].
\end{align*} Rearranging
terms,   dividing by $t$, and then passing to the limit as $t\to\infty$, we obtain
\begin{displaymath}
\lim_{t\to\infty} \frac1t\E_{x}\bigg[ \int_{0}^{t}  h(X(s)) \d s - c_{2}  L_{a}(t) + c_{1} L_{b}(t) \bigg] =\lambda = \lambda(a, b),
\end{displaymath} where $ \lambda(a, b)$ is defined in \eqref{e1:lambda}. In other words, the long-term average reward of the policy $L_{a}- L_{b}$ is $ \lambda(a, b) >0$. Now by the definition of $\lambda_{0}$, we have $\lambda_{0} \ge \lambda > 0$.
\end{proof}
		
		
\section {The   Abelian  Limit}\label{sect-abelian limit} 
 For a given $r > 0$, recall the  discounted value function $V_{r}(x)$ defined in \eqref{e-V_r(x) defn}.  Also recall the definition of $\lambda_{0}$ given in \eqref{def: lambda_0}. The following proposition presents  a relationship between the discounted and long-term average problems.

\begin{prop}\label{prop1:Abel-limit} We have $\liminf_{r\downarrow 0} r V_{r}(x)  \ge \lambda_{0}$ for any $x\in [0,\infty)$.
\end{prop}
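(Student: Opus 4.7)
The plan is to treat this as the easy direction of a classical Abelian--Tauberian argument: given any admissible policy whose Ces\`aro average of reward is close to $\lambda_{0}$, its Abel (discounted) average must also be close to $\lambda_{0}$ in the limit $r\downarrow 0$. Concretely, I would fix $\e>0$ and, by the very definition of $\lambda_{0}$ as a supremum, select $\vphi=(\xi,\eta)\in\A_{x}$ with
\begin{displaymath}
\liminf_{T\to\infty}\frac{1}{T}\E_{x}[A(T)] \ge \lambda_{0}-\e, \qquad A(T):=\int_{0}^{T} h(X(s))\d s+c_{1}\eta(T)-c_{2}\xi(T).
\end{displaymath}
It then suffices to prove $\liminf_{r\downarrow 0}rV_{r}(x)\ge \lambda_{0}-\e$ for this single $\vphi$ and let $\e\downarrow 0$.

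The key manipulation is an integration by parts applied to each of the three pieces of the discounted functional. Using $\xi(0-)=\eta(0-)=0$ together with the polynomial growth bound $\E_{x}[\xi(T)]\le K_{1}(x)T^{n}+K_{2}(x)$ built into $\A_{x}$ (and an analogous bound for $\eta$ that one derives once from It\^o's formula applied to $X$, or that is implicit in Assumption~\ref{assumption-V_r}), the boundary terms $e^{-rs}\E_{x}[\xi(s)]$, $e^{-rs}\E_{x}[\eta(s)]$, and $e^{-rs}\E_{x}\big[\int_{0}^{s}h(X(u))\d u\big]$ all vanish at $s=\infty$ for every $r>0$. Fubini then yields the identity
\begin{displaymath}
\E_{x}\!\bigg[\!\int_{0}^{\infty}\!e^{-rs}\bigl(h(X(s))\d s+c_{1}\d\eta(s)-c_{2}\d\xi(s)\bigr)\!\bigg]=r\!\int_{0}^{\infty}\!e^{-rs}\E_{x}[A(s)]\d s.
\end{displaymath}
Multiplying by $r$ and changing variables, I obtain the representation
\begin{displaymath}
rV_{r}(x)\ \ge\ r^{2}\int_{0}^{\infty}e^{-rs}\E_{x}[A(s)]\d s\ =\ \int_{0}^{\infty}\bigl(r^{2}se^{-rs}\bigr)\cdot\frac{\E_{x}[A(s)]}{s}\d s,
\end{displaymath}
in which the weight $r^{2}se^{-rs}\d s$ is a $\mathrm{Gamma}(2,r)$ probability density whose mass escapes to $+\infty$ as $r\downarrow 0$.

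Given $\delta>0$, choose $s_{0}$ so that $\E_{x}[A(s)]/s\ge \lambda_{0}-\e-\delta$ for all $s\ge s_{0}$, and split the integral at $s_{0}$. The contribution from $[0,s_{0}]$ is $O(r^{2})$ because $\E_{x}[A(s)]$ is bounded on compact intervals under the admissibility growth conditions. The contribution from $[s_{0},\infty)$ is bounded below by $(\lambda_{0}-\e-\delta)\int_{s_{0}}^{\infty}r^{2}se^{-rs}\d s$, and this integral tends to $1$ as $r\downarrow 0$ since $\int_{s_{0}}^{\infty}r^{2}se^{-rs}\d s=\int_{rs_{0}}^{\infty}ue^{-u}\d u\to \int_{0}^{\infty}ue^{-u}\d u=1$. (The inequality direction is preserved whether $\lambda_{0}-\e-\delta$ is positive or negative, because $r^{2}se^{-rs}\ge 0$.) Passing to $\liminf_{r\downarrow 0}$ and then letting $\delta\downarrow 0$ and $\e\downarrow 0$ finishes the proof.

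The one spot requiring genuine care is the integration-by-parts step: the vanishing of the boundary terms at infinity relies on polynomial-in-$T$ control of $\E_{x}[\eta(T)]$ and of $\E_{x}\!\int_{0}^{T}h(X(s))\d s$, not just of $\E_{x}[\xi(T)]$. I would establish these once at the outset by combining the admissibility bound on $\xi$ with It\^o's formula on a suitable Lyapunov function (so that $\eta$ inherits the same polynomial growth), which is where the dynamics assumptions really enter. Everything after that is bookkeeping.
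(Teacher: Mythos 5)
Your overall strategy is the same as the paper's: pick a near-optimal policy for the long-run average, write the discounted reward of that policy as an Abel average $r\int_0^\infty e^{-rs}F(s)\,\d s$ of the running reward $F(s)=\E_x[A(s)]$ via integration by parts, observe that the resulting kernel is a probability density whose mass escapes to infinity as $r\downarrow 0$, and split the integral at a point beyond which $F(s)/s$ is within $\delta$ of the target. (Your use of the definition of $V_r$ directly, rather than the dynamic programming principle the paper invokes, is an inessential difference; the kernel $r^2se^{-rs}$ versus the paper's $r^2(s+1)e^{-rs}$ likewise.)

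The one step that does not go through as you propose is the integration by parts over all of $[0,\infty)$ with \emph{vanishing} boundary terms. The admissibility class $\A_x$ only imposes polynomial growth on $\E_x[\xi(T)]$; it says nothing about $\E_x[\eta(T)]$ or $\E_x[\int_0^T h(X(s))\,\d s]$, and for this proposition (which is stated before Assumptions \ref{assumption-pi_12}--\ref{assumption-mu-sigma} are in force) you cannot derive a polynomial bound on $\E_x[\eta(T)]$ from a Lyapunov argument: for a general drift $\mu$ there is no a priori control on how much mass an arbitrary admissible harvesting process removes by time $T$, so $e^{-rT}F(T)$ need not tend to $0$. The paper's fix is cheaper and avoids any such bound: since $\lambda_0>0$ forces $F(T)>0$ for all $T\ge T_1$, the finite-horizon identity
\begin{displaymath}
\E_x\bigg[\int_0^T e^{-rs}\bigl(h(X(s))\,\d s+c_1\,\d\eta(s)-c_2\,\d\xi(s)\bigr)\bigg]=e^{-rT}F(T)+r\int_0^T e^{-rs}F(s)\,\d s
\end{displaymath}
lets you simply \emph{drop} the nonnegative boundary term for $T\ge T_1$ and then let $T\to\infty$ in the remaining (eventually monotone) integral; no decay of $e^{-rT}F(T)$ is needed, only its eventual sign. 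If you replace your "boundary terms vanish" claim with this one-sided observation, the rest of your argument (the $O(r^2)$ estimate on $[0,s_0]$, which does follow from the $\xi$-bound and local boundedness, and the Gamma-mass computation on $[s_0,\infty)$) is correct and matches the paper's.
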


\begin{proof} Since $V_{r}(x) \ge 0$, the relation $\liminf_{r\downarrow 0} r V_{r}(x)  \ge \lambda_{0}$ holds trivially when $\lambda_{0} =0$. Now assume that $\lambda_{0} > 0$. Let  $x\in [0,\infty)$.
For any $0 < K< \lambda_{0}$, there exists a policy $\vphi =\xi-\eta \in \A_{x}$ so that \begin{equation}
\label{e:F(T)/T>K}
\lambda_{0}\ge  \liminf_{T\to\infty} \frac1T \E_{x}\bigg[\int_{0}^{T} h(X(s))\d s - c_{2} \xi(T) + c_{1}\eta(T)  \bigg] \ge K;
\end{equation} here $X$ is the controlled process corresponding to the policy $\vphi$ with initial condition $X(0-) =x$.  Let $F(T): = \E_{x}[\int_{0}^{T} h(X(s))\d s - c_{2} \xi(T) + c_{1}\eta(T)   ] $ and $G(T) := \frac{F(T)}{T+1}$ for $T>0$.  {Thanks to \eqref{e:F(T)/T>K}, there exists a $T_{1} > 0$ such that $F(T) > 0$ for all $T \ge T_{1}$. Consequently we can use integration by parts and Fubini's theorem to obtain \begin{align*}
 \E_{x} \bigg[\int_{0}^{T} e^{-r s} (h(X(s))\d s - c_{2}\d \xi(s) + c_{1}\d\eta(s))  \bigg]
& = e^{-r T } F(T) + r\int_{0}^{T} e^{-rs} F(s) \d s\\ &  \ge  r\int_{0}^{T} e^{-rs} F(s) \d s,
\end{align*} for all $T \ge T_{1}$.}
Then it follows from the dynamic programming principle (see \cite{GuoP-05})
that for all $T \ge T_{1}$,
\begin{align*}
r V_{r}(x)  &
\ge \sup_{\vphi\in \A_{x}}  r \E_{x}  \bigg[\int_{0}^{T} e^{-r s} (h(X(s))\d s - c_{2}\d \xi(s) + c_{1}\d\eta(s)) + e^{-r T} V_{r}(X(T)) \bigg]   \\
&  \ge r \E_{x} \bigg[\int_{0}^{T} e^{-r s} (h(X(s))\d s - c_{2}\d \xi(s) + c_{1}\d\eta(s))  \bigg]     \\
&    \ge {   r^{2} \int_{0}^{\infty} e^{-r s} F(s) \d s} = r^{2} \int_{0}^{\infty} e^{-r s} (s+1) G(s) \d s = \int_{0}^{\infty} e^{-t}(t+r) G(t/r) \d t.
\end{align*} In view of  \eqref{e:F(T)/T>K}, for any $\e > 0$,  we can find  a $T_{2}>0$ so that $G(T) = \frac{F(T)}{T+1} \ge K-\e$ for all $T\ge T_{2}$. Denote $T_{0} : = T_{1}\vee T_{2}$. Then we can estimate
\begin{align*}
 r V_{r}(x)  & \ge  \int_{0}^{ r T_{0}} e^{-t}(t+r) G(t/r) \d t + \int_{r T_{0}}^{\infty} e^{-t}(t+r) G(t/r) \d t\\ &\ge  \int_{0}^{ r T_{0}} e^{-t} F(t/r) \d t +   \int_{r T_{0}}^{\infty} e^{-t}(t+r) (K-\e ) \d t\\
  & \ge -r^{2} T_{0} \max_{t\in [0, T_{0}]} F(t) + (K-\e) (1+r -r^{2}(T_{0}+1)).
\end{align*} Letting $r\downarrow 0$, we obtain $\liminf_{r\downarrow 0} r V_{r}(x)  \ge K -{\e}$. Since $\e > 0$ and $K < \lambda_{0} $ are arbitrary, we obtain $\liminf_{r\downarrow 0} r V_{r}(x)  \ge \lambda_{0}$, which completes  the proof.
\end{proof}

 \comment{	 For the $r>0$ given, denote by\begin{align*}
 & m_{r}: = -\frac{\mu}{\sigma^2} + \frac{1}{2} - \sqrt{\left( -\frac{\mu}{\sigma^2}+\frac{1}{2}\right)^2 +  \frac{2r}{\sigma^2}} < 0, \ \text{ and } \\
& 		n_{r}:=  -\frac{\mu}{\sigma^2} + \frac{1}{2} + \sqrt{\left( -\frac{\mu}{\sigma^2}+\frac{1}{2}\right)^2 +  \frac{2r}{\sigma^2}} > 1
 \end{align*}
$\psi_{r}$ and $\phi_{r}$	the increasing and decreasing solutions to the equation $\frac12 \sigma^{2} (x) u''(x) + \mu(x)  u'(x) -ru(x) =0$. Also let \begin{displaymath}
g_{r}(x) : = \E_{x}\bigg[\int_{0}^{\infty} e^{-rt} h(X_{0}(t))\d t\bigg],
\end{displaymath} where $X_{0}$ is the uncontrolled process \eqref{e:uncontrolled-SDE} with initial condition $X_{0}(0) =x$. \comment{ It is shown in \cite{GuoP-05} that $g_{r}$ is twice continuously differentiable on $(0, \infty)$ with $g_{r}(0+) =0$ and $g_{r}'(0+) = \infty$.}

\begin{theorem}[\cite{GuoP-05}] \label{thm-GP paper} Let Assumptions \ref{h-assumption} and \ref{assumption:mu} hold.  Then there exists a unique pair $0 < a_{r} < b_{r} < \infty$ so that   the reflected diffusion process on the interval $[a_{r}, b_{r}]$ under the local time reflections $L_{a_{r}} - L_{b_{r}}$ is an optimal state process for the discounted problem \eqref{e-V_r(x) defn}.
The value function $V_{r}$ 
 is given by
\begin{equation}\label{e:V_r defn}
		V_{r}(x) = \begin{cases}
				 c_2 x + v_{r}(0^+),  & \text{if} \quad x\leq a_{r}, \\
				A_{r}x^{m_{r}} + B_{r}x^{n_{r}} +g_{r}(x), & \text{if} \quad a_{r} < x < b_{r},\\
				c_{1}x    + C_{r},  & \text{if} \quad x\geq b_{r},
			\end{cases}
		\end{equation} where the six-tuple of constants $(A_{r}, B_{r}, a_{r}, b_{r}, v_{r}(0^+), C_{r})$ is the unique solution to the system of equations:
	\begin{displaymath}
\begin{cases}
    A_{r} a_{r}^{m_{r}} + B_{r} a_{r}^{n_{r}} + g_{r}(a_{r}) = c_{2} a_{r} +   v_{r}(0^+),  &   \\
      A_{r} b_{r}^{m_{r}} + B_{r} b_{r}^{n_{r}} + g_{r}(b_{r}) = c_{1} b_{r} +   C_{r},   &    \\
       A_{r} m_{r}a_{r}^{m_{r}-1} + B_{r} n_{r}a_{r}^{n_{r} -1} + g_{r}'(a_{r}) = c_{2},   &\\
       A_{r} m_{r}b_{r}^{m_{r}-1} + B_{r} n_{r}b_{r}^{n_{r} -1} + g_{r}'(b_{r}) = c_{1}, &\\
     A_{r} m_{r} (m_{r}-1)a_{r}^{m_{r}-2} + B_{r} n_{r}(n_{r} -1)a_{r}^{n_{r} -2} + g_{r}''(a_{r}) = 0,  &   \\
     A_{r} m_{r}(m_{r}-1)b_{r}^{m_{r}-2} + B_{r} n_{r} (n_{r} -1)b_{r}^{n_{r} -2} + g_{r}''(b_{r}) =0.  &
\end{cases}
\end{displaymath} In addition,  $V_{r}$ is the unique classical solution to \eqref{e:HJB-r}.	
\end{theorem}}

Usually one can show that the value function $V_{r}$ of  \eqref{e-V_r(x) defn} is a viscosity solution to the HJB equation
\begin{eqnarray}\label{e:HJB-r}
		\min\{rv(x)  -\LL v(x) -h(x), -v'(x) + c_2, v'(x)-c_1\} = 0, \quad x\in(0,\infty).
		\end{eqnarray}
Under additional assumptions such as concavity of the function $h$, for specific models (such as geometric Brownian motion in \cite{GuoP-05}), one can further show that   $V_{r}$ is a smooth solution to \eqref{e:HJB-r} and that
 there exist $0 <  a_{r} < b_{r} < \infty$ so that 
 the reflected diffusion process on the interval $[a_{r}, b_{r}]$ is an optimal state process. In other words, the policy $L_{a_{r}}- L_{b_{r}}$ is an optimal control policy, where $L_{a_{r}}$ and $ L_{b_{r}}$ denote  the local times of the controlled process $X$ at $a_{r}$ and $b_{r}$, respectively. If the initial position $X(0-)$ is outside the interval $[a_{r}, b_{r}]$, then an initial jump to the nearest boundary point is exerted at time 0.  
 We also refer to \cite{Mato-12} and \cite{Weera-05} for sufficient conditions for the optimality of such policies for general one-dimensional diffusion processes under different settings.

 Motivated by these recent developments,
we make the following assumption:
\begin{assumption}\label{assumption-V_r}
For each $r> 0$,   there exist two numbers with $0 <  a_{r} < b_{r} < \infty$ so that  the discounted value function $V_{r}(x)$  of  \eqref{e-V_r(x) defn}  is $C^{2}([0,\infty))$ and satisfies the following   system of equations:
\begin{equation}\label{e2:HJB-r}
\begin{cases}
  r  V_{r}(x)  -\LL  V_{r}(x)  -h (x) =0,  \quad  \  c_{1} \le V_{r}'(x) \le c_{2},     & x\in (a_{r}, b_{r}), \\
 r  V_{r}(x)  -\LL  V_{r}(x)  -h (x)  \ge 0, \quad \ V_{r}'(x) = c_{1},  & x\ge b_{r},\\ 
   r  V_{r}(x)  -\LL  V_{r}(x)  -h (x) \ge 0,  \quad\  V_{r}'(x) = c_{2},& x\le a_{r}. 
\end{cases}
\end{equation}
\end{assumption}

In addition, the following assumption is needed for the proof of Theorem  \ref{thm-lta-main result}.
\begin{assumption}\label{assumption-mu-sigma}
The functions $h$, $\mu$,  and $\sigma$ are continuously differentiable and satisfies $\inf_{x\in [a, b]} \sigma^{2}(x) > 0$ for any $[a, b] \subset (0, \infty)$.
\end{assumption}

We now state the main result of this section.

\begin{theorem}\label{thm-lta-main result}
Let  Assumptions  \ref{assumption-pi_12}, \ref{assumption-V_r}, and  \ref{assumption-mu-sigma}  hold. Then 
there exist positive constants
$ a_{*} < b_{*}$ so that  the following  statements hold true:
\begin{itemize}
  \item[(i)] $\lim_{r\downarrow 0}rV_r(x) = \lambda_{0}$ for all $x \in \mathbb{R}$.
  \item[(ii)]  The reflected diffusion process on the state space $[a_{*},  b_{*}]$ $($if the initial point is outside this interval, then there will be an initial jump to the nearest point of the interval$)$ is an optimal state process for the ergodic control problem \eqref{def: lambda_0}. Hence the optimal control policy here is given by $\vphi_{*} = L_{a_{*}} - L_{b_{*}}$, in which $L_{a_{*}} $ and $L_{b_{*}}$ denote  the local time processes at $a_{*}$ and $b_{*}$, respectively.
\end{itemize}
\end{theorem}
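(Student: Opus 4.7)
The plan is to implement the vanishing discount method, using Assumption \ref{assumption-V_r} as the starting point: for each $r>0$ we have $V_r\in C^2([0,\infty))$ and a pair $0<a_r<b_r<\infty$ satisfying the free-boundary system \eqref{e2:HJB-r}. I will extract a subsequence $r_n\downarrow 0$ along which $(a_{r_n},b_{r_n},r_nV_{r_n}(x_0),V_{r_n}(\cdot)-V_{r_n}(x_0))$ all converge, identify the limit with a smooth solution of \eqref{e:HJB-lta} at some constant $\lambda$, and then conclude $\lambda=\lambda_0$ from Proposition \ref{prop1:Abel-limit} and Theorem \ref{thm-verification-lta}.

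Fix a reference point $x_0\in(0,\infty)$ and set $u_r(x):=V_r(x)-V_r(x_0)$. Because \eqref{e2:HJB-r} forces $c_1\le V_r'(x)\le c_2$ on $[0,\infty)$, the family $\{u_r\}$ is uniformly Lipschitz, hence locally equicontinuous, and by Arzel\`a-Ascoli it has a subsequence converging locally uniformly to some $u_*$ with $c_1\le u_*'\le c_2$ a.e. Simultaneously I want to show that the four-tuple $(a_r,b_r,rV_r(x_0))$ stays in a compact subset of $(0,\infty)\times(0,\infty)\times(0,\infty)$. The lower bound $\liminf_{r\downarrow 0}rV_r(x_0)\ge\lambda_0>0$ is Proposition \ref{prop1:Abel-limit} together with Corollary \ref{cor-lambda_0>0}. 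For the upper bound I will combine the structure of \eqref{e2:HJB-r} on the action regions with Assumption \ref{assumption-pi_12}(i): on $\{x\ge b_r\}$ one has $V_r(x)=V_r(b_r)+c_1(x-b_r)$ and the HJB inequality gives $rV_r(x)\ge h(x)+c_1\mu(x)$, while on $\{x\le a_r\}$ one has $rV_r(x)\ge h(x)+c_2\mu(x)$; since $h+c_1\mu<0$ for large $x$ and $h+c_2\mu\le 0$ near $0$, these inequalities (together with $V_r\ge 0$) force $b_r$ to stay bounded above and $a_r$ to stay bounded away from $0$, and also bound $rV_r(x_0)$ above.

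Once $a_{r_n}\to a_*$, $b_{r_n}\to b_*$ and $r_nV_{r_n}(x_0)\to\lambda$, I need to show $a_*<b_*$ and pass to the limit in the PDE. Assumption \ref{assumption-mu-sigma} (smoothness and local nondegeneracy of $\sigma$) lets me rewrite \eqref{e2:HJB-r} on $(a_{r_n},b_{r_n})$ as $V_{r_n}''=\frac{2}{\sigma^2}(rV_{r_n}-\mu V_{r_n}'-h)$, which together with the uniform Lipschitz bound on $V_{r_n}$ and boundedness of $rV_{r_n}$ gives $V_{r_n}''$ locally uniformly bounded; a bootstrap using the ODE then yields local $C^2$ convergence of $u_{r_n}$ to $u_*$ on $(a_*,b_*)$. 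The limit $u_*$ therefore satisfies $\mathcal{L}u_*(x)+h(x)=\lambda$ on $(a_*,b_*)$ with $u_*'(a_*)=c_2$, $u_*'(b_*)=c_1$, extended affinely with slopes $c_2$ and $c_1$ outside; in particular, $u_*$ (after adding a suitable constant to ensure nonnegativity) is a $C^2$ classical solution to \eqref{e:HJB-lta}. To rule out $a_*=b_*$, I use that at any such point the boundary conditions $u_*'(a_*)=c_2$ and $u_*'(b_*)=c_1$ would contradict continuity of $u_*'$ since $c_1<c_2$. By Theorem \ref{thm-verification-lta} applied to $(u_*,\lambda)$, one gets $\lambda_0\le\lambda$; combined with $\liminf rV_r(x_0)\ge\lambda_0$, this forces $\lambda=\lambda_0$ and the entire net $rV_r(x_0)$ converges to $\lambda_0$ (the limit being independent of subsequence since it equals $\lambda_0$). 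The Abelian-limit identity extends to every $x\ge 0$ because $|V_r(x)-V_r(x_0)|\le c_2|x-x_0|$, so $r|V_r(x)-V_r(x_0)|\to 0$.

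Finally, for optimality of $\varphi_*=L_{a_*}-L_{b_*}$: Lemma \ref{lem:(LaLb)admissible} guarantees $\varphi_*\in\A_x$, and applying It\^o's formula to $u_*(X(t))$ along the reflected diffusion $X$ on $[a_*,b_*]$, exactly as in the proof of Corollary \ref{cor-lambda_0>0}, gives
\[
\lim_{t\to\infty}\frac{1}{t}\E_x\!\left[\int_0^t h(X(s))\,\d s+c_1L_{b_*}(t)-c_2L_{a_*}(t)\right]=\lambda=\lambda_0,
\]
so $\varphi_*$ attains the supremum in \eqref{def: lambda_0}. The hardest steps will be the two compactness claims: trapping $\{a_r\}$ and $\{b_r\}$ in a compact subset of $(0,\infty)$ with $a_*<b_*$, and establishing the uniform-in-$r$ upper bound on $rV_r(x_0)$. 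Both rely sharply on Assumption \ref{assumption-pi_12}(i), which has been designed precisely so that the action regions cannot retreat to the boundaries of the state space.
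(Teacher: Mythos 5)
Your overall architecture is the same as the paper's (vanishing discount, compactness of the free boundaries $a_r,b_r$ and of $rV_r$, passage to a limiting ODE on $(a_*,b_*)$, and optimality of the $[a_*,b_*]$-reflection policy via It\^o's formula), but the step you yourself flag as the hardest one is argued in the wrong direction. On the action regions, \eqref{e2:HJB-r} together with $V_r'\equiv c_1$ (resp.\ $c_2$) and $V_r''\equiv 0$ gives $rV_r(x)\ge h(x)+c_1\mu(x)$ for $x\ge b_r$ and $rV_r(x)\ge h(x)+c_2\mu(x)$ for $x\le a_r$. These are \emph{lower} bounds on $rV_r$, and since $V_r\ge 0$ they are automatically satisfied wherever $h+c_i\mu\le 0$; they cannot contradict Assumption \ref{assumption-pi_12}(i), cannot prevent $b_r\to\infty$ or $a_r\to 0$, and cannot give the upper bound on $rV_r(x_0)$ that your Arzel\`a--Ascoli and bootstrap steps require. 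What is actually needed are the smooth-pasting \emph{equalities}: because $V_r\in C^2$ and the ODE holds with equality on $(a_r,b_r)$, letting $x\uparrow b_r$ and using $V_r'(b_r)=c_1$, $V_r''(b_r)=0$ yields $0\le rV_r(b_r)=h(b_r)+c_1\mu(b_r)$ exactly, which pins $b_r$ in the (bounded) set where $h+c_1\mu\ge 0$; similarly $rV_r(0+)=h(a_r)+c_2\mu(a_r)-rc_2a_r$ forces $\liminf a_r>0$ (using $\liminf rV_r(0+)\ge\lambda_0>0$ from Proposition \ref{prop1:Abel-limit} and Corollary \ref{cor-lambda_0>0}) and simultaneously bounds $rV_r(0+)$, hence $rV_r(x_0)$, from above. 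This is precisely the content of Lemmas \ref{lem-ar>0}--\ref{lem-ar-br-bounded} in the paper; without replacing your inequalities by these identities the compactness claims do not follow.

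A second, smaller slip: you write that Theorem \ref{thm-verification-lta} gives $\lambda_0\le\lambda$ and that combined with $\liminf_{r\downarrow 0}rV_r(x_0)\ge\lambda_0$ this ``forces $\lambda=\lambda_0$.'' Both of those facts assert the same inequality $\lambda\ge\lambda_0$, so nothing is forced. The reverse inequality $\lambda\le\lambda_0$ must come from exhibiting an admissible policy whose long-run average equals $\lambda$ --- namely the It\^o computation for the reflected diffusion on $[a_*,b_*]$ that you carry out in your last paragraph (admissibility being Lemma \ref{lem:(LaLb)admissible}). That computation is not a corollary of $\lambda=\lambda_0$; it is the missing half of the proof of that identity, and once it is put in that role the verification theorem becomes redundant, exactly as in the paper. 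Your remaining ingredients (uniform Lipschitz bound $c_1\le V_r'\le c_2$, the $C^2$ bootstrap using Assumption \ref{assumption-mu-sigma}, the continuity argument ruling out $a_*=b_*$, and the extension of the Abelian limit to all $x$ via $r|V_r(x)-V_r(x_0)|\le rc_2|x-x_0|\to 0$) are all sound and match the paper's use of $w_r=V_r'$.
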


To prove Theorem \ref{thm-lta-main result}, we first establish a series of technical lemmas.

 \begin{lemma}\label{lem-ar>0} Suppose Assumptions \ref{assumption-pi_12} (i) and \ref{assumption-V_r} hold. Then
 $\lambda_{0} > 0$  if and only if  $\liminf_{r\downarrow 0} a_{r} >0$.
\end{lemma}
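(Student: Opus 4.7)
My plan is to prove both directions by combining the boundary structure of $V_r$ at $a_r$ (supplied by Assumption \ref{assumption-V_r}) with the one-sided Abelian inequality $\liminf_{r\downarrow 0} rV_r(x) \ge \lambda_0$ of Proposition \ref{prop1:Abel-limit}.

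For the ``if'' direction, I would begin by noting that Assumption \ref{assumption-V_r} already forces $a_r>0$ for every $r>0$, so the hypothesis $\liminf a_r>0$ plays essentially no role beyond guaranteeing admissibility of the reflection policy. Fix any $r>0$. The policy $\vphi\equiv 0$ in \eqref{e-V_r(x) defn} together with $h\ge 0$ gives $V_r\ge 0$ pointwise, hence $V_r(a_r)=V_r(0)+c_2 a_r\ge c_2 a_r>0$. Integrating the HJB equality $rV_r=\LL V_r+h$ on $(a_r,b_r)$ against $2m(y)\,\d y$ and using $\int_{a_r}^{b_r} 2m(y)\LL V_r(y)\,\d y=[V_r'/s]_{a_r}^{b_r}=c_1/s(b_r)-c_2/s(a_r)$ would yield $r\bar V_r=\lambda(a_r,b_r)$, where $\bar V_r:=\frac{1}{M[a_r,b_r]}\int_{a_r}^{b_r}V_r(y)m(y)\,\d y$ and $\lambda(a_r,b_r)$ is the constant from \eqref{e1:lambda}. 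Since $V_r'\ge c_1>0$ makes $V_r$ strictly increasing on $[a_r,b_r]$, the weighted average satisfies $\bar V_r>V_r(a_r)>0$, whence $\lambda(a_r,b_r)>0$. By Lemma \ref{lem:(LaLb)admissible} and the computation in the proof of Corollary \ref{cor-lambda_0>0}, the reflection policy $L_{a_r}-L_{b_r}$ is admissible and achieves long-term average $\lambda(a_r,b_r)$, giving $\lambda_0\ge\lambda(a_r,b_r)>0$.

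For the ``only if'' direction I would argue contrapositively: suppose $\liminf_{r\downarrow 0}a_r=0$ and conclude $\lambda_0=0$. Extract a subsequence $r_n\downarrow 0$ with $a_{r_n}\to 0$. Because $V_{r_n}$ is $C^2$ on $[0,\infty)$ and coincides with $V_{r_n}(0)+c_2 x$ on $[0,a_{r_n}]$, the $C^2$ regularity forces $V_{r_n}'(a_{r_n})=c_2$ and $V_{r_n}''(a_{r_n})=0$; passing to the limit $x\downarrow a_{r_n}$ in the HJB equality from $(a_{r_n},b_{r_n})$ then yields $r_n V_{r_n}(a_{r_n})=h(a_{r_n})+c_2\mu(a_{r_n})$. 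Assumption \ref{assumption-pi_12}(i) sends the right-hand side to a limit $\le 0$, while the nonnegativity of $V_{r_n}$ forces the left-hand side to be $\ge 0$; hence both tend to $0$. The identity $V_{r_n}(0)=V_{r_n}(a_{r_n})-c_2 a_{r_n}$ together with the global bound $V_{r_n}(x)\le V_{r_n}(0)+c_2 x$ (valid since $V_{r_n}'\le c_2$ on all of $[0,\infty)$) then gives $r_n V_{r_n}(x)\to 0$ for every fixed $x\ge 0$. Proposition \ref{prop1:Abel-limit} forces $\lambda_0\le\liminf_{r\downarrow 0}rV_r(x)\le 0$, and $\lambda_0\ge 0$ completes the argument. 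The one delicate point is to justify $V_r''(a_r)=0$ together with the HJB equality at the transition point $a_r$; both follow directly from the $C^2$ regularity and the constancy of $V_r'$ on $[0,a_r]$ in Assumption \ref{assumption-V_r}, after which each direction reduces to a clean limiting computation.
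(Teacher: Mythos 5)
Your proof is correct, and its two halves relate to the paper's argument in different ways. Your ``only if'' direction is essentially the paper's: both rest on evaluating the HJB equality at $a_r$, where the $C^2$ regularity and the constancy of $V_r'$ on $[0,a_r]$ force $V_r'(a_r)=c_2$ and $V_r''(a_r)=0$, giving $rV_r(0+)=h(a_r)+c_2\mu(a_r)-rc_2a_r$; one then combines Assumption \ref{assumption-pi_12}(i), the nonnegativity of $V_r$, and Proposition \ref{prop1:Abel-limit} exactly as you do. Your ``if'' direction is genuinely different. The paper's two cases are both, logically, the implication $\lambda_0>0\Rightarrow\liminf_{r\downarrow0}a_r>0$ (the second case being its contrapositive), so the converse implication is not argued there directly; in the paper's development the positivity of $\lambda_0$ is instead imported from Corollary \ref{cor-lambda_0>0}, which requires Assumption \ref{assumption-pi_12}(ii). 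You give a self-contained proof of the converse by integrating $rV_r=\LL V_r+h$ against $2m(y)\,\d y$ over $(a_r,b_r)$ to obtain $r\bar V_r=\lambda(a_r,b_r)$, and then combining $\bar V_r\ge V_r(a_r)\ge c_2a_r>0$ with the admissibility and long-run reward of the $(a_r,b_r)$-reflection policy (Lemma \ref{lem:(LaLb)admissible} and the computation in Corollary \ref{cor-lambda_0>0}) to conclude $\lambda_0\ge\lambda(a_r,b_r)>0$. This is valid, and your remark that the hypothesis $\liminf_{r\downarrow0}a_r>0$ is never used is accurate: your identity shows that Assumptions \ref{assumption-pi_12}(i) and \ref{assumption-V_r} already force $\lambda_0>0$ for each fixed $r$, so under the lemma's standing hypotheses both sides of the equivalence hold automatically and the ``if'' direction becomes vacuous. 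What your route buys is this sharpening (positivity of $\lambda_0$ without Assumption \ref{assumption-pi_12}(ii)); what the paper's route reflects is that only the forward implication is ever invoked later (in Lemma \ref{lem-ar-br-bounded}, together with Corollary \ref{cor-lambda_0>0}). Two cosmetic caveats: $V_r(0)$ should be read as $V_r(0+)\ge0$ when $0$ is a natural boundary, and the It\^o computation for the reflected diffusion on $[a_r,b_r]$ tacitly uses nondegeneracy of $\sigma$ on compact subsets of $(0,\infty)$, exactly as the paper itself does.
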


\begin{proof}
Recall that the function   $V_{r}\in C^{2}(0, \infty)$ satisfies $r V_{r}(x) - \mu( x) V_{r}'(x) - \frac12 \sigma^{2}(x) V''_{r} (x) - h(x) = 0$ for $x\in (a_{r}, b_{r})$ and $V_{r}(x) = c_{2} x + V_{r}(0+)$ for $x\le a_{r}$, where $V_{r}(0+)= \lim_{x\downarrow 0} V_{r}(x) = V_{r}(a_{r}) - c_{2} a_{r}$.   
Therefore
 the smooth pasting principle for $V_{r}$ at $a_{r}$ implies that $V_{r}'(a_{r}) = c_{2}$ and $V_{r}''(a_{r}) =0$. Thus it follows that  \begin{equation}
\label{e:6.13GP}
r(c_{2}a_{r} + V_{r}(0+)) -c_{2}\mu(a_{r}) - h(a_{r}) =0 \quad \text{ or } \quad  rV_{r}(0+) = h(a_{r}) +c_{2}\mu(a_{r})- r c_{2}a_{r}.  
\end{equation} 

(i) Suppose first that $\lambda_{0} > 0$. Then Proposition \ref{prop1:Abel-limit} implies that any limit point of $\{r V_{r}(0+)\}$ must be greater than 0.  If there exists a sequence $\{r_{n}\} \subset (0, 1]$ for which $\lim_{n\to\infty} a_{r_{n}} =0$, then passing to the limit as $n\to\infty$ in \eqref{e:6.13GP} will give us $\lim_{n\to\infty} r_{n} V_{r_{n}}(0+) = \lim_{n\to\infty}[ h(a_{r_{n}}) + c_{2} \mu(a_{r_{n}}) ] \le  0$ thanks to Assumption \ref{assumption-pi_12} (i).    This is a contradiction. 

(ii)  Now if $\liminf_{r\downarrow 0} a_{r} =0$, then using \eqref{e:6.13GP}  again, we obtain  $\lim_{n\to\infty} r_{n} V_{r_{n}}(0+) = 0$ for some sequence  $\{r_{n}\}$ such that $\lim_{n\to\infty} r_{n } =0$ and
   $\lim_{{n }\to \infty} a_{r_{n}} =0$. 
This, together with Proposition \ref{prop1:Abel-limit},
indicates that $\lambda_{0} =0$.
\end{proof}

\begin{lemma}\label{lem-br<infty}
Suppose Assumptions \ref{assumption-pi_12} (i) and \ref{assumption-V_r} hold. Then   $\limsup_{r\downarrow 0 } b_{r}  < \infty$.
\end{lemma}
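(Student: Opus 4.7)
The plan is to derive a smooth-fit identity at $b_r$, and then use Assumption \ref{assumption-pi_12}(i) together with nonnegativity of $V_r$ to force $b_r$ to stay bounded as $r \downarrow 0$.

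First I would observe that since Assumption \ref{assumption-V_r} guarantees $V_r \in C^2([0,\infty))$ while $V_r'(x) \equiv c_1$ on $[b_r,\infty)$, the continuity of $V_r''$ at $b_r$ gives
\[
V_r'(b_r) = c_1, \qquad V_r''(b_r) = 0.
\]
Evaluating the HJB equation $r V_r(x) - \mathcal{L} V_r(x) - h(x) = 0$ (valid up to $x = b_r$ by continuity, since it holds on $(a_r, b_r)$) at $x = b_r$ then yields the clean identity
\[
r V_r(b_r) = h(b_r) + c_1 \mu(b_r),
\]
which plays the same role as \eqref{e:6.13GP} did in Lemma \ref{lem-ar>0}.

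Next I would note that $V_r \ge 0$ on $[0,\infty)$: this follows from the definition \eqref{e-V_r(x) defn} because the zero control $\varphi \equiv 0$ is admissible and the running reward $h$ is nonnegative. Hence the left-hand side of the identity above is nonnegative for every $r > 0$, so
\[
h(b_r) + c_1 \mu(b_r) \ge 0 \quad \text{for all } r > 0.
\]

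Finally, I would argue by contradiction: suppose $\limsup_{r \downarrow 0} b_r = \infty$, so that there exists a sequence $r_n \downarrow 0$ with $b_{r_n} \to \infty$. By Assumption \ref{assumption-pi_12}(i), $\lim_{x \to \infty}[h(x) + c_1 \mu(x)] < 0$, so for all sufficiently large $n$ we would have $h(b_{r_n}) + c_1 \mu(b_{r_n}) < 0$, contradicting the inequality just established. Therefore $\limsup_{r \downarrow 0} b_r < \infty$. The argument is essentially parallel in spirit to Lemma \ref{lem-ar>0}; the only subtle point is justifying the smooth-fit equalities $V_r'(b_r) = c_1$ and $V_r''(b_r) = 0$, but these are immediate consequences of the $C^2$ regularity built into Assumption \ref{assumption-V_r}, so no real obstacle arises.
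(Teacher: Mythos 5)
Your proposal is correct and follows essentially the same route as the paper: smooth fit at $b_r$ (using the $C^2$ regularity from Assumption \ref{assumption-V_r}) yields $r V_r(b_r) = h(b_r) + c_1\mu(b_r)$, and nonnegativity of $V_r$ combined with $\lim_{x\to\infty}[h(x)+c_1\mu(x)]<0$ rules out any sequence $b_{r_n}\to\infty$. The only difference is that you spell out the justification of $V_r\ge 0$ via the zero control, which the paper leaves implicit.
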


\begin{proof}
As in the proof of Lemma \ref{lem-ar>0}, we can use the smooth pasting
for the function $V_{r}$ at $b_{r}$ to obtain \begin{equation}\label{e:6.14GP}
0 < r V_{r} (b_{r}) = c_{1} \mu (b_{r}) + h(b_{r}).
\end{equation} 
Suppose that there exists some sequence $\{r_{n}\} \subset (0, 1]$ so that $\lim_{n\to\infty} r_{n} =0$ and $\lim_{n\to\infty} b_{r_{n}} =\infty$. Now passing to the limit as $n\to\infty$ in  \eqref{e:6.14GP}, we obtain   $$ 0\le \lim_{n\to\infty}  r_{n} V_{r_{n}} (b_{r_{n}}) =  \lim_{n\to\infty} ( c_{1} \mu( b_{r_{n}}) + h(b_{r_{n}})) < 0$$
 thanks to Assumption \ref{assumption-pi_12} (i).  This is a contradiction. Hence $\limsup_{r\downarrow 0 } b_{r}  < \infty$ and the assertion of the lemma follows. \end{proof}


The following lemma can be obtained   directly from Corollary \ref{cor-lambda_0>0} and Lemmas \ref{lem-ar>0} and \ref{lem-br<infty}.

\begin{lemma}\label{lem-ar-br-bounded}
Suppose Assumptions \ref{assumption-pi_12} (i) and \ref{assumption-V_r} hold.  Then  there exist     constants $0 <r_{0} <1$ and $0 < K_{1} < K_{2} < \infty$ so that $  K_{1} \le a_{r} < b_{r} \le K_{2} $ for all  $0 < r \le r_{0}$.
\end{lemma}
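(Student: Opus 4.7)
The plan is to combine the three cited results in a straightforward way, since the substance is packaged into them. First I would invoke Corollary \ref{cor-lambda_0>0} (under the full Assumption \ref{assumption-pi_12}) to conclude $\lambda_{0}>0$. Then I would feed this positivity into Lemma \ref{lem-ar>0}, whose ``if and only if'' direction yields $\liminf_{r\downarrow 0} a_{r}>0$, and separately invoke Lemma \ref{lem-br<infty} to obtain $\limsup_{r\downarrow 0} b_{r}<\infty$. Finally I would turn these $\liminf$/$\limsup$ statements into uniform two-sided bounds valid on a half-neighborhood $(0,r_{0}]$ of the origin.

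More concretely, I would set
\[
K_{1} := \tfrac{1}{2}\liminf_{r\downarrow 0} a_{r}, \qquad K_{2} := 2\limsup_{r\downarrow 0} b_{r}.
\]
By the two lemmas just cited, $0<K_{1}<\infty$ and $0<K_{2}<\infty$. By the definitions of $\liminf$ and $\limsup$, there exist $r_{1}, r_{2}\in (0,1)$ such that $a_{r}\ge K_{1}$ for every $r\in (0,r_{1}]$ and $b_{r}\le K_{2}$ for every $r\in(0,r_{2}]$. Set $r_{0}:=\min\{r_{1},r_{2}\}\in (0,1)$. Since Assumption \ref{assumption-V_r} guarantees $a_{r}<b_{r}$ for each $r$, and since $K_{1}\le \liminf_{r\downarrow 0}a_{r}\le \limsup_{r\downarrow 0}a_{r}\le \limsup_{r\downarrow 0} b_{r}<K_{2}$ (the strict inequality coming from the factor $2$ in the definition of $K_{2}$), the ordering $K_{1}<K_{2}$ holds, and for all $r\in(0,r_{0}]$ we obtain
\[
K_{1}\le a_{r}<b_{r}\le K_{2},
\]
which is the desired conclusion.

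The only thing to watch is that the three cited results can be chained together under the hypotheses as stated: Lemma \ref{lem-ar>0} and Lemma \ref{lem-br<infty} each need only Assumption \ref{assumption-pi_12}(i) together with Assumption \ref{assumption-V_r}, while the positivity of $\lambda_{0}$ via Corollary \ref{cor-lambda_0>0} uses Assumption \ref{assumption-pi_12} in full; both parts are in force here. There is no genuine obstacle in this proof — the real work is done in establishing the smooth-pasting identities \eqref{e:6.13GP} and \eqref{e:6.14GP} inside the previous two lemmas, and in the lower bound $\lambda_{0}>0$ furnished by the reflected diffusion construction of Corollary \ref{cor-lambda_0>0}. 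The present lemma is just a clean packaging of those facts into a uniform-in-$r$ compact window $[K_{1},K_{2}]\subset (0,\infty)$ that will later be used to extract convergent subsequences of $(a_{r},b_{r})$ as $r\downarrow 0$ in the proof of Theorem \ref{thm-lta-main result}.
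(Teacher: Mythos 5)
Your proof is correct and takes exactly the route the paper intends: the paper offers no separate argument, saying only that the lemma ``can be obtained directly from Corollary \ref{cor-lambda_0>0} and Lemmas \ref{lem-ar>0} and \ref{lem-br<infty},'' which is precisely the chain you spell out (and your explicit construction of $K_1$, $K_2$, $r_0$ from the $\liminf$/$\limsup$ statements is sound). The one caveat --- present in the paper's own statement as well --- is that Corollary \ref{cor-lambda_0>0} requires Assumption \ref{assumption-pi_12}(ii), which the lemma's hypotheses do not formally list, so your remark that ``both parts are in force here'' holds only in the contexts where the lemma is actually applied, not under its stated hypotheses.
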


\begin{lemma}\label{Bounded Boundaries}
Let  Assumptions  \ref{assumption-pi_12}, \ref{assumption-V_r}, and  \ref{assumption-mu-sigma}  hold. Then  there exists a function $w_{r}\in C^{1}((0,\infty)) \cap C^{2}((0,\infty)\setminus \{a_{r}, b_{r}\})$ satisfying \begin{align}\label{e:w_r-equation}\begin{cases}
     \frac12 \sigma^{2}( x) w_{r}''(x) + (\sigma (x) \sigma'(x) + \mu(x) )   w_{r}'(x) - (r - \mu'(x) ) w_{r}(x) + h'(x) = 0, &  x\in (a_{r}, b_{r}); \\
     c_{1} \le w_{r}(x) \le  c_{2},  & x\in (a_{r}, b_{r}), \\
      w_{r} (x) = c_{2},  \ \   w'_{r} (x) = 0, &  x\le a_{r},
      \\  w_{r}(x) = c_{1}, \ \ w'_{r} (x) = 0, & x\ge b_{r}.
\end{cases}
\end{align}
\end{lemma}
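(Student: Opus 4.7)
The plan is to take $w_r := V_r'$ and verify that it satisfies every requirement. Under Assumption \ref{assumption-V_r}, $V_r \in C^2([0,\infty))$, so $w_r \in C^1$ at the outset, and the HJB system \eqref{e2:HJB-r} directly encodes most of what the lemma asks for. On $\{x\le a_r\}$, $V_r(x)=c_2 x + V_r(0+)$ gives $w_r\equiv c_2$ and $w_r'\equiv 0$; symmetrically, on $\{x\ge b_r\}$, $w_r\equiv c_1$ and $w_r'\equiv 0$. The gradient bound $c_1\le V_r'\le c_2$ on $(a_r,b_r)$ from \eqref{e2:HJB-r} yields $c_1\le w_r\le c_2$ there.

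Next I would derive the ODE by differentiating the interior PDE $r V_r(x) - \mu(x) V_r'(x) - \tfrac12 \sigma^2(x) V_r''(x) - h(x) = 0$ on $(a_r, b_r)$ once in $x$. This gives
\begin{equation*}
r V_r'(x) - \mu'(x) V_r'(x) - \mu(x) V_r''(x) - \sigma(x)\sigma'(x) V_r''(x) - \tfrac12 \sigma^2(x) V_r'''(x) - h'(x) = 0,
\end{equation*}
which, upon substituting $w_r = V_r'$, $w_r' = V_r''$, $w_r'' = V_r'''$ and rearranging, is exactly the first line of \eqref{e:w_r-equation}.

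For the regularity claim $w_r\in C^2((0,\infty)\setminus\{a_r,b_r\})$: on $(0,a_r)$ and $(b_r,\infty)$, $w_r$ is a constant and hence $C^\infty$. On $(a_r,b_r)$, I would bootstrap from the PDE by writing
\begin{equation*}
V_r''(x) = \frac{2\bigl(r V_r(x) - \mu(x) V_r'(x) - h(x)\bigr)}{\sigma^2(x)}.
\end{equation*}
By Assumption \ref{assumption-mu-sigma}, $\mu,\sigma,h\in C^1$ and $\sigma^2$ is bounded away from $0$ on $[a_r,b_r]$, so the right-hand side is $C^1$ in $x$; thus $V_r''\in C^1((a_r,b_r))$, i.e., $V_r\in C^3((a_r,b_r))$, and therefore $w_r=V_r'\in C^2((a_r,b_r))$.

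Finally, $w_r$ is globally $C^1$ because the smooth pasting conditions $V_r''(a_r)=0$ and $V_r''(b_r)=0$, which were already invoked in the proofs of Lemmas \ref{lem-ar>0} and \ref{lem-br<infty}, force $w_r'(a_r^\pm)=0=w_r'(b_r^\pm)$, matching the constant value $0$ coming from the outer regions. The main obstacle, if any, is just the $C^2$ interior regularity — everything else is a direct unpacking of Assumption \ref{assumption-V_r} — and that obstacle is dispatched by the one-derivative-of-smoothness provided in Assumption \ref{assumption-mu-sigma} through the PDE.
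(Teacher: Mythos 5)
Your proposal is correct and follows essentially the same route as the paper: set $w_r := V_r'$, read off the boundary values and gradient bounds from Assumption \ref{assumption-V_r}, and differentiate the interior ODE to obtain the first line of \eqref{e:w_r-equation}. The only addition is your bootstrap argument showing $V_r\in C^3((a_r,b_r))$ via the ODE and Assumption \ref{assumption-mu-sigma}, which the paper leaves implicit but which is needed to justify the differentiation, so it is a welcome extra detail rather than a deviation.
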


\begin{proof} Recall  Assumption \ref{assumption-V_r} indicates
that  $V_{r}$ of   \eqref{e-V_r(x) defn} satisfies
\begin{align}\label{e:v_r-ode}
\frac12 \sigma^{2} (x) V_{r}''(x) + \mu( x) V_{r}'(x) - r V_{r}(x) + h(x) = 0, \quad x\in (a_{r}, b_{r}),
\end{align} \begin{displaymath}
 c_{1} < V_{r}'(x) <  c_{2}, \quad x\in (a_{r}, b_{r}),
\end{displaymath}
and \begin{align*}
 V_{r}'(x) = c_{2},  \text{ for } x\le a_{r},  \ \     V_{r}'(x) = c_{1},   \text{ for }x\ge b_{r},  \text{ and }V_{r}''(x) =0 \text{ for }x \le a_{r} \text{ or }x \ge b_{r}.
\end{align*}  Now  differentiating \eqref{e:v_r-ode} and denoting   $w_{r}: = V'_{r}$,
then
$w_{r}$ satisfies \eqref{e:w_r-equation}.
\end{proof}


\comment{\begin{rem}
{\red In fact one can show that $w_{r}$ is the value of the Dynkin game with the reward functional $\Psi$ given by
\begin{displaymath}
\Psi_{r}(x; \theta, \tau): = \E_{x} \bigg[ \int_{0}^{\theta\wedge \tau} e^{-(r-\mu) s}Y(s) h_{x}(xY(s))\d s + c_{1} e^{-(r-\mu) \tau}Y(\tau) \1_{\{\tau< \theta \}} + c_{2} e^{-(r-\mu) \theta}Y(\theta)\1_{\{\tau> \theta \}} \bigg].
\end{displaymath}
where $Y(t) = e^{\mu t}M(t)$ and $M(t) = e^{-\frac{\sigma^2}{2}t + \sigma W(t)}$. Then \begin{displaymath}
w_{r}(x) = \sup_{\tau} \inf_{\theta}  \Psi_{r}(x; \theta, \tau) =  \inf_{\theta} \sup_{\tau}   \Psi_{r}(x; \theta, \tau).
\end{displaymath}}
\end{rem}}

\begin{lemma}\label{lem-w_0}
Let  Assumptions  \ref{assumption-pi_12}, \ref{assumption-V_r}, and  \ref{assumption-mu-sigma}  hold.  Then there exist two positive constants $a_{* } < b_{*}$,  a constant $l_{0}>0$, and a function $w_{0}\in C^{1}(0,\infty)$ satisfying  \begin{equation}\label{e:w_0-equats}
 \begin{cases}
    \frac12 \sigma^{2} (x) w_{0}'(x) +  \mu (x) w_{0}(x) + h(x) = l_0, & \quad  x\in (a_{*}, b_{*}),\\
 c_{1} \le  w_{0}(x) \le  c_{2}, & \quad  x\in (a_{*},  b_{*}),\\
w_{0} (x) = c_{2},  \ \   w'_{0} (x) = 0, & \quad  x\le a_{*},\\
 w_{0}(x) = c_{1}, \ \ w'_{0} (x) = 0, & \quad x\ge b_{*}.
\end{cases}
\end{equation}
\end{lemma}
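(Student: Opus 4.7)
The plan is to construct $(w_0, a_*, b_*, l_0)$ as a vanishing-discount limit of the quadruples $(w_r, a_r, b_r, rV_r)$ from Lemma \ref{Bounded Boundaries} and the preceding discussion. First I would extract, via Lemma \ref{lem-ar-br-bounded}, a sequence $r_n \downarrow 0$ along which $a_{r_n} \to a_*$ and $b_{r_n} \to b_*$ with $K_1 \le a_* \le b_* \le K_2$. The central observation is that \eqref{e:v_r-ode} can be rewritten as the first-order identity
\[
 \tfrac12\sigma^2(x)\, w_r'(x) + \mu(x) w_r(x) + h(x) = r V_r(x), \qquad x \in (a_r, b_r),
\]
so the desired equation for $w_0$ will arise simply by letting $r \downarrow 0$ and identifying the limit of $r V_r$ with a constant $l_0$.

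The bulk of the work is to set up the passage to the limit. Since $V_r' \in [c_1, c_2]$ by \eqref{e2:HJB-r}, one has $|V_r(x)-V_r(y)| \le c_2|x-y|$, hence $|r V_r(x) - r V_r(y)| \le r c_2 |x-y| \to 0$ uniformly on compacts; so any subsequential limit of $r_n V_{r_n}$ must be constant in $x$. Combined with the smooth-pasting identity $r V_r(b_r) = c_1\mu(b_r) + h(b_r)$ from \eqref{e:6.14GP}, this identifies the limit as $l_0 := c_1\mu(b_*) + h(b_*)$ and supplies a uniform bound on $r V_r$ over compacts. Plugging back into the displayed identity and using $c_1 \le w_r \le c_2$ from Lemma \ref{Bounded Boundaries} together with Assumption \ref{assumption-mu-sigma} (continuity of the coefficients and $\sigma^2$ bounded below on $[K_1, K_2]$), I obtain a uniform $L^\infty$ bound on $w_r'$ on $(a_r, b_r)$, hence a uniform Lipschitz bound on $w_r$ over $[0,\infty)$. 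Arzel\`a--Ascoli then produces a further subsequence along which $w_{r_n} \to w_0$ uniformly on compacts with $w_0$ continuous; passing to the limit in the first-order identity gives the ODE $\tfrac12\sigma^2 w_0' + \mu w_0 + h = l_0$ on $(a_*, b_*)$, while uniform convergence yields $w_0 \equiv c_2$ on $[0,a_*]$, $w_0 \equiv c_1$ on $[b_*,\infty)$, and $c_1 \le w_0 \le c_2$ throughout.

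For the $C^1$ matching at the free boundaries I would use both smooth-pasting identities in the limit: \eqref{e:6.13GP} yields $l_0 = c_2\mu(a_*) + h(a_*)$ (the Lipschitz bound on $V_r$ also forces $r_n V_{r_n}(0+) \to l_0$), while \eqref{e:6.14GP} yields $l_0 = c_1\mu(b_*) + h(b_*)$. Substituting these into the limit ODE evaluated at $a_*+$ and $b_*-$ forces $w_0'(a_*+) = w_0'(b_*-) = 0$, which matches the exterior values $w_0' \equiv 0$ and gives $w_0 \in C^1((0,\infty))$. Positivity $l_0 > 0$ then follows from $l_0 \ge \lambda_0 > 0$ by Proposition \ref{prop1:Abel-limit} and Corollary \ref{cor-lambda_0>0}.

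The main obstacle I anticipate is ruling out the degenerate case $a_* = b_*$, for which the ODE would be vacuous and the construction would collapse. This is where the uniform (as opposed to merely pointwise) convergence of $w_{r_n}$ becomes essential: continuity of the limit $w_0$ at a putative coincidence point would force $c_2 = \lim_{x \uparrow a_*} w_0(x) = \lim_{x \downarrow b_*} w_0(x) = c_1$, contradicting $c_1 < c_2$. Hence $a_* < b_*$ strictly, and the rest of the argument goes through as sketched.
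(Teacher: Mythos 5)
Your proof is correct, and it follows the same overall strategy as the paper: extract convergent subsequences $a_{r_n}\to a_*$, $b_{r_n}\to b_*$ via Lemma \ref{lem-ar-br-bounded}, obtain compactness of $\{w_r\}$, identify the limiting constant $l_0$ through the smooth-pasting identities \eqref{e:6.13GP}--\eqref{e:6.14GP}, and invoke Proposition \ref{prop1:Abel-limit} together with Corollary \ref{cor-lambda_0>0} for $l_0>0$. The one technical difference is how compactness is obtained. The paper differentiates the discounted HJB equation to get the second-order ODE \eqref{e:w_r-equation} for $w_r$, integrates it back to the identity \eqref{e:w_r-integral-eq}, bounds $w_r''$ uniformly, and applies Arzel\`a--Ascoli to both $w_r$ and $w_r'$. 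You instead work directly with the first-order identity $\tfrac12\sigma^2 w_r'+\mu w_r+h=rV_r$ on $(a_r,b_r)$ --- which, after substituting \eqref{e:6.13GP}, is literally the same equation as \eqref{e:w_r-integral-eq} --- bound only $w_r'$, apply Arzel\`a--Ascoli to $w_r$ alone, and recover the locally uniform convergence of $w_{r_n}'$ from the identity itself (since $rV_r\to l_0$ locally uniformly by the Lipschitz bound $|rV_r(x)-rV_r(y)|\le rc_2|x-y|$). This is slightly leaner: it avoids the second derivative bound and, strictly speaking, does not require the differentiability of $\mu$, $\sigma$, $h$ beyond what Assumption \ref{assumption-V_r} already provides. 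Do state explicitly the standard fact you are using at the limit step (if $f_n\to f$ pointwise and $f_n'\to g$ locally uniformly, then $f\in C^1$ with $f'=g$); as written, ``passing to the limit in the first-order identity'' glosses over why $w_0$ is differentiable. Finally, your continuity argument ruling out $a_*=b_*$ is a genuine improvement: the paper writes $a_*\le b_*$ after extracting the subsequence and never explicitly excludes coincidence, whereas your observation that $w_0(a_*)=c_2$ and $w_0(b_*)=c_1$ with $c_1<c_2$ forces $a_*<b_*$ closes that gap cleanly.
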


\begin{proof} Thanks to \eqref{e:w_r-equation},   $w_{r}$ is uniformly bounded.
Next we  integrate the first equation of \eqref{e:w_r-equation} from $a_{r}$ to $x$ ($x\in (a_{r}, b_{r})$) to obtain
\begin{equation}\label{e:w_r-integral-eq}
\frac12  \sigma^{2} (x) w_{r}'(x) =  c_{2}\mu (a_{r}) + h(a_{r})  - \mu (x) w_{r}(x) + r \int_{a_{r}}^{x} w_{r}(y) \d y - h(x).
\end{equation}
Thanks to Lemma \ref{lem-ar-br-bounded}  and the continuity of $h$ and $\mu$,
the right-hand side of \eqref{e:w_r-integral-eq} is uniformly bounded on $[K_{1}, K_{2}]$  for all $r\in (0, r_{0}]$, where $r_{0}, K_{1}$, and $ K_{2}$ are the positive constants found in Lemma \ref{lem-ar-br-bounded}. This, together with Assumption \ref{assumption-mu-sigma}, implies that   $\{w_{r}'(x), r\in (0, r_{0}] \}$  is uniformly bounded on $[K_{1}, K_{2}]$. Consequently,   $\{w_{r}(x), r\in (0, r_{0}] \}$ is equicontinuous.

 Now we rewrite   the first equation of \eqref{e:w_r-equation} as
\begin{displaymath}
  \frac12 \sigma^{2} (x) w_{r}''(x) = -(\sigma (x) \sigma'(x) + \mu(x) )   w_{r}'(x) + (r - \mu'(x) ) w_{r}(x) - h'(x),  
   \quad x\in (a_{r}, b_{r}).
\end{displaymath} Since $\{w_{r}'\}$ and $\{w_{r}\}$ are uniformly bounded and $\sigma, \sigma'$, $\mu'$, and  $h'$ are  continuous, it follows from Assumption \ref{assumption-mu-sigma} that there exists a positive constant $K$ {\em independent of} $r$ such that
 \begin{displaymath}
|w_{r}''(x)| \le K, \text{ for all }x\in (a_{r}, b_{r}) \subset [K_{1}, K_{2}], \text{ for any } r\in (0, r_{0}].
\end{displaymath} This, together with the fact that $w'_{r}(x)  =0$ for all $x\in [K_{1}, K_{2}]\setminus (a_{r}, b_{r})$ (c.f. \eqref{e:w_r-equation}), implies that 
  $\{w_{r}', r\in (0, r_{0}]  \}$ is equicontinuous on $[K_{1}, K_{2}]$.
  \comment{For any $\e > 0$, let $\delta: = \frac{\e}{K}$. For  any $ r\in (0, r_{0}] $ and  $x< y \in [K_{1}, K_{2}]$ with $|x-y| < \delta$, we show that $|w_{r}'(x) - w_{r}'(y) |  < \e$ in  the following  cases:
  \begin{itemize}
  \item[(i)] $x< y \le a_{r}$. In this case, by \eqref{e:w_r-equation}, $|w_{r}'(x) - w_{r}'(y) | = | 0 - 0| < \e. $
  \item[(ii)] $b_{r} \le x < y$.  This is similar to case (i) since $w_{r}'(x) = w_{r}'(y) = 0$.
  \item[(iii)] $x\le a_{r} < y < b_{r}$. In this case, using \eqref{e:w_r-equation},  we have $|w_{r}'(x) - w_{r}'(y) | = |w_{r}'(a_{r}) - w_{r}'(y) | \le K|y-a_{r}| \le K|y-x| <\e$.
   \item[(iv)] $x\le a_{r} < b_{r} \le y$. Similar to the previous case,  we  use \eqref{e:w_r-equation} to derive $|w_{r}'(x) - w_{r}'(y) | = |w_{r}'(a_{r}) - w_{r}'(b_{r}) | \le K|b_{r}-a_{r}| \le K|y-x| <\e$.
   \item[(v)] $a_{r} < x < y < b_{r}$. Again, it is obvious that   $|w_{r}'(x) - w_{r}'(y) | \le  K|y-x| <\e$.
   \item[(vi)] $a_{r} < x  < b_{r} \le y$. As before, we have  $|w_{r}'(x) - w_{r}'(y) | =  |w_{r}'(x) - w_{r}'(b_{r}) | \le K|b_{r} -x| \le K|x-y| < \e$.
\end{itemize}
Combining the cases above, we arrive at the desired assertion that  $\{w_{r}', r\in (0, r_{0}]  \}$ is equicontinuous on $[K_{1}, K_{2}]$.}

Meanwhile,  Lemma \ref{lem-ar-br-bounded}    implies that there exists a sequence $\{r_{n}\}_{n\geq 1}$ such that $\lim_{n\to \infty}r_{n} = 0$ for which
\begin{equation}\label{e:a_rb_r-limit}
\lim\limits_{n\to \infty}a_{r_{n}} = a_{*}, \ \quad \
\lim\limits_{n\to \infty}b_{r_{n}} = b_{*}
\end{equation}
for some $0<K_1 \leq a_{*} \leq b_{*} \leq K_2 < \infty$.  Since $\mu$ and $h$ are continuous, we have $$\lim_{n\to\infty}( c_{2}\mu ( a_{r_{n}}) + h(a_{r_{n}}) ) =c_{2} \mu(  a_{*}) + h(a_{*}) =: l_{0}.$$ Recall that
\eqref{e:6.13GP} indicates
$r V_{r}(0+) = h(a_{r}) + c_{2} \mu (a_{r})-r  c_{2} a_{r}$. 
Thus $l_{0}$ is a limiting point of  $\{rV_r(0+)\}$. This, together with Proposition \ref{prop1:Abel-limit}, implies that $l_{0}\ge \lambda_{0}$.

Since $\{w_{r_{n}}\}$ and $\{w'_{r_{n}}\}$ are equicontinuous and uniformly bounded  sequences  on $[K_1, K_2]$,  by the Arzel\`{a}-Ascoli theorem, there exists a continuously differentiable function $w_0$ on $[K_1,K_2]$ such that for some subsequence of $\{r_{n}\}$ (still denoted by $\{r_{n}\}$), we have
\begin{eqnarray}\label{e:w_r-limit}
\lim\limits_{n\to \infty}w_{r_{n}}(x) = w_0(x)\ \quad \
\lim\limits_{n\to \infty}w'_{r_{n}}(x) = w_0'(x).
\end{eqnarray}
Passing to the limit along the subsequence $\{r_{n}\}$  in \eqref{e:w_r-integral-eq} and noting that $w_{r_{n}}$ is uniformly bounded, we obtain from \eqref{e:a_rb_r-limit} and \eqref{e:w_r-limit} that
\begin{displaymath}
\frac12 \sigma^{2}(x) w_{0}'(x) + \mu (x) w_{0}(x)   + h(x) =l_{0}, \quad x\in (a_{*},b_{*}).
\end{displaymath} Since $\{w_{r_{n}}\}$ and $\{w'_{r_{n}}\}$ are equicontinuous, we can extend $w_{0}$ to $(0,\infty)$ so that $w_{0}$ is continuously differentiable and that the third and fourth lines of \eqref{e:w_0-equats} are satisfied. That $ c_{1} \le  w_{0}(x) \le  c_{2} $ for all $  x\in (a_{*},  b_{*})$ is obvious. This completes the proof of the lemma.
\end{proof}

We are now ready to prove  Theorem \ref{thm-lta-main result}.

\begin{proof}[Proof of Theorem \ref{thm-lta-main result}]
Let the positive constants $l_{0}$ and $a_{*} < b_{*}$ and the function $w_{0}$ be as in the statement of Lemma \ref{lem-w_0}. Define the function $Q$ by
\begin{eqnarray*}
Q(x) = \int_{0}^{x}w_{0}(u)\d u, \quad x\in (0,\infty).
\end{eqnarray*} Since $w_{0}$ is positive and satisfies \eqref{e:w_0-equats}, it is easy to see that $Q$ is nonnegative and satisfies
\begin{equation}
\label{e:Q(x)qvi}
\begin{cases}
   \frac12 \sigma^{2}( x) Q''(x) + \mu (x) Q'(x) + h(x) = l_{0}, \  \ & x\in (a_{*},b_{*}),    \\
  Q'(x) = c_{2}, \ \     &  x\le a_{*},\\
  Q'(x) = c_{1}, \ \     &  x\ge b_{*}.
\end{cases}
\end{equation}

 Without loss of generality, we assume that $x$ is in the interval $[a_{*},b_{*}]$. Let $X_{*}$ be the reflected diffusion process on the interval $[a_{*}, b_{*}]$ with $X_{*}(0) =x$; that is,
 \begin{displaymath}
X_{*}(t) = x + \int_{0}^{t} \mu (X_{*}(s)) \d s + \int_{0}^{t} \sigma( X_{*}(s)) \d W(s) + L_{a_{*}}(t) - L_{b_{*}}(t),
\end{displaymath}  where $L_{a_{*}} $ and $L_{b_{*}}$ denote the local time processes at $a_{*}$ and $b_{*}$, respectively. Thanks to Lemma \ref{lem:(LaLb)admissible},  $L_{a_{*}}-L_{b_{*}} \in \A_{x}$ and $X_{*}$ is an admissible process.
\comment{Denote by $L_{a^*}$ and $L_{b^*}$ the local-time processes of $X$ at the points $a^*$ and $b^*$, respectively.  Consider the reflected diffusion process $X$ on the interval $[a^*,b^*]$ given by
\begin{eqnarray}
X(t) = x + \int_{0}^{t}\mu(X(s))ds + \int_{0}^{t}\sigma(X(s))ds + L_{a^*}(t) - L_{b^*}(t)
\end{eqnarray}}
We now apply  It\^o's formula to $Q(X_{*}(T))$ and use \eqref{e:Q(x)qvi}  to obtain 
\begin{align*}
\mathbb{E}_{x}\left[Q(X_{*}(T))\right] &= Q(x) + l_{0} T + \mathbb{E}_{x}\bigg[-\int_{0}^{T} h (X_{*}(s))\d s + c_{2}  L_{a_{*}}(t) -c_{1} L_{b_{*}}(t) \bigg].
\end{align*}	
Rearranging
terms,   dividing both sides by $T$, and then letting $T\to\infty$, we obtain \begin{displaymath}
\lim_{T \to \infty}\frac{1}{T}\mathbb{E}_{x}\bigg[\int_{0}^{T}h(X(s))ds + c_1L_{b_{*}}(T) - c_2L_{a _{*}}(T) \bigg] =  l_0.
\end{displaymath}
This implies that $l_{0}\le \lambda_{0}$. Recall we have observed in the proof of Lemma \ref{lem-w_0}  that $l_{0}\ge \lambda_{0}$. Hence we conclude that  $l_{0}= \lambda_{0}$ and
$\vphi_{*} = L_{a_{*}} - L_{b_{*}}$ is an optimal policy.

Recall from the proof of Lemma \ref{lem-w_0} that $l_{0}$ is a limiting point of $\{r V_{r}(0+)\}$. Since $\lambda_{0} = l_{0}$, it follows that any limiting point of $\{r V_{r}(0+)\}$ is equal to $\lambda_{0}$. Moreover, for any $x \in (0,\infty)$, we have \begin{align*}
 \lim_{r\downarrow 0} r V_{r}(x) & = \lim_{r\downarrow0} r (V_{r}(x) - V_{r}(0+)) + \lim_{r\downarrow 0} r V_{r}(0+) \\&  = \lim_{r\downarrow0} r w_{r} (\theta x) x + \lambda_{0}     \\
  & = \lambda_{0},
\end{align*}  where $\theta \in [0,1]$. Note that we used the fact that $w_{r}$ is uniformly bounded to obtain the last equality.  The proof is complete.
\end{proof}

\section{The Ces\`aro Limit}\label{sect-cesaro limit}
	
This section establishes the Ces\`aro limit $\lim_{T\to \infty}\frac{1}{T}V_{T}(x) = \lambda_{0}$, where $V_{T}(x)$ is the value function for the finite-horizon problem defined in \eqref{e-V_T(x) defn}.

\begin{theorem}\label{thm-cesaro-limit}
	 Let  Assumptions  \ref{assumption-pi_12}, \ref{assumption-V_r}, and  \ref{assumption-mu-sigma}  hold. Then 	we have
			\begin{equation}\label{e:1/T V_T -> lambda_0}
			\lim_{T\to \infty}\frac{1}{T}V_{T}(x) = \lambda_{0}.
			\end{equation}
		\end{theorem}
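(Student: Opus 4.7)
My plan is to sandwich $V_{T}(x)/T$ between matching bounds. For the lower bound $\liminf_{T\to\infty}V_{T}(x)/T\ge\lambda_{0}$, I plug the optimal ergodic policy $\varphi_{*}=L_{a_{*}}-L_{b_{*}}$ from Theorem \ref{thm-lta-main result} into the finite-horizon criterion \eqref{e-V_T(x) defn}. Since $\varphi_{*}$ is admissible by Lemma \ref{lem:(LaLb)admissible}, possibly supplemented by an initial jump of size at most $|x-a_{*}|\vee|x-b_{*}|$ when $x\notin[a_{*},b_{*}]$, this yields
\begin{displaymath}
V_{T}(x)\ge\E_{x}\!\left[\int_{0}^{T}h(X_{*}(s))\d s+c_{1}L_{b_{*}}(T)-c_{2}L_{a_{*}}(T)\right]-C
\end{displaymath}
for a $T$-independent constant $C$, and the computation in the proof of Theorem \ref{thm-lta-main result} shows that the expectation divided by $T$ converges to $\lambda_{0}$.

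\textbf{Upper bound via a global HJB supersolution.} The crux is to upgrade the function $Q(x)=\int_{0}^{x}w_{0}(u)\d u$ produced in Lemma \ref{lem-w_0} from the piecewise relation \eqref{e:Q(x)qvi} to a nonnegative global supersolution of \eqref{e:HJB-lta} with $\lambda=\lambda_{0}$. Since $c_{1}\le Q'\le c_{2}$ and $\LL Q+h=\lambda_{0}$ on $(a_{*},b_{*})$, the only missing piece is $\LL Q+h\le\lambda_{0}$ on $[0,a_{*}]\cup[b_{*},\infty)$. For $x<a_{*}$, $Q$ is affine there with slope $c_{2}$, so $\LL Q(x)+h(x)=c_{2}\mu(x)+h(x)$; for $k$ large along the subsequence $\{r_{k}\}$ from the proof of Lemma \ref{lem-w_0} we have $x<a_{r_{k}}$, so the outer HJB inequality in Assumption \ref{assumption-V_r} gives $c_{2}\mu(x)+h(x)\le r_{k}V_{r_{k}}(x)$, and the Abelian limit $r_{k}V_{r_{k}}(x)\to\lambda_{0}$ from Theorem \ref{thm-lta-main result} then delivers $\LL Q(x)+h(x)\le\lambda_{0}$. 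Continuity extends the inequality to $x=a_{*}$, and the symmetric argument using $c_{1}\mu(x)+h(x)$ and $x>b_{r_{k}}$ handles $[b_{*},\infty)$.

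\textbf{Verification step and main obstacles.} With the supersolution property in hand, I repeat the It\^o-and-localize computation from the proof of Theorem \ref{thm-verification-lta}, using $\beta_{n}=\inf\{t:X(t)\ge n\}$, the bound $c_{1}\le Q'\le c_{2}$ to dominate the absolutely-continuous and jump contributions of $\xi,\eta$, and the supersolution inequality $\LL Q\le\lambda_{0}-h$, to obtain for any admissible $\varphi$
\begin{displaymath}
\E_{x}\!\left[\int_{0}^{T\wedge\beta_{n}}h(X(s))\d s+c_{1}\eta(T\wedge\beta_{n})-c_{2}\xi(T\wedge\beta_{n})\right]+\E_{x}[Q(X(T\wedge\beta_{n}))]\le Q(x)+\lambda_{0}\E_{x}[T\wedge\beta_{n}].
\end{displaymath}
Passing $n\to\infty$ (monotone convergence on the $h$ and $\eta$ terms, dominated convergence on the $\xi$ term via the admissibility bound in \eqref{e:set A defn}, and dropping $\E_{x}[Q(X(T))]\ge 0$) yields $V_{T}(x)\le Q(x)+\lambda_{0}T$, and dividing by $T$ gives $\limsup_{T\to\infty}V_{T}(x)/T\le\lambda_{0}$. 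The main obstacle is the supersolution verification outside $(a_{*},b_{*})$: the boundary convergence $a_{r_{k}}\to a_{*}$, $b_{r_{k}}\to b_{*}$ is a priori only subsequential, but this is rescued precisely because the Abelian limit in Theorem \ref{thm-lta-main result} holds at every fixed $x$. A minor technical point is that $Q$ is only $C^{1}$ with piecewise $C^{2}$ pieces; the standard It\^o formula still applies because $Q'$ is absolutely continuous with a bounded weak derivative, but a mollification-and-pass-to-the-limit argument would also dispose of this cleanly.
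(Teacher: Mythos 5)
Your proof is correct, but the upper bound is obtained by a genuinely different route from the paper's. For Step 2 the paper works directly with the discounted value function: it applies It\^o's formula to $e^{-rT}V_{r}(X(T))$, uses the HJB inequality \eqref{e:HJB-r} together with the lower bound $V_{r}(x)\ge V_{r}(0)+c_{1}x$, converts the discounted reward to the undiscounted one by integration by parts, controls the error term through the admissibility bound $\E_{x}[\xi(t)]\le K_{1}(x)t^{n}+K_{2}(x)$, and finally sets $r=\delta/T$ and takes the double limit $\delta\downarrow 0$, $T\to\infty$, invoking $\lim_{r\downarrow 0}rV_{r}(0)=\lambda_{0}$. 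You instead promote $Q=\int_{0}^{\cdot}w_{0}$ to a global supersolution of the ergodic HJB \eqref{e:HJB-lta} with $\lambda=\lambda_{0}$, checking the missing inequality $\LL Q+h\le\lambda_{0}$ outside $(a_{*},b_{*})$ by passing to the limit in the outer discounted HJB inequalities along $\{r_{k}\}$ (using that $c_{2}\mu(x)+h(x)\le r_{k}V_{r_{k}}(x)\to\lambda_{0}$ for $x<a_{r_{k}}$, and symmetrically above $b_{r_{k}}$); a single verification computation then yields the finite-$T$ bound $V_{T}(x)\le Q(x)+\lambda_{0}T$. This is sharper than what the paper extracts (an $O(1)$ error rather than a mere $\limsup$ statement) and avoids the integration-by-parts and double-limit machinery, at the cost of the extra supersolution verification. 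Two small remarks: since $w_{0}\in C^{1}((0,\infty))$ by Lemma \ref{lem-w_0}, $Q$ is already $C^{2}$, so your closing caveat about piecewise regularity and mollification is unnecessary; and your lower bound, which plugs in the exactly optimal reflection policy from Theorem \ref{thm-lta-main result}, is a harmless variant of the paper's Step~1, which uses an $\varepsilon$-optimal policy straight from the definition of $\lambda_{0}$.
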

		
\begin{proof} The proof is divided into two steps.

{\em Step 1.} We first prove \begin{equation}\label{liminf 1/TV_T(x) < lambda_0}
				\liminf\limits_{T\to \infty}\frac{1}{T} V_T(x) \geq \lambda_{0}.
				\end{equation}
Note that \eqref{liminf 1/TV_T(x) < lambda_0} is obviously true when $\lambda_{0} =0$. Now we consider the case when $\lambda_{0} > 0$ and let $K$ be a constant so that $K <\lambda_{0}$. By \eqref{def: lambda_0} there exits an admissible control $\vphi:=(\xi,\eta)\in \A_{x}$ so that
				\begin{equation*}\label{limsup 1/TE[] > K}
				\liminf_{T\to\infty} \frac{1}{T}\mathbb{E}_{x}\left[\int_{0}^{T} h(X(s))\d s +c_1\eta(T) - c_2\xi(T)\right] > K.
				\end{equation*}
	For any $T > 0$, 	by the definition of $V_{T}(x)$, we have $\mathbb{E}_{x}[\int_{0}^{T} h(X(s)) \d s +c_1\eta(T) - c_2\xi(T)]  \le V_{T}(x)$ and hence
				\begin{align*}
					K < \liminf_{T\to\infty} \frac{1}{T}\mathbb{E}_{x}\left[\int_{0}^{T} h(X(s)) \d s +c_1\eta(T) - c_2\xi(T)\right]
					\leq \liminf_{T\to\infty} \frac{1}{T}V_{T}(x).
				\end{align*}	Since $K < \lambda_{0}$ is arbitrary we conclude that $\lambda_{0} \leq \liminf_{T\to\infty} \frac{1}{T}V_{T}(x)$. This gives \eqref{liminf 1/TV_T(x) < lambda_0}.

{\em Step 2.}
We now show that \begin{equation}
 \label{limsup V_T(x)/T > lambda_0}
\limsup_{T\to\infty} \frac{1}{T}V_{T}(x) \le  \lambda_{0}.
\end{equation} To this end,  consider   any $\vphi \in \A_{x}$ for which $\E_{x}[\int_{0}^{T} h(X(s)) \d s + c_{1} \eta(T) - c_{2}\xi(T)] \ge (V_{T}(x) -1)\vee 0$, where  $X$ is the corresponding controlled process with $X(0-) =x \in (0, \infty)$.

Let $r> 0$ and consider the functions $V_{r}$ and   $w_{r}$  defined respectively in  \eqref{e-V_r(x) defn} and \eqref{e:w_r-equation}. Recall that $w_{r} = V_{r}'\in [c_{1}, c_{2}]$. Hence for any $x> 0$, we can write
\begin{align}\label{e:V_r-ineq}
V_{r}(x) = V_{r}(0) + \int_{0}^{x} w_{r}(y) \d y \ge  V_{r}(0) + c_{1} x.
\end{align}
We now apply It\^o's formula to the process $e^{-r T} V_{r}(X(T))$ to obtain
\begin{align*}
\E&_{x} [e^{-r(T\wedge \beta_{n})} V_{r}(X(T\wedge \beta_{n}))]  \\& = V_{r} (x)  + \E_{x}\bigg[ \int_{0}^{T\wedge \beta_{n}} e^{-rs} (-r V_{r} + \LL V_{r})  (X(s)) \d s + e^{-rs} V_{r}' (X(s)) (\d \xi^{c} (s) - \d \eta^{c} (s))\bigg]  \\
  & \qquad + \E_{x}\Bigg[\sum_{0\leq s \leq T\wedge \beta_{n}} e^{-rs}\left[V_{r} (X(s)) -  V_{r} (X(s-))\right]\Bigg],
\end{align*}	where $\beta_{n}: = \inf\{t\ge 0: X(t) \ge n\}$ and $n \in \N$.	Since $V_{r}$ satisfies the HJB equation \eqref{e:HJB-r}, we have
\begin{align*}
\E&_{x} [e^{-r(T\wedge \beta_{n})} V_{r}(X(T\wedge \beta_{n}))]     \le V_{r} (x)  + \E_{x}\bigg[ \int_{0}^{T\wedge \beta_{n}} e^{-rs} [-h(X(s)) \d s+ c_{2}\d\xi(s) - c_{1} \d\eta(s) ]\bigg].
\end{align*}
	Rearranging
terms and using \eqref{e:V_r-ineq} yield
\begin{align*}
 \E&_{x}\bigg[ \int_{0}^{T\wedge \beta_{n}} e^{-rs} [h(X(s))\d s- c_{2}\d\xi(s) + c_{1} \d\eta(s)] \bigg]\\ &  \le  V_{r} (x) - \E_{x} [e^{-r(T\wedge \beta_{n})} V_{r}(X(T\wedge \beta_{n}))]  \\&  \le   V_{r} (x)  -  \E_{x} [e^{-r (T\wedge \beta_{n})} V_{r}(0) +e^{-r(T\wedge \beta_{n})} c_{1} X(T\wedge \beta_{n})]\\
 & \le  V_{r} (x)  -  \E_{x} [e^{-r (T\wedge \beta_{n})} V_{r}(0) ]. 
\end{align*} Then it follows that 	
\begin{align}\label{e4.29}
 \E_{x} \bigg[ \int_{0}^{T } e^{-rs} (h(X(s))\d s - c_{2}\d\xi(s) + c_{1}\d \eta(s)) \bigg]    \le    V_{r} (x)  -   e^{-r T} V_{r}(0). 
\end{align} 
	Using integration by parts, we have
\begin{align*} &  e^{-r T} \E_{x}\bigg[\int_{0}^{T } h(X(s))\d s - c_{2}\xi(T) + c_{1}  \eta(T)\bigg]\\& \ \  =  \E_{x} \bigg[ \int_{0}^{T } e^{-rs} (h(X(s))\d s - c_{2}\d\xi(s) + c_{1}\d \eta(s)) \bigg]   \\  &\quad \ \
  - r\E_{x}\bigg[\int_{0}^{T}\!\! e^{-r t}\bigg[ \int_{0}^{t} h(X(s))\d s - c_{2}\xi(t) + c_{1}  \eta( t)\bigg]\d t\bigg].
\end{align*}
Plugging this observation into \eqref{e4.29}, we obtain
\begin{align*}
& e^{-r T} \E_{x}\bigg[\int_{0}^{T } h(X(s))\d s - c_{2}\xi(T) + c_{1}  \eta(T)\bigg] \\ &  \   \le  V_{r} (x)  -   e^{-r T} V_{r}(0)  
-  r\E_{x}\bigg[\int_{0}^{T}e^{-r t} \bigg[\int_{0}^{t} h(X(s))\d s - c_{2}\xi(t) + c_{1}  \eta( t)\bigg]\d t\bigg]
   \\ &   \  \le  V_{r} (x)  -   e^{-r T} V_{r}(0) + r c_{2} \int_{0}^{T}e^{-r t} \E[\xi (t) ] \d t\\
   & \ \le  V_{r} (x)  -   e^{-r T} V_{r}(0) + r c_{2} \int_{0}^{T}e^{-r t}  ( K_{1}(x) t^{n} +K_{2}(x)  )\d t\\
   & \ =  V_{r} (x)  -   e^{-r T} V_{r}(0) + \frac{c_{2}K_{1}(x) n!}{r^{n}}\bigg ( 1 - e^{-rT}\sum_{j=0}^{n} \frac{ (rT)^{j}}{j!} \bigg) + K_{2}(x)(1- e^{-r T})\\
   & \ = (1-   e^{-r T}) V_{r}(0) +  [ V_{r} (x) - V_{r}(0)]  +  \frac{c_{2}K_{1}(x)n!}{r^{n}}\bigg ( 1 - e^{-rT}\sum_{j=0}^{n} \frac{ (rT)^{j}}{j!} \bigg) + K_{2}(x)(1- e^{-r T}),
\end{align*} where the third last step follows from \eqref{e:set A defn}.
  Now we pick $r = \frac{\delta}{T}$ for some $\delta \in (0,1)$ and divide both sides by $T$ to obtain
  \begin{align*}
& e^{-\delta}  \frac1T\E_{x}\bigg[\int_{0}^{T } h(X(s))\d s - c_{2}\xi(T) + c_{1}  \eta(T)\bigg] \\& \le \frac{1-   e^{- \delta}}{\delta} r V_{r}(0) + \frac{V_{r} (x)  -   e^{-\delta} V_{r}(0) }{T} 
 + \frac{c_{2}K_{1}(x)n! T^{n-1}}{\delta^{n}} \bigg ( 1 - e^{-\delta}\sum_{j=0}^{n} \frac{ \delta^{j}}{j!} \bigg) + \frac{K_{2}(x)(1- e^{- \delta})}{T}.
\end{align*}
Since $\lim_{r\downarrow 0}  r V_{r}(0) = \lambda_{0}$ thanks to  Theorem \ref{thm-lta-main result}, we first  let $\delta \downarrow 0$ and then let $T \to \infty$ to obtain
\begin{displaymath}
\limsup_{T\to \infty} \frac1T\E_{x}\bigg[\int_{0}^{T } h(X(s))\d s - c_{2}\xi(T) + c_{1}  \eta(T)\bigg] \le  \lambda_{0}.
\end{displaymath} This is true for any $\vphi \in \A_{x}$ and hence it follows that $\limsup_{T\to \infty} \frac{V_{T}(x)}{ T} \le \lambda_{0}$; establishing \eqref{limsup V_T(x)/T > lambda_0}. The proof is complete.
\end{proof}

	\section{Geometric Brownian Motion Models}\label{sect-gbm}
	 In this section, we apply the vanishing discount method to study   a two-sided singular control problem when the underlying process is a geometric Brownian motion. That is, we consider	 the following long-term average  singular control problem
	 \begin{equation}
	\label{def:lambda_0-gbm} \begin{cases}
	 \lambda_{0} : =\displaystyle\sup_{\vphi(\cdot)\in \A_{x}} \liminf_{T\to\infty}\frac1T\E_{x}\bigg[\int_{0}^{T} h(X(s) ) \d s +c_{1} \eta(T)- c_{2}    \xi(T) \bigg],\\[2ex]
	 \text{subject to }\begin{cases}\d X(s)= \mu X(s)\d s +\sigma X(s)\d W(s) +\d\xi(s) - \d\eta(s), \quad s\ge 0,\\
	X(0-)=x \ge 0, \end{cases}
 \end{cases}
	\end{equation}
	 where $\mu < 0$,  $\sigma > 0$,  $0 < c_{1} < c_{2}$ are   constants, $h$ is a nonnegative function satisfying Assumption \ref{h-assumption} below, and the singular control $\vphi = \xi -\eta$ is admissible as in Section \ref{sect-formulation}.
	
	  In problem \eqref{def:lambda_0-gbm}, the underlying uncontrolled process is a geometric Brownian motion with state space $(0, \infty)$. Using the criteria in Chapter 15 of \cite{KarlinT81}, we see that both 0 and $\infty$ are natural boundary points. Moreover, the scale and speed densities are     respectively given by $s(x)=x^{-\frac{2\mu}{\sigma^{2}}} $ and $m(x)= \frac{1}{\sigma^{2}} x^{\frac{2\mu}{\sigma^{2}}-2}$, $x> 0$.

\begin{rem}
Let us explain why we need to assume $\mu < 0$ in \eqref{def:lambda_0-gbm}.
	Suppose  that $\mu > 0$, then one can show that $\lambda_{0}=\infty$. Indeed, for any $x > 0$  and any sequence $t_{k}\to\infty$ as $k\to\infty$, we can construct an admissible policy $\vphi\cd=(\xi\cd,\eta\cd)$ as follows. Let $\xi(t)\equiv 0$ for all $t\ge 0$,   $\eta(t) = 0 $ for $t< t_{k}$ and $\eta(t) \equiv \Delta \eta(t_{k}) = X_{0}(t_{k}):=  x \exp\{(\mu -\frac12 \sigma^{2})t_{k} + \sigma W(t_{k}) \}$ for all $t\ge t_{k}$.   In other words, under the policy $\vphi\cd$, the manager does nothing before  time $t_{k}$ and then harvests all at time $t_{k}$.   Since  $X_{0}(t_{k}) = x \exp\{(\mu -\frac12 \sigma^{2})t_{k} + \sigma W(t_{k}) \}$,  we have
	\begin{displaymath}
	\lambda_{0} \ge \lim_{k\to\infty} \frac{1}{t_{k}} \E_{x}[c_{1} \eta(t_{k})] = \lim_{k\to\infty} \frac{1}{t_{k}} \E_{x}[c_{1} X_{0}(t_{k})]
	= \lim_{k\to\infty} \frac{1}{t_{k}} c_{1} x e^{\mu t_{k}} = \infty.
	\end{displaymath}

When $\mu =0$, in the presence of Assumption \ref{h-assumption} below,  we no longer have $\lim_{x\to\infty} (h(x) +c_{1} \mu x )  <0 $, thus violating Assumption \ref{assumption-pi_12} (i). Consequently  the vanishing discount method is not applicable.  
We will present in  Section \ref{sec-mu=0}   a case study  which indicates for the Cobb Douglas function $h(x) = x^{p}, p\in (0,1)$, the optimal long-term average reward can be arbitrarily large. 	
\end{rem}

We need the following assumption throughout this section:
\begin{assumption}\label{h-assumption}
	The function $h$ is nonnegative 
	 and satisfies the following conditions:
		\begin{itemize}
			\item[(i)] $h$ is strictly concave, continuously differentiable and nondecreasing on $[0,\infty)$, and satisfies $h(0)=0$; 
			\item[(ii)] $h$ has a finite Legendre transform on $(0,\infty )$; that is,   for all $z > 0$
			\begin{equation}\label{Legendre transform}
			\Phi_{h}(z) : = \sup_{x\geq 0}\left\{ h( x) - z x\right\}< \infty.
			\end{equation}
		\item[(iii)] $h$ satisfies the Inada condition at 0, i.e., \begin{equation}\label{e:h-Inada}\lim_{x\downarrow 0} \frac{h(x)}{x} = \infty.\end{equation}
		\end{itemize}
	\end{assumption}
	
\begin{rem}\label{rem-h-assumption}
Assumption \ref{h-assumption} is common in the finance literature; see, for example, \cite{GuoP-05} and \cite{DeAngelisF-14}. The Inada condition at 0 implies that the right derivative $h'(0+)$ of the  function $h$ is infinity. If this condition is violated, say, there   exists some $c\in [c_{1}, c_{2}]$ such that \begin{equation}\label{e:h'small}h'(0+) + \mu c \le 0,\end{equation} then one can show that  $\lambda_{0} =0$. 

Indeed, in the presence of  \eqref{e:h'small}, we can immediately verify  that the function $u(x) = c x$ and $\lambda =0$ satisfy \eqref{e:HJB-lta}. Consequently it follows from Theorem \ref{thm-verification-lta} that $\lambda_{0} =0$. Moreover, we can construct an optimal policy in the following way. Suppose $X(0-) = x \ge 0$. Let $\xi(t) \equiv 0$ and $\eta(t) \equiv \Delta\eta(0) = x$; that is, the policy harvests all and brings the state to 0 at time 0. The long-term average reward for such a policy is 0.

\end{rem}


	The main result of this section is presented next.

\begin{theorem}
Under Assumption \ref{h-assumption},
 there exist $0 < a_{*} < b_{*} < \infty$ such that the reflected geometric Brownian motion on the interval $[a_{*}, b_{*}]$ is an optimal state process $($if the initial point is outside this interval, then there will be an initial jump to the nearest point of the interval$)$. In other words, the optimal value $\lambda_{0}$ of \eqref{def:lambda_0-gbm} is achieved by  $\vphi_{*}(\cdot): = L_{a_{*}}(\cdot)- L_{b_{*}}(\cdot)$, in which $ L_{a_{*}}(\cdot)$ and $  L_{b_{*}}(\cdot)$ denote respectively the local times of $X$ at $a_{*}$ and $b_{*}$. Moreover, the Abelian and the Ces\`aro limits \eqref{e:abel-cesaro limits}  hold.
 \end{theorem}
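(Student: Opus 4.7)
The plan is to apply Theorem \ref{thm-main result}, which requires verifying Assumptions \ref{assumption-pi_12}, \ref{assumption-V_r}, and \ref{assumption-mu-sigma} for the geometric Brownian motion coefficients $\mu(x)=\mu x$, $\sigma(x)=\sigma x$. Assumption \ref{assumption-mu-sigma} is immediate since $\mu$, $\sigma$, and $h$ are smooth and $\inf_{x\in[a,b]}\sigma^{2}x^{2}=\sigma^{2}a^{2}>0$ on any $[a,b]\subset(0,\infty)$. Assumption \ref{assumption-pi_12}(i) follows from $h(0)=0$, giving the first limit equal to $0$, together with the Legendre transform hypothesis: the bound $h(x)\le zx+\Phi_{h}(z)$ valid for every $z>0$ forces $h(x)/x\to 0$ as $x\to\infty$, so that $h(x)+c_{1}\mu x = x(h(x)/x+c_{1}\mu)\to -\infty$ because $c_{1}\mu<0$.

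The main technical step is Assumption \ref{assumption-pi_12}(ii), and it is here that the Inada condition \eqref{e:h-Inada} enters decisively. Writing $\alpha := 2\mu/\sigma^{2}<0$, we have $s(x)=x^{-\alpha}$ and $m(x)=x^{\alpha-2}/\sigma^{2}$, so $1/s(a)=a^{\alpha}\to+\infty$ as $a\downarrow 0$; thus the negative term $-c_{2}/(2s(a))$ in \eqref{e:lam>0condition} blows up, and the integral term must dominate. Fix any moderate $b>0$. By the Inada condition, for any constant $M>0$ we can choose $\delta\in(0,b)$ so that $h(y)\ge M y$ on $(0,\delta]$; using $\int_{a}^{\delta} y^{\alpha-1}\d y=(a^{\alpha}-\delta^{\alpha})/|\alpha|$ together with $h\ge 0$ on $[\delta,b]$, we obtain
\begin{displaymath}
\int_{a}^{b} h(y)m(y)\d y+\frac{c_{1}}{2s(b)}-\frac{c_{2}}{2s(a)} \ \ge\ a^{\alpha}\left[\frac{M}{|\alpha|\sigma^{2}}-\frac{c_{2}}{2}\right]+O(1),
\end{displaymath}
where $O(1)$ is bounded in $a$. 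Choosing $M>|\alpha|\sigma^{2}c_{2}/2$ makes the bracketed constant strictly positive, and since $a^{\alpha}\to+\infty$ the right-hand side is positive for all sufficiently small $a$, yielding a pair $0<a<b<\infty$ satisfying \eqref{e:lam>0condition}.

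Finally, Assumption \ref{assumption-V_r} is exactly the content of the main structural result of \cite{GuoP-05}: under the concavity, monotonicity, and Legendre hypotheses of Assumption \ref{h-assumption}, the discounted value function for the GBM two-sided singular control problem is a classical $C^{2}$ solution of the discounted HJB equation and has the ``action/no-action/action'' structure \eqref{e2:HJB-r} with a unique pair of finite positive free boundaries $0<a_{r}<b_{r}<\infty$. With all three assumptions verified, Theorem \ref{thm-main result} delivers simultaneously the optimal reflection interval $[a_{*},b_{*}]$, the optimal policy $\varphi_{*}=L_{a_{*}}-L_{b_{*}}$, and the Abelian and Ces\`aro limits \eqref{e:abel-cesaro limits}. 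The only genuinely nontrivial analytical piece is the asymptotic comparison in the second paragraph; everything else is either elementary or a direct citation.
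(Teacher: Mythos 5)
Your proposal is correct and follows essentially the same route as the paper: verify Assumptions \ref{assumption-pi_12}, \ref{assumption-V_r}, and \ref{assumption-mu-sigma} and then invoke Theorem \ref{thm-main result}, with the Legendre transform handling the limit at infinity in Assumption \ref{assumption-pi_12}(i), the Inada condition driving the blow-up comparison that yields \eqref{e:lam>0condition}, and \cite{GuoP-05} supplying Assumption \ref{assumption-V_r}. The only differences from the paper's argument are cosmetic (a slightly different constant in the Inada estimate and phrasing the Legendre bound as $h(x)/x\to 0$ rather than fixing $z=-c_{1}\mu/2$).
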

 \begin{proof}
In view of Theorem \ref{thm-main result}, we only need to verify Assumptions \ref{assumption-pi_12}, \ref{assumption-V_r},  and  \ref{assumption-mu-sigma} hold.  Assumption   \ref{assumption-mu-sigma}  obviously holds. Under Assumption \ref{h-assumption}, it is established in \cite{GuoP-05} that there exist two positive numbers $a_{r} < b_{r}$ so that the discounted problem $V_{r}$  is a classical solution to \eqref{e2:HJB-r},  which verifies
Assumption \ref{assumption-V_r}.

 It remains to show Assumption \ref{assumption-pi_12} holds.  Since  $h(0) =0$,  we have $\lim_{x\downarrow 0} [h(x) + c_{2} \mu x] =0$. In view of \eqref{Legendre transform} and the fact that $\mu < 0$, we have $h(x) - ( \frac{-c_{1}\mu}{2} )x \le \Phi_{h}(\frac{-c_{1}\mu}{2} ) < \infty$. Consequently	it follows that \begin{displaymath}
\lim_{x\to\infty} [h(x) + c_{1}\mu x] \le \lim_{x\to\infty}  \bigg[\Phi_{h}\bigg(\frac{-c_{1}\mu}{2} \bigg) - \frac{c_{1}\mu}{2} x + c_{1}\mu x\bigg] =-\infty.
\end{displaymath}
Assumption  \ref{assumption-pi_12} (i) is therefore verified. To verify  Assumption  \ref{assumption-pi_12} (ii), we recall  that the scale and speed densities of the geometric Brownian motion are respectively given by $s(x)=x^{-\frac{2\mu}{\sigma^{2}}} $ and $m(x)= \frac{1}{\sigma^{2}} x^{\frac{2\mu}{\sigma^{2}}-2}$, $x> 0$. Next we  observe that condition \eqref{e:h-Inada} implies that there exists a positive constant $\delta$ so that $\frac{h(y)}{y} > -2\mu c_{2}$ for all $y\in (0, \delta)$. Now let $b > \delta > a$. Then  we have \begin{align*}
\frac{c_{1}}{2s(b)} -  \frac{c_{2}}{2s(a)} +  \int_{a}^{b}h(y) m(y) \d y   & \ge  -  \frac{c_{2}}{2s(a)} +  \int_{a}^{\delta}\frac{h(y)}{y} y \frac{1}{\sigma^{2}}y^{\frac{2\mu}{\sigma^{2}}-2} \d y    \\
  & >  -  \frac{c_{2}}{2s(a)} + \frac{1}{\sigma^{2}}  \int_{a}^{\delta}( -2\mu c_{2})  y^{\frac{2\mu}{\sigma^{2}}-1} \d y\\
  & =  -  \frac{c_{2}}{2s(a)} + c_{2} \bigg( \frac{1}{s(a)} -\frac{1}{s(\delta)}\bigg) \\
  & =  \frac{c_{2}}{2s(a)}- \frac{c_{2}}{s(\delta)} > 0,
\end{align*}
by choosing $a > 0$ sufficiently small so that $\frac{1}{s(a)} > \frac{2}{s(\delta)}$. This gives \eqref{e:lam>0condition}  and hence verifies Assumption  \ref{assumption-pi_12} (ii).
\end{proof}

\subsection{Case Study: $\mu=0$ and $h(x) = x^{p}, p\in (0, 1)$}\label{sec-mu=0}
In this subsection, we consider the problem \eqref{def:lambda_0-gbm} in which $\mu=0$ and $h(x) = x^{p}, p\in (0, 1)$. We will demonstrate via probabilistic arguments that $\lambda_{0} =\infty$. To this end, we first present the following lemma.

		\begin{lemma}\label{lem-mu=0}
		Suppose $\mu =0$ and $h(x) = x^{p}$ for some $p \in (0,1)$, then for all $\lambda > 0$ sufficiently large satisfying \begin{equation}
		\label{e:lambda-large}
		\frac{2p}{\sigma^{2}(1-p)} \lambda^{1-\frac1p} < \frac{c_{2}-c_{1}}{2} \quad \text{ and } \quad c_{2} \lambda^{\frac1p-1} > \frac{2}{\sigma^{2}p} \ln \lambda,
		\end{equation} the function \begin{equation}
		\label{e:u(x)defn}
		u(x) : =\begin{cases}\frac{2}{\sigma^{2} p (1-p)} x^{p} -\frac{2\lambda}{\sigma^{2}} \log x + C x, &\text{ if }x > a_{*},\\ c_{2}(x-a_{*}) + u(a_{*}), & \text{ if } 0\le x \le a_{*} \end{cases}
		\end{equation} is   twice continuously differentiable, nonnegative,  and satisfies the following system
		\begin{equation}\label{e:exm-qvi}
		\begin{cases}
		\LL u(x) + h(x)  - \lambda =0,      & \text{ if  } x > a_{*},\\
		c_{1} \le u'(x) \le c_{2},   & \text{ if }x > a_{*}, \\
		\LL u(x) + h(x)  - \lambda \le 0,      & \text{ if  } x \le a_{*},\\
		u'(x) = c_{2},   & \text{ if  } x \le a_{*}.
		\end{cases}
		\end{equation} Here $a_{*} = \lambda^{1/p}$ and $C= c_{2} -\frac{2p}{\sigma^{2} (1-p)} \lambda^{1-\frac1p}  $. \end{lemma}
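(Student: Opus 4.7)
The proof is essentially a verification exercise: the expression for $u$ on $(a_{*},\infty)$ is the ansatz solving the inhomogeneous ODE $\LL u + h = \lambda$ explicitly (i.e.\ $\tfrac12\sigma^{2}x^{2}u''(x) + x^{p} = \lambda$, since $\mu=0$), while on $[0,a_{*}]$ the function is simply the tangent line with slope $c_{2}$ extended from $(a_{*},u(a_{*}))$. The breakpoint $a_{*}=\lambda^{1/p}$ and the constant $C=c_{2}-\tfrac{2p}{\sigma^{2}(1-p)}\lambda^{1-1/p}$ have been chosen precisely so that the two pieces glue together in $C^{2}$, and the proof proceeds by checking each assertion of \eqref{e:exm-qvi} one at a time.

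First I would differentiate the formula on $(a_{*},\infty)$ to obtain
\[
u'(x) = \frac{2}{\sigma^{2}(1-p)}x^{p-1} - \frac{2\lambda}{\sigma^{2}x} + C, \qquad u''(x) = \frac{2}{\sigma^{2}x^{2}}\bigl(\lambda - x^{p}\bigr),
\]
and then evaluate at $x=a_{*}=\lambda^{1/p}$. The identity $a_{*}^{p}=\lambda$ kills the bracket in $u''$, giving $u''(a_{*})=0$, and a short algebraic simplification gives $u'(a_{*}) = \tfrac{2p}{\sigma^{2}(1-p)}\lambda^{1-1/p} + C$, which the choice of $C$ equates to $c_{2}$. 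Since the linear piece has constant slope $c_{2}$ and vanishing second derivative, $u\in C^{2}([0,\infty))$. The ODE on $(a_{*},\infty)$ then reduces to the identity $\LL u(x) = \tfrac{\sigma^{2}x^{2}}{2}u''(x) = \lambda - x^{p}$, from which $\LL u + h - \lambda = 0$ is automatic. On $[0,a_{*}]$, $u''\equiv 0$ makes $\LL u\equiv 0$, so $\LL u + h - \lambda = x^{p} - \lambda \le a_{*}^{p} - \lambda = 0$ by monotonicity of $h$; this takes care of the ODE and of the HJB inequality.

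For the gradient constraints $c_{1}\le u'\le c_{2}$, the only nontrivial region is $(a_{*},\infty)$, where the formula for $u''$ shows $u''(x)<0$ whenever $x^{p}>\lambda$, i.e.\ throughout $(a_{*},\infty)$. Thus $u'$ decreases strictly on $(a_{*},\infty)$ from $u'(a_{*})=c_{2}$ toward the limit $\lim_{x\to\infty}u'(x)=C$, and the first condition of \eqref{e:lambda-large} gives
\[
C \,=\, c_{2} - \frac{2p}{\sigma^{2}(1-p)}\lambda^{1-1/p} \,>\, c_{2} - \frac{c_{2}-c_{1}}{2} \,=\, \frac{c_{1}+c_{2}}{2} \,>\, c_{1},
\]
which bounds $u'$ from below.

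Finally, for nonnegativity, I would substitute $a_{*}=\lambda^{1/p}$ into the formula for $u(a_{*})$ and use $\log a_{*} = \tfrac{1}{p}\log\lambda$ to collapse the algebra to
\[
u(a_{*}) \,=\, c_{2}\lambda^{1/p} + \frac{2\lambda(1+p)}{\sigma^{2}p} - \frac{2\lambda}{\sigma^{2}p}\log\lambda.
\]
The second condition of \eqref{e:lambda-large} is exactly $c_{2}\lambda^{1/p} > \tfrac{2\lambda}{\sigma^{2}p}\log\lambda$, so $u(a_{*}) > \tfrac{2\lambda(1+p)}{\sigma^{2}p} > 0$. Since $u'\ge C > c_{1}>0$ on $(a_{*},\infty)$, $u$ is increasing there, and one extends positivity onto $(0,a_{*}]$ by combining the linear form $u(x)=c_{2}(x-a_{*})+u(a_{*})$ with the lower bound on $u(a_{*})$. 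The bookkeeping for nonnegativity near the origin is the most delicate part of the argument and is where the second condition in \eqref{e:lambda-large} has to be used sharply; the $C^{2}$ pasting and HJB verifications are straightforward once the identity $a_{*}^{p}=\lambda$ is exploited.
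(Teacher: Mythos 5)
Your verification follows the paper's proof essentially line for line: the same formulas for $u'$ and $u''$ on $(a_{*},\infty)$, the same use of $a_{*}^{p}=\lambda$ to get $u''(a_{*})=0$ and of the definition of $C$ to get $u'(a_{*})=c_{2}$ (hence the $C^{2}$ pasting), the same monotonicity argument $u''<0$ on $(a_{*},\infty)$ combined with $\lim_{x\to\infty}u'(x)=C>\tfrac{c_{1}+c_{2}}{2}>c_{1}$ from the first condition in \eqref{e:lambda-large}, the same inequality $h(x)-\lambda\le h(a_{*})-\lambda=0$ below $a_{*}$, and the same computation of $u(a_{*})$ with the second condition in \eqref{e:lambda-large} giving $u(a_{*})>\tfrac{2\lambda(1+p)}{\sigma^{2}p}>0$.

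The one step that does not go through is the one you yourself flag as delicate: extending nonnegativity to $(0,a_{*}]$. On the linear piece $u(x)=c_{2}(x-a_{*})+u(a_{*})$ one has
\[
u(0)=u(a_{*})-c_{2}\lambda^{1/p}=\frac{2\lambda}{\sigma^{2}p}\bigl(1+p-\ln\lambda\bigr),
\]
which is negative as soon as $\lambda>e^{1+p}$; since both conditions in \eqref{e:lambda-large} hold for \emph{all} sufficiently large $\lambda$ (and the lemma is invoked with $\lambda\to\infty$ in the subsequent proposition), your claimed extension of positivity down to the origin cannot be carried out --- the lower bound $u(a_{*})>\tfrac{2\lambda(1+p)}{\sigma^{2}p}$ is not enough to offset the subtraction of $c_{2}a_{*}=c_{2}\lambda^{1/p}$. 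To be fair, the paper's own proof makes exactly the same leap (``hence $u(x)>0$ for all $x>0$''), so the blanket nonnegativity claim is overstated in the source as well. The gap is harmless for the intended application: the controlled process in the following proposition is reflected at $a_{*}$ and never leaves $[a_{*},\infty)$, where $u$ is indeed positive because $u(a_{*})>0$ and $u'>c_{1}>0$ there. A clean fix is to assert nonnegativity only on $[a_{*},\infty)$ rather than to attempt the extension to the origin.
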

	
	\begin{proof}
		Note that condition \eqref{e:lambda-large} implies that $C> c_{2} - \frac{c_{2}-c_{1}}{2} = \frac{c_{2}+c_{1}}{2} > c_{1}$. The fact that  $u \in C^{1}((0,\infty)) \cap C^{2}((0,\infty)\setminus\{a_{*}\})$ is a direct consequence of the definition of $u$ in \eqref{e:u(x)defn}. Detailed calculations reveal that $\lim_{x\downarrow a_{*}} u''(x) = 0$ and hence $u \in C^{2}(0,\infty)$. In addition, the equalities in \eqref{e:exm-qvi} can be verified by direct computations. We next show the inequalities in \eqref{e:exm-qvi} hold as well.
		
		For $x< a_{*}$, we have
		\begin{align*}
		\LL u(x) + h(x)  - \lambda  &  = h(x) -\lambda \le h(a_{*}) -\lambda =0.
		\end{align*} For $x> a_{*}$, we have \begin{align*}
		&u'(x) = \frac{2}{\sigma^{2}(1-p)} x^{p-1} - \frac{2\lambda}{\sigma^{2}} x^{-{1}} + C,\\
		& u''(x) = -\frac{2}{\sigma^{2} } x^{p-2}+ \frac{2\lambda}{\sigma^{2}} x^{-2 } = \frac{2\lambda}{\sigma^{2}} x^{-2 }  (1- x^{p}) \le 0.
		\end{align*} These  imply  that $u'(x) \le u'(a_{*}) = c_{2}$ for all $x> a_{*}$. Also we notice that $\lim_{x\to\infty} u'(x) = C > c_{1}$. Hence it follows that $c_{1} < u'(x) \le c_{2}$ for all  $x> a_{*}$.
		
		Also observe that $u(a_{*}) = \frac{2}{\sigma^{2} p}(\frac{\lambda}{1-p} -\lambda \ln \lambda) + C \lambda^{\frac1p}= \frac{2}{\sigma^{2} (1-p)}(\frac{1}{p} - p) \lambda + (c_{2}\lambda^{\frac1p-1}- \frac{2}{\sigma^{2}p} \ln \lambda ) \lambda > 0$ thanks to the second inequality of \eqref{e:lambda-large}. Hence   $u(x) > 0$ for all $x>0$.  The proof is complete.
	\end{proof}
	
	\begin{prop}
		Suppose $\mu =0$ and $h(x) = x^{p}$ for some $p \in (0,1)$, then $\lambda_{0} =\infty$.
	\end{prop}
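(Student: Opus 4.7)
The plan is to apply Lemma \ref{lem-mu=0} to construct, for each sufficiently large $\lambda>0$, an admissible policy whose long-term average reward is at least $\lambda$; letting $\lambda\to\infty$ will then give the conclusion. First I would fix a large $\lambda$ satisfying \eqref{e:lambda-large}, set $a_*=\lambda^{1/p}$, and invoke Lemma \ref{lem-mu=0} for the corresponding $C^2$ function $u$. Next I would consider the one-sided reflection policy $\vphi_*=(L_{a_*},0)$, reflecting from below at $a_*$ with $\infty$ acting as a natural upper boundary; since the speed measure $M[a_*,\infty)=1/(\sigma^2 a_*)$ is finite, the reflected process is positive recurrent on $[a_*,\infty)$ with invariant measure $\pi(\d x)=(a_*/x^2)\,\d x$, and a minor variant of Lemma \ref{lem:(LaLb)admissible} shows that $\vphi_*\in\A_x$.

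Next, apply It\^o's formula to $u(X_*(T\wedge\beta_n))$, where $X_*$ is the controlled process under $\vphi_*$ and $\beta_n:=\inf\{t\ge 0:X_*(t)\ge n\}$. Using $\LL u+h=\lambda$ on $(a_*,\infty)$ (where $X_*$ lives) and $u'(a_*)=c_2$ at the sole active reflection point, taking expectations yields
\begin{equation*}
\E_x\!\left[\int_0^{T\wedge\beta_n}\!h(X_*(s))\,\d s-c_2\xi(T\wedge\beta_n)\right]=u(x)+\lambda\,\E_x[T\wedge\beta_n]-\E_x[u(X_*(T\wedge\beta_n))].
\end{equation*}
Passing $n\to\infty$ by monotone convergence on the left (using $h\ge 0$ and $\xi$ nondecreasing), then dividing by $T$ and sending $T\to\infty$, I obtain
\begin{equation*}
\liminf_{T\to\infty}\frac{1}{T}\E_x\!\left[\int_0^T h(X_*(s))\,\d s-c_2\xi(T)\right]=\lambda-\limsup_{T\to\infty}\frac{\E_x[u(X_*(T))]}{T}.
\end{equation*}
Provided $\E_x[u(X_*(T))]/T\to 0$, the long-term average reward of $\vphi_*$ equals $\lambda$, so $\lambda_0\ge\lambda$. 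Since the conditions of Lemma \ref{lem-mu=0} hold for all sufficiently large $\lambda$, letting $\lambda\to\infty$ yields $\lambda_0=\infty$.

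The main obstacle is the justification of $\E_x[u(X_*(T))]/T\to 0$. Since $u(x)\sim Cx$ as $x\to\infty$ with $C=c_2-\frac{2p\lambda^{1-1/p}}{\sigma^2(1-p)}>0$, and the invariant measure $\pi$ has infinite first moment, the naive estimate $\E_x[X_*(T)]\sim\frac{\sigma^2 a_*}{2}T$ would give $\E_x[u(X_*(T))]/T\to \frac{C\sigma^2 a_*}{2}>0$ instead of zero. Reconciling this requires a more delicate asymptotic analysis---either exploiting cancellations between the $x^p$, $\log x$, and $Cx$ contributions in $u$ when integrated against the transition kernel of the reflected diffusion, or refining the policy (e.g.\ to an $(a_*,b_T)$-reflection with $b_T$ coupled to $T$) so that $\E_x[u(X_*(T))]/T\to 0$ while the long-run reward remains arbitrarily close to $\lambda$. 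This subtlety is the crux of the argument and is what drives the case-study character of the result.
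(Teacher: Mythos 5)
You follow exactly the paper's strategy (one--sided reflection at $a_{*}=\lambda^{1/p}$, It\^o's formula with the $u$ of Lemma \ref{lem-mu=0}), and the gap you flag at the end is genuine --- in fact your pessimistic diagnosis is correct, not merely cautious. Writing $X_{*}=e^{\psi}$ as in the paper, the pushing term is $L_{a_{*}}(t)=a_{*}\phi(t)$ with $\phi(t)=\sup_{0\le s\le t}\{\tfrac12\sigma^{2}s-\sigma W(s)\}\ge \tfrac12\sigma^{2}t-\sigma W(t)$, so $\E[L_{a_{*}}(t)]\ge \tfrac12 a_{*}\sigma^{2}t$ and indeed $\E[L_{a_{*}}(t)]/t\to a_{*}\sigma^{2}/2$ (this is just the ergodic rate $1/(2M[a_{*},\infty)s(a_{*}))$). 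Hence $\E_{x}[u(X_{*}(T))]/T$ does \emph{not} vanish, and the long-run average reward of this policy is
\begin{displaymath}
\frac{\lambda}{1-p}-\frac{c_{2}\sigma^{2}}{2}\,\lambda^{1/p}\;\longrightarrow\;-\infty\qquad(\lambda\to\infty),
\end{displaymath}
so neither of your suggested repairs can succeed: there is no cancellation (the linear term $Cx$ in $u$ dominates), and the two--point reflection rewards $\lambda(a,b)$ of \eqref{e1:lambda} are bounded above when $\mu=0$ and $h(x)=x^{p}$. The paper closes the very step you could not by asserting in \eqref{e:La(t) average} that $\limsup_{t}\frac1t\E[L_{a_{*}}(t)]\le a_{*}\sigma^{2}\varepsilon$ for every $\varepsilon>0$; this contradicts the elementary lower bound above. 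The flaw there is that the threshold $M$ for which $y\,\frac{\sigma\sqrt t}{y+t\sigma^{2}/2}\bigl[1-\frac{\sigma^{2}t}{(y+t\sigma^{2}/2)^{2}}\bigr]\ge \sigma\sqrt t(1-\varepsilon)$ necessarily grows linearly in $t$, so it cannot be used as a $t$-independent constant in the remainder term $\sigma\sqrt t(1-\varepsilon)Me^{M}/\sqrt{2\pi}$.

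The obstruction you identified actually points at the statement itself. The HJB equation \eqref{e:HJB-lta} involves only $u'$ and $u''$, so adding the constant $\max\{0,-u(0)\}=\frac{2\lambda}{\sigma^{2}p}(\log\lambda-1-p)^{+}$ to the function of Lemma \ref{lem-mu=0} produces a $C^{2}$, \emph{nonnegative} solution (nonnegative because $u'\ge c_{1}>0$ forces $u$ to attain its minimum at $0$) of \eqref{e:HJB-lta} with the same $\lambda$. Theorem \ref{thm-verification-lta} then gives $\lambda_{0}\le\lambda$ for every $\lambda$ with $\frac{2p}{\sigma^{2}(1-p)}\lambda^{1-1/p}\le c_{2}-c_{1}$, i.e.\ $\lambda_{0}<\infty$. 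So the vanishing of $\E_{x}[u(X_{*}(T))]/T$ is indeed the crux, it fails, and the proposition cannot be established along this route; as far as I can see it is inconsistent with the paper's own verification theorem.
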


	\begin{proof}	
	For any $\lambda > 0$ satisfying  \eqref{e:lambda-large}, we
 construct a policy whose long-term average reward  equals $\lambda$. Since $\lambda $ is arbitrary, it follows that $\lambda_{0} =\infty$. To this end, recall that $a_{*} = \lambda^{1/p}$.  Let $b_{*}: = \log a_{*}$ and consider the drifted Brownian motion reflected at $b_{*}$:
\begin{displaymath}
\psi(t) : = b_{*} -\frac12 \sigma^{2} t + \sigma W(t)  + \phi(t) \ge b_{*},  \quad t\ge 0
\end{displaymath}		where $\phi(t)  
=  - \inf_{0\le s \le t}\{ - \frac12 \sigma^{2} s + \sigma W(s)\}$. One can verify immediately that $\phi$ is a nondecreasing process that increases only when $\psi(t) =b_{*}$; i.e., \begin{equation}\label{e:phi-sde}
\phi(t) =  \int_{0}^{t} \1_{\{ b_{*}\}} (\psi(s)) \d \phi(s).
\end{equation}

Now let $X(t) : = e^{\psi(t)}$.	Note that $X(0) = a_{*}$. By It\^o's formula, we have
\begin{equation}
\label{e:X(t)-sde}
\d X(t) =\bigg ( -\frac{\sigma^{2}}2 + \frac{\sigma^{2}}2  \bigg )X(t)  \d t +  \sigma X(t) \d W(t) + X(t) \d  \phi(t)=  \sigma X(t) \d W(t) +   \d  L_{a^*}(t),
\end{equation} where   $ L_{a_{*}}(t): = \int_{0}^{t} X(s) \d \phi(s)$.   We
make the following observations.
\begin{enumerate}
  \item[(i)] $X(t) \ge a_{*}$ for all $t\ge 0$. This is obvious since $\psi(t) \ge b_{*}$ for all $t\ge 0$.
  \item[(ii)]   $ L_{a_{*}}$ is a nondecreasing process that increases only when $X(t) =a_{*}$. Indeed, using \eqref{e:phi-sde} and note that $X(t) =a_{*}$ if and only if $\psi(t) =b_{*}$, it follows that
 \begin{equation}
\label{e:Local-time a}
 \begin{aligned}
 L_{a_{*}}(t)& =  \int_{0}^{t} X(s) \d \phi(s)=  \int_{0}^{t} X(s)  \1_{\{ b_{*}\}} (\psi(s)) \d  \phi(s)
\\& =  \int_{0}^{t}   \1_{\{ a_{*}\}} (X(s)) X(s) \d  \phi(s)= \int_{0}^{t}   \1_{\{ a_{*}\}} (X(s)) \d  L_{a^*}(s).
\end{aligned}
\end{equation} \end{enumerate}

 We now apply It\^o's formula to the process $u(X(t))$, where $X$ is given by \eqref{e:X(t)-sde} with $X(0) = a_{*}$.  Since $X(t) \ge a_{*}$ for all $t\ge 0$ and $u$ satisfies   \eqref{e:exm-qvi}, we have \begin{align*}
\E[u(X(t\wedge \beta_{n})]  & = u(a_{*}) + \E\bigg[\int_{0}^{t\wedge \beta_{n}} \LL u(X(s)) \d s  + \int_{0}^{t\wedge \beta_{n}} u'(X(s)) \d L_{a_{*}}(s) \bigg]   \\
  &  =  u(a_{*}) + \E\bigg[-\int_{0}^{t\wedge \beta_{n}} h(X(s)) \d s + c_{2} L_{a_{*}}(t) \bigg] + \lambda \E[t\wedge \beta_{n}],
\end{align*} where $\beta_{n}: = \inf\{ t\ge 0: X(t) \ge n\}$ and $n\in \N \cap (a_{*},\infty)$. Since   $u $ is nonnegative, rearranging the terms yields \begin{displaymath}
\E\bigg[ \int_{0}^{t\wedge \beta_{n}} h(X(s)) \d s - c_{2} L_{a_{*}}(t) \bigg] \le  u(a_{*})+   \lambda \E[t\wedge \beta_{n}].
\end{displaymath}
Now dividing both sides by $t$,  and passing to the limit first as $n\to \infty$  and then  as $t\to\infty$, we obtain
\begin{align}\label{e:<=lambda}
\limsup_{t\to\infty} \frac1t  \E\bigg[\int_{0}^{t} h(X(s)) \d s - c_{2} L_{a_{*}}(t)   \bigg]   \le \lambda.
\end{align}

 In view of Section 15.5 of \cite{KarlinT81}, the process $X$ of \eqref{e:X(t)-sde} has a unique  stationary distribution $\pi(\d x) =a_{*} x^{-2} \d x, x\in (a_{*},\infty) $. The strong law of large numbers (\cite[Theorem 4.2]{khasminskii-12}) then implies that as $t\to \infty$\begin{displaymath}
\frac1t\int_{0}^{t}h_{n}(X(s))\d s \to \int_{a_{*}}^{\infty} h_{n}(x) \pi (\d x), 
\  \text{ a.s.}
\end{displaymath} where $h_{n}(x) : = h(x) \wedge n$ and $n\in \N$. For each $n\in \N$, the random variables $\{\frac1t \int_{0}^{t} h_{n}(X(s)) \d s, t> 0 \}$ are non-negative and bounded above by $n$. Thus  we can apply the bounded convergence theorem to obtain
\begin{displaymath}
\lim_{t\to\infty} \E\bigg[ \frac1t \int_{0}^{t} h_{n}(X(s)) \d s\bigg] =  \int_{a_{*}}^{\infty} h_{n}(x) \pi (\d x).
\end{displaymath} On the other hand, since $h\ge h_{n}$ and $h_{n} \uparrow h$, we have
\begin{align}\label{e:running reward average}
\nonumber \liminf_{t\to\infty} \E\bigg[ \frac1t \int_{0}^{t} h(X(s)) \d s\bigg] &= \lim_{n\to\infty} \liminf_{t\to\infty} \E\bigg[ \frac1t \int_{0}^{t} h(X(s)) \d s\bigg]\\ \nonumber&  \ge \lim_{n\to\infty} \liminf_{t\to\infty} \E\bigg[ \frac1t \int_{0}^{t} h_{n}(X(s)) \d s\bigg] \\
\nonumber&  = \lim_{n\to\infty}  \int_{a_{*}}^{\infty} h_{n}(x) \pi (\d x)\\
&  \nonumber =  \int_{a_{*}}^{\infty} h(x) \pi (\d x)\\ &= \frac{(a_{*})^{p}}{1-p} = \frac{\lambda}{1-p} \ge \lambda,
\end{align} where the second to the last line 
follows from the monotone convergence theorem and the equality in the last line follows from the fact that  $a_{*} = \lambda^{1/p}$.

We next estimate $\E[L_{a_{*}}(t)]$. To this end,   we    observe from the third equality of \eqref{e:Local-time a} that $$L_{a_{*}}(t)  =  \int_{0}^{t}   \1_{\{ a_{*}\}} (X(s)) X(s) \d  \phi(s)= a_{*} \phi(t),$$ and hence  $\E[L_{a_{*}}(t)]= a_{*} \E[\phi(t)] $. On the other hand, since   $\phi(t)
=  - \inf_{0\le s \le t}\{ - \frac12 \sigma^{2} s + \sigma W(s)\}= \sup_{0\le s \le t}\{  \frac12 \sigma^{2} s + \sigma (-W(s))\}$,  it follows from (1.8.11) of \cite{Harrison-85} that \begin{displaymath}
\P\{\phi(t) \le y \} = \Phi\bigg(\frac{y-\frac12 \sigma^{2}t}{\sigma \sqrt t}\bigg) - e^{y} \Phi\bigg(\frac{-y-\frac12 \sigma^{2}t}{\sigma \sqrt t}\bigg),
\end{displaymath}  where $\Phi$ is the standard normal distribution function $\Phi(x): = \int_{-\infty}^{x} \frac1{\sqrt{2\pi}} e^{-\frac{z^{2}}{2}}\d z$. Then 
we can compute \begin{align}
\label{e:phi(t)-mean}
 \nonumber\E[\phi(t)]   & = \int_{0}^{\infty} y\bigg( \frac{1}{\sqrt{2\pi\sigma^{2}t}} e^{-\frac{(y-t\sg^{2}/2)^{2}}{2\sg^{2} t}} -e^{y} \Phi\bigg( \frac{-y-t\sg^{2}/2}{\sg\sqrt t}\bigg) + e^{y}  \frac{1}{\sqrt{2\pi\sigma^{2}t}} e^{-\frac{(y+t\sg^{2}/2)^{2}}{2\sg^{2} t}}  \bigg)\d y   \\
  & = \sg^{2} t \Phi(\sg\sqrt t/2) +\frac{\sqrt {2t}\sg e^{-\frac{\sg^{2}t}{8}}}{\sqrt{\pi }}- \int_{0}^{\infty} y e^{y} \Phi\bigg( \frac{-y-t\sg^{2}/2}{\sg\sqrt t}\bigg) \d y.
\end{align}
To estimate the last integral, we use the tail estimate for the standard normal distribution function (see, for example, \cite[Section 7.1]{FellerV1-68}):
\begin{displaymath}
1- \Phi(x) = \int_{x}^{\infty} \frac{1}{\sqrt{2\pi}} e^{-\frac{y^{2}}{2}} \d y \ge [x^{-1} - x^{-3}] \frac{1}{\sqrt{2\pi}} e^{-\frac{x^{2}}{2}}, \quad x> 0,
\end{displaymath} to obtain \begin{displaymath}
\Phi\bigg( \frac{-y-t\sg^{2}/2}{\sg\sqrt t}\bigg) = 1- \Phi\bigg( \frac{y+t\sg^{2}/2}{\sg\sqrt t}\bigg) \ge   \frac{\sg\sqrt t}{y+t\sg^{2}/2}\bigg[1- \frac{\sg^{2}  t}{(y+t\sg^{2}/2)^{2}} \bigg]\frac{1}{\sqrt{2\pi}} e^{-\frac{(y+t\sg^{2}/2)^{2}}{2\sg^{2} t}}.
\end{displaymath}
Moreover, we have
\begin{displaymath}
\lim_{y\to\infty} y  \frac{\sg\sqrt t}{y+t\sg^{2}/2}\bigg[1- \frac{\sg^{2}  t}{(y+t\sg^{2}/2)^{2}} \bigg]= \sigma\sqrt t.
\end{displaymath} Therefore, for any $\e > 0$, there exists an $M>0$ so that \begin{displaymath}
 y  \frac{\sg\sqrt t}{y+t\sg^{2}/2}\bigg[1- \frac{\sg^{2}  t}{(y+t\sg^{2}/2)^{2}} \bigg] \ge  \sigma\sqrt t (1-\e)\quad  \text{ for all } y\ge M.
\end{displaymath} Then we can compute
\begin{align*}
\int_{0}^{\infty} y e^{y} \Phi\bigg( \frac{-y-t\sg^{2}/2}{\sg\sqrt t}\bigg) \d y  &  \ge \int_{M}^{\infty} y e^{y} \Phi\bigg( \frac{-y-t\sg^{2}/2}{\sg\sqrt t}\bigg) \d y \\ & \ge  \int_{M}^{\infty} y e^{y}   \frac{\sg\sqrt t}{y+t\sg^{2}/2}\bigg[1- \frac{\sg^{2}  t}{(y+t\sg^{2}/2)^{2}} \bigg]\frac{1}{\sqrt{2\pi}} e^{-\frac{(y+t\sg^{2}/2)^{2}}{2\sg^{2} t}}\d y \\
  & \ge \int_{0}^{\infty}\sg\sqrt t (1-\e) \frac{1}{\sqrt{2\pi}} e^{y }e^{-\frac{(y+t\sg^{2}/2)^{2}}{2\sg^{2} t}}\d y \\ & \ \ - \int_{0}^{M}\sg\sqrt t (1-\e) \frac{1}{\sqrt{2\pi}} e^{y }e^{-\frac{(y+t\sg^{2}/2)^{2}}{2\sg^{2} t}}\d y\\
  & \ge \sg\sqrt t (1-\e)\sg \sqrt t \Phi(\sg\sqrt t/2)- \sg\sqrt t (1-\e) \frac{M e^{M}}{\sqrt{2\pi}}.
\end{align*} Plugging this estimation into \eqref{e:phi(t)-mean} gives \begin{align*}
\E[\phi(t)] &\le \sg^{2} t \Phi(\sg\sqrt t/2) +\frac{\sqrt {2t}\sg e^{-\frac{\sg^{2}t}{8}}}{\sqrt{\pi }}-  \sg\sqrt t (1-\e)\sg \sqrt t \Phi(\sg\sqrt t/2)+  \sg\sqrt t (1-\e) \frac{M e^{M}}{\sqrt{2\pi}} \\
& =  \sg^{2} t \Phi(\sg\sqrt t/2) \e  +\frac{\sqrt {2t}\sg e^{-\frac{\sg^{2}t}{8}}}{\sqrt{\pi }}+  \sg\sqrt t (1-\e) \frac{M e^{M}}{\sqrt{2\pi}}.
\end{align*}
Hence, we have \begin{equation}
\label{e:La(t) average}
\limsup_{t\to \infty} \frac1t\E[L_{a_{*}}(t)] = \limsup_{t\to \infty} \frac1t\E[ a_{*} \phi(t)] \le a_{*}\sg^{2}\e.
\end{equation}

Now combining \eqref{e:running reward average} and \eqref{e:La(t) average} yields
\begin{displaymath}
\liminf_{t\to\infty}  \E\bigg[ \frac1t \int_{0}^{t} h(X(s)) \d s -c_{2}L_{a_{*}}(t) \bigg] \ge   \lambda- c_{2} a_{*}\sg^{2}\e.
\end{displaymath} Since $\e> 0$ is arbitrary, it follows that \begin{displaymath}
\liminf_{t\to\infty}  \E\bigg[ \frac1t \int_{0}^{t} h(X(s)) \d s -c_{2}L_{a_{*}}(t) \bigg] \ge   \lambda.
\end{displaymath} This, together with \eqref{e:<=lambda}, implies that the policy $L_{a_{*}}$ of \eqref{e:Local-time a} has a long-term average reward of $\lambda$.
\comment{Concerning the calculation of  $\E[h(X(t))] = \E[X(t)^{p}] = \E[e^{p\psi(t)}]$, we have\begin{align} \label{e:h(Xt)mean}
\nonumber\E& [h(X(t))]   = \E[e^{p\psi(t)}] \\& = \int_{b^{*}}^{\infty} e^{py}\bigg( \frac{1}{\sqrt{2\pi \sigma^{2}t}} e^{-\frac{(y-b^{*} + \frac{\sigma^{2}t}{2})^{2}}{2\sigma^{2} t}} + e^{-y +b^{*}}  \Phi\bigg( \frac{-y+b^{*}+\frac{\sigma^{2} t}{2}}{\sigma\sqrt t}\bigg) +   \frac{e^{-y+b^{*}}}{\sqrt{2\pi \sigma^{2}t}} e^{-\frac{(-y+b^{*} + \frac{\sigma^{2}t}{2})^{2}}{2\sigma^{2} t}} \bigg) \d y.
\end{align}
We have \begin{displaymath}
 \int_{b^{*}}^{\infty} e^{py} \frac{1}{\sqrt{2\pi \sigma^{2}t}} e^{-\frac{(y-b^{*} + \frac{\sigma^{2}t}{2})^{2}}{2\sigma^{2} t}} \d y = e^{\frac{p((p-1) \sigma^2 t  + 2 b) }{2}} \Phi\bigg(\frac{\sigma\sqrt t(2p-1)}{2} \bigg),
\end{displaymath}
\begin{displaymath}
 \int_{b^{*}}^{\infty} e^{py} \frac{e^{-y+b^{*}}}{\sqrt{2\pi \sigma^{2}t}} e^{-\frac{(-y+b^{*} + \frac{\sigma^{2}t}{2})^{2}}{2\sigma^{2} t}} \d y = e^{\frac{p((p-1) \sigma^2 t  + 2 b) }{2}} \Phi\bigg(\frac{\sigma\sqrt t(2p-1)}{2} \bigg),
\end{displaymath} and \begin{displaymath}
 \int_{b^{*}}^{\infty} e^{py}   e^{-y +b^{*}}  \Phi\bigg( \frac{-y+b^{*}+\frac{\sigma^{2} t}{2}}{\sigma\sqrt t}\bigg) \d y =
\end{displaymath}

Concerning the expectation of $\psi(t)$, we have
\begin{displaymath}
 \E[ \psi(t)]=  \int_{b^{*}}^{\infty} y\bigg( \frac{1}{\sqrt{2\pi \sigma^{2}t}} e^{-\frac{(y-b^{*} + \frac{\sigma^{2}t}{2})^{2}}{2\sigma^{2} t}} + e^{-y +b^{*}}  \Phi\bigg( \frac{-y+b^{*}+\frac{\sigma^{2} t}{2}}{\sigma\sqrt t}\bigg) +   \frac{e^{-y+b^{*}}}{\sqrt{2\pi \sigma^{2}t}} e^{-\frac{(-y+b^{*} + \frac{\sigma^{2}t}{2})^{2}}{2\sigma^{2} t}} \bigg) \d y,
\end{displaymath}
where
\begin{displaymath}
 \int_{b^{*}}^{\infty} y \frac{1}{\sqrt{2\pi \sigma^{2}t}} e^{-\frac{(y-b^{*} + \frac{\sigma^{2}t}{2})^{2}}{2\sigma^{2} t}} \d y =\bigg(b^{*}-\frac12\sigma^{2} t\bigg)   \Phi\bigg(-\frac{\sigma\sqrt t}{2} \bigg) +\frac{\sigma\sqrt t}{\sqrt{2\pi}}e^{-\frac{\sg^{2} t}{8}},
\end{displaymath}
\begin{displaymath}
 \int_{b^{*}}^{\infty} y \frac{e^{-y+b^{*}}}{\sqrt{2\pi \sigma^{2}t}} e^{-\frac{(-y+b^{*} + \frac{\sigma^{2}t}{2})^{2}}{2\sigma^{2} t}} \d y =\bigg(b^{*}-\frac12\sigma^{2} t\bigg)   \Phi\bigg(-\frac{\sigma\sqrt t}{2} \bigg) +\frac{\sigma\sqrt t}{\sqrt{2\pi}}e^{-\frac{\sg^{2} t}{8}},
\end{displaymath} and
\begin{displaymath}
 \int_{b^{*}}^{\infty} y  e^{-y +b^{*}}  \Phi\bigg( \frac{-y+b^{*}+\frac{\sigma^{2} t}{2}}{\sigma\sqrt t}\bigg) \d y =
\end{displaymath}}
	The proof is concluded.
\end{proof}

\section{A Direct Solution Approach}\label{sect-direct-soln}
In the previous sections, we obtained the solution to the ergodic two-sided singular control problem \eqref{def: lambda_0} via the vanishing discount method. A critical condition for this approach is that we need to first solve the discounted problem \eqref{e-V_r(x) defn}. In this section, we provide a direct solution approach to the problem \eqref{def: lambda_0}.

Let us briefly explain the strategy on how to derive $\lambda_{0}$ of \eqref{def: lambda_0}. First we focus on  a class of  policies that keeps the process in the interval $[a, b] $. The proof of Corollary \ref{cor-lambda_0>0}  shows that the long-term average  reward for such a policy  is equal to $\lambda(a, b)$ of \eqref{e1:lambda}. Next we impose conditions so that $\lambda(a, b)$ achieves its   maximum value  $\lambda_{*} = \lambda(a_{*}, b_{*})$ at a pair $0< a_{*} < b_{*}< \infty$. The maximizing pair $(a_{*}, b_{*})$  further allows us to derive a $C^{2}$ solution to the HJB equation \eqref{e:HJB-lta}.  This, together with the  verification  theorem (Theorem \ref{thm-verification-lta}), reveals that $\lambda_{0} = \lambda_{*}$ and the $(a_{*}, b_{*})$-reflection policy is an optimal policy.

This approach is motivated by   the recent paper \cite{Alva-18}, which solves an ergodic  two-sided singular control problem for  general one-dimensional diffusion processes. Note that the setups in  \cite{Alva-18} is different from ours. In particular, the cost rates  associated with the  singular controls are all positive in \cite{Alva-18}. By contrast,   our formulation in \eqref{def: lambda_0}  has mixed signs for the singular controls $\eta$ and $\xi$. We also note that  Theorem 2.5 in \cite{Alva-18} only proves that the $(a_{*}, b_{*})$-reflection policy is optimal {\em in the class of two-point reflection policies.} Here we observe that   the  $(a_{*}, b_{*})$-reflection policy is optimal among all admissible controls by the verification theorem.

Recall from the proof of Corollary \ref{cor-lambda_0>0}   that   the long-term average  reward for the $(a, b)$-reflection  policy  is equal to $\lambda(a, b)$ of \eqref{e1:lambda}.  Now we wish to maximize the long-term average reward
  $\lambda(a, b) $.  First, detailed computations reveal that
\begin{align*}
 \frac{\partial}{\partial a} \lambda(a, b)& = \frac{m(a)}{M[a, b]} [ \lambda(a, b)- \pi_{2}(a)], \\
  \frac{\partial}{\partial b} \lambda(a, b)& = \frac{m(b)}{M[a, b]} [\pi_{1}(b) -\lambda(a, b) ].
\end{align*} where \begin{equation}\label{e:pi-12-defn}
\pi_{1}(x) = h(x) + c_{1} \mu(x), \quad \text{ and }\quad  \pi_{2}(x) = h(x) + c_{2} \mu(x).
\end{equation}
 Therefore the first order optimality condition says that a maximizing pair $a_{*} < b_{*}$ must satisfy
 \begin{equation}
\label{e-1st-order}
\lambda(a_{*}, b_{*} ) = \pi_{1}(b_{*}) = \pi_{2} (a_{*}).
\end{equation}
Furthermore, \eqref{e-1st-order} is equivalent to
\begin{align}
\label{e1-1st-order}
 &   \int_{a_{*}}^{b_{*}} (\pi_{2}(x) - \pi_{2}(a_{*})) m(x) \d x + \frac{c_{1} - c_{2}}{2 s(b_{*})} =0,     \\
 \label{e2-1st-order}   &   \int_{a_{*}}^{b_{*}} (\pi_{1}(x) - \pi_{1}(b_{*})) m(x) \d x + \frac{c_{1} - c_{2}}{2 s(a_{*})}=0.
\end{align} To see this, we note that using the definition of  $\lambda(a, b)$ in \eqref{e1:lambda}, we can write
\begin{align*}
  \frac{\partial}{\partial b} \lambda(a, b)& = \frac{m(b)}{M[a, b]} [\pi_{1}(b) -\lambda(a, b) ] \\
& = \frac{m(b)}{M^{2}[a, b]} \bigg(\int_{a}^{b} \pi_{1}(b)  m(x) \d x - \int_{a}^{b} h(x) m(x) \d x - \frac{c_{1}}{2 s(b) } + \frac{c_{2}}{ 2 s(a)} \bigg)\\
& = \frac{m(b)}{M^{2}[a, b]} \bigg(\int_{a}^{b}( \pi_{1}(b) - \pi_{1}(x) )  m(x) \d x  + \int_{a}^{b} c_{1} \mu(x)  m(x) \d x  - \frac{c_{1}}{2 s(b) } + \frac{c_{2}}{ 2 s(a)} \bigg)\\
& = \frac{m(b)}{M^{2}[a, b]} \bigg(\int_{a}^{b}( \pi_{1}(b) - \pi_{1}(x) )  m(x) \d x + \frac{c_{2} - c_{1}}{ 2 s(a) } \bigg).
\end{align*} This gives the equivalence between $\lambda(a_{*}, b_{*} ) = \pi_{1}(b_{*}) $ and \eqref{e2-1st-order}. The equivalence between $\lambda(a_{*}, b_{*} ) = \pi_{2}(a_{*}) $ and \eqref{e1-1st-order} can be established in a similar fashion.

To show that the first order condition \eqref{e-1st-order} or equivalently the system of equations  \eqref{e1-1st-order}--\eqref{e2-1st-order} has a solution, we impose the following conditions:

\begin{assumption}\label{assumpt-pi12-sect7}
\begin{itemize}
  \item[(i)]  The functions $h$ and $\mu$ are  continuously differentiable  with $h'(x) > 0$ for all $x >0$.  In addition,  $h(0) = \mu(0) =0$.
  \item[(ii)] For $i =1,2$, there exists an $\hat x_{i} > 0$ such that $\pi_{i} (x)$ is strictly  increasing on $(0, \hat x_{i})$ and strictly decreasing on $[\hat x_{i}, \infty) $. In addition $\lim_{x\to \infty} \pi_{1}(x) < 0$. Consequently, there exists a $b_{0} > \hat x_{1} $ such that $\pi_{1} (b_{0}) =0$.
  \item[(iii)] It holds true that
  \begin{align}
\label{e:key-condition}
  \liminf_{a\downarrow 0} \bigg[ \int_{0}^{b_{0}}[\pi_{1}(y) - \pi_{1}(b_{0})]  m(y)\d y +  \frac{c_{1} - c_{2}}{2s(a)}\bigg] > 0.
\end{align}
\end{itemize}
\end{assumption}

\begin{rem}\label{rem1-about-pi-12}
We remark that  Assumption \ref{assumpt-pi12-sect7} (i)  is  standard in the literature of singular controls; see, for example, \cite{Alva-18, AlvaH-19} and \cite{Weerasinghe-07}. Condition (ii) is motivated by   similar conditions in  \cite{Alva-18} and \cite{Weerasinghe-07}. In addition, these conditions are satisfied under the  setup in \cite{GuoP-05} as well.   Condition (iii) 
 is a technical one and it guarantees the existence of an optimizing pair $0 < a_{*} < b_{*} < \infty$ satisfying   the system of equations \eqref{e1-1st-order}--\eqref{e2-1st-order}; see the proof of Proposition \ref{prop-a*b*}. 
\end{rem}

\begin{rem}\label{rem2-about-pi-12}
We also note that  the extreme points 
$\hat x_{2},   \hat x_{1}$ in Assumption \ref{assumpt-pi12-sect7} must satisfy $\hat x_{2} <  \hat x_{1}$. Suppose on the contrary that $\hat x_{1} \le  \hat x_{2}$. Then since $\pi_{1}$ achieves its maximum at $\hat x_{1}$ and $\pi_{2} $ is strictly increasing on $(0, \hat x_{2})$, we have \begin{equation}\label{e:pi1pi2-derivatives}
 \pi_{1}' (\hat x_{1} ) = h'(\hat x_{1} ) + c_{1} \mu'(\hat x_{1})  =0,\quad  \text{ and } \quad   \pi_{2}' (\hat x_{1} ) = h'(\hat x_{1} ) + c_{2} \mu'(\hat x_{1})   \ge  0.
\end{equation} On the other hand, Assumption \ref{h-assumption} (i) says that $h'(\hat x_{1}) >  0$. This, together with fact that $0 < c_{1} < c_{2} < \infty$, implies that
\begin{displaymath}
 \pi_{2}' (\hat x_{1} ) = h'(\hat x_{1} ) + c_{2} \mu'(\hat x_{1}) =  h'(\hat x_{1} ) - c_{2} \frac{h'(\hat x_{1} ) }{c_{1}} =  - \frac{c_{2} - c_{1}}{ c_{1}}  h'(\hat x_{1} ) <  0;
\end{displaymath} resulting in a contradiction to \eqref{e:pi1pi2-derivatives}.
\end{rem}

\begin{prop}\label{prop-a*b*}
Let Assumption \ref{assumpt-pi12-sect7} hold. Then there exists a unique 
pair $0 < a_{*} < b_{*} < \infty$ satisfying   the system of equations \eqref{e1-1st-order}--\eqref{e2-1st-order}.
\end{prop}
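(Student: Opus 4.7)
My plan is to parameterize candidate pairs by the common value $\lambda := \pi_2(a_*) = \pi_1(b_*)$. By the equivalence noted immediately before the proposition, the system \eqref{e1-1st-order}--\eqref{e2-1st-order} is equivalent to $\lambda(a_*,b_*) = \pi_2(a_*) = \pi_1(b_*)$. I first observe that any solution must live in the rectangle $(0,\hat x_2) \times (\hat x_1, b_0)$: if $a_* \ge \hat x_2$ then $\pi_2 \le \pi_2(a_*)$ on $[a_*, b_*]$, contradicting the strict positivity of the right-hand side of \eqref{e1-1st-order}; symmetrically $b_* > \hat x_1$ from \eqref{e2-1st-order}; and $\pi_1(b_*) = \pi_2(a_*) > 0$ together with $\pi_1(b_0) = \pi_2(0) = 0$ forces $a_* > 0$ and $b_* < b_0$. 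This rectangle is nonempty because $\hat x_2 < \hat x_1$ (Remark \ref{rem2-about-pi-12}). Writing $\Lambda := \min\{\pi_1(\hat x_1),\pi_2(\hat x_2)\}$, Assumption \ref{assumpt-pi12-sect7}(ii) yields unique $C^1$ inverse branches $a(\lambda) \in (0,\hat x_2)$ and $b(\lambda) \in (\hat x_1, b_0)$ with $\pi_2(a(\lambda)) = \pi_1(b(\lambda)) = \lambda$ for every $\lambda \in (0,\Lambda)$.

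\textbf{Scalar function and monotonicity.} I will set
\[ \Psi(\lambda) := \int_{a(\lambda)}^{b(\lambda)} (\pi_1(x) - \lambda) m(x)\,dx - \frac{c_2-c_1}{2 s(a(\lambda))}. \]
The same algebraic manipulation as in the discussion before the proposition (using \eqref{e:convenient_identity}) shows $\Psi(\lambda) = M[a(\lambda),b(\lambda)]\bigl(\lambda(a(\lambda),b(\lambda)) - \lambda\bigr)$, so $\Psi(\lambda) = 0$ is equivalent to \eqref{e1-1st-order}--\eqref{e2-1st-order}. Next I compute $\Psi'(\lambda)$: the boundary term at $b(\lambda)$ is killed by $\pi_1(b(\lambda))=\lambda$, and the boundary term at $a(\lambda)$ exactly cancels the derivative of the $(c_2-c_1)/[2s(a(\lambda))]$ piece via the two identities $\pi_1(a)-\pi_2(a) = -(c_2-c_1)\mu(a)$ and $s'(a)/[2s^2(a)] = -\mu(a)m(a)$. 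What survives is
\[ \Psi'(\lambda) = -M[a(\lambda),b(\lambda)] < 0, \]
so $\Psi$ is strictly decreasing; this immediately yields uniqueness.

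\textbf{Endpoint signs and conclusion.} For existence I evaluate $\Psi$ at the two endpoints. As $\lambda \downarrow 0$, $a(\lambda) \downarrow 0$ and $b(\lambda) \uparrow b_0$, and since $\pi_1(b_0) = 0$, $\Psi(\lambda)$ matches in the limit the bracketed expression in Assumption \ref{assumpt-pi12-sect7}(iii), giving $\liminf_{\lambda \downarrow 0}\Psi(\lambda) > 0$. At $\lambda = \Lambda$, either $b(\Lambda) = \hat x_1$ (Case A, $\pi_1(\hat x_1) \le \pi_2(\hat x_2)$), in which case $\pi_1(x) \le \pi_1(\hat x_1) = \Lambda$ on $[a(\Lambda),\hat x_1]$ makes the integral in $\Psi(\Lambda)$ non-positive and the subtracted term $(c_2-c_1)/[2 s(a(\Lambda))] > 0$ makes $\Psi(\Lambda)$ strictly negative; or $a(\Lambda) = \hat x_2$ (Case B), in which case I use the equivalent form $\Psi(\lambda) = M[\cdot](\lambda(a,b) - \lambda)$ together with the $\pi_2$-version of this identity (equation \eqref{e1-1st-order}) to obtain $\Psi(\Lambda) = \int_{\hat x_2}^{b(\Lambda)}(\pi_2(x) - \pi_2(\hat x_2))m(x)\,dx + \tfrac{c_1 - c_2}{2 s(b(\Lambda))} < 0$, both summands being strictly negative since $\pi_2 < \pi_2(\hat x_2)$ on $(\hat x_2, b(\Lambda)]$ and $c_1 < c_2$. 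The intermediate value theorem and strict monotonicity of $\Psi$ then produce a unique $\lambda_* \in (0,\Lambda)$ with $\Psi(\lambda_*) = 0$, and setting $(a_*,b_*) := (a(\lambda_*), b(\lambda_*))$ gives the desired unique pair. The principal technical subtlety is the $\lambda \downarrow 0$ analysis, where the integration interval opens to the (possibly natural) boundary point $0$ and $1/s(a(\lambda))$ may diverge; Assumption \ref{assumpt-pi12-sect7}(iii) is formulated in its $\liminf$ form precisely to balance these two effects simultaneously, and the algebraic cancellation that yields the clean derivative $\Psi'(\lambda) = -M[a(\lambda),b(\lambda)]$ is the pivotal identity that converts the two-equation system into a single monotone scalar problem.
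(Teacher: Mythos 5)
Your proof is correct and follows essentially the same route as the paper's: both reduce the two equations to a one-parameter family via the coupling $\pi_2(a)=\pi_1(b)$, study a scalar function (your $\Psi(\lambda)$ is exactly the paper's $\ell(a)$ evaluated at $a=a(\lambda)$), prove strict monotonicity, check the signs at the two endpoints, and invoke the intermediate value theorem. Your $\lambda$-parameterization does have the small advantage of merging the paper's two cases $\pi_2(\hat x_2)\ge \pi_1(\hat x_1)$ and $\pi_2(\hat x_2)<\pi_1(\hat x_1)$ into the single endpoint $\Lambda=\min\{\pi_1(\hat x_1),\pi_2(\hat x_2)\}$, and the identity $\Psi(\lambda)=M[a(\lambda),b(\lambda)]\,(\lambda(a(\lambda),b(\lambda))-\lambda)$ together with the cancellation giving $\Psi'(\lambda)=-M[a(\lambda),b(\lambda)]$ is a clean way to see the monotonicity.

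One caveat: Assumption \ref{assumpt-pi12-sect7} only gives that $\pi_1,\pi_2$ are continuously differentiable and strictly monotone on the relevant intervals, which does not rule out $\pi_i'=0$ at interior points; hence the inverse branches $a(\lambda),b(\lambda)$ need not be $C^1$, and the computation of $\Psi'(\lambda)$ is not justified everywhere. The paper sidesteps this by showing $\ell(a_1)-\ell(a_2)>0$ for $a_1<a_2$ directly from the signs of $\pi_2(x)-\pi_2(a_1)$, $\pi_1(x)-\pi_1(b_1)$ and $\pi_1(b_2)-\pi_1(b_1)$, with no differentiation of the boundary maps; your argument should be patched the same way (or by adding the hypothesis $\pi_i'\ne 0$ off the critical points). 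The conclusion and the rest of your argument, including the endpoint sign analysis in both cases at $\lambda=\Lambda$ and the use of Assumption \ref{assumpt-pi12-sect7} (iii) as $\lambda\downarrow 0$, are unaffected.
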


\begin{proof} Since $\hat x_{2} <  \hat x_{1}$ thanks to  Remark \ref{rem2-about-pi-12}, we  only need to consider two cases: $\pi_{2} (\hat x_{2}) \ge \pi_{1} (\hat x_{1})$ and $\pi_{2} (\hat x_{2}) <  \pi_{1} (\hat x_{1})$.

{\em Case (i)}: $\pi_{2} (\hat x_{2}) \ge \pi_{1} (\hat x_{1})$. In this case, thanks to Assumption \ref{assumpt-pi12-sect7}, there exists a $y_{1} \in (0, \hat x_{2}] $ such that $\pi_{2}(y_{1} ) = \pi_{1} (\hat x_{1}) $. 
In view of Remark \ref{rem2-about-pi-12}, we have $y_{1} \le \hx_{2} < \hx_{1}$.  In addition, Assumption \ref{assumpt-pi12-sect7} also implies that for any $a\in [0, y_{1}]$, there exists a unique $b_{a} \in [\hx_{1}, b_{0})$ such that $\pi_{1}(b_{a}) = \pi_{2}(a)$. Consequently we can   consider the function
\begin{equation}
\label{e:l(a)fn-defn}
\ell(a): = \int_{a}^{b_{a}} h(x) m(x) \d x - \pi_{1} (b_{a}) M[a, b_{a}] + \frac{c_{1}}{ 2 s(b_{a})} - \frac{c_{2}}{2 s(a)}, \quad a\in [0, y_{1}].
\end{equation} Using \eqref{e:convenient_identity} and the fact that  $\pi_{1}(b_{a}) = \pi_{2}(a)$, we can rewrite $\ell(a)$ as \begin{displaymath}
\ell(a) = \int_{a}^{b_{a}} [\pi_{2}(x) - \pi_{2}(a)] m(x) \d x + \frac{c_{1}-c_{2}}{2s(b_{a})} =  \int_{a}^{b_{a}} [\pi_{1}(x) - \pi_{1}(b_{a})] m(x) \d x + \frac{c_{1}-c_{2}}{2s({a})}.
\end{displaymath}
 Since $b_{y_{1}}= \hx_{1}$, we have
\begin{align*}
\ell(y_{1})  
& =  \int_{y_{1}}^{\hx_{1}} [\pi_{1} (x) - \pi_{1}(\hx_{1})]m(x)\d x + \frac{c_{1} - c_{2}}{2 s(y_{1})}.
\end{align*} Recall  from  Assumption \ref{assumpt-pi12-sect7} that $\pi_{1}$ is strictly increasing on $(0, \hx_{1})$ and $c_{1} < c_{2}$. Hence it follows that $\ell(y_{1}) < 0. $

Next we show that $\ell$ is decreasing on $[0, y_{1}]$ and hence $\ell(0+): = \lim_{a\downarrow 0} \ell(a)$ exists. To this end, we consider $0\le a_{1} < a_{2} \le y_{1}$ and denote $b_{i} = b_{a_{i}}$ for $i= 1, 2$. Since $\pi_{1} (b_{1}) = \pi_{2} (a_{1}) < \pi_{2}(a_{2}) = \pi_{1}(b_{2}) $ and $\pi_{1}$ is decreasing on $[\hx_{1}, \infty)$, it follows that $\hx_{1} \le b_{2} < b_{1}$. As a result, we can compute
\begin{align*}
 \ell(a_{1}) -\ell(a_{2})& = \int_{a_{1}}^{a_{2}} h(x) m(x)\d x +  \int_{b_{2}}^{b_{1}} h(x) m(x)\d x  - \pi_{1}(b_{1}) M[a_{1}, b_{1}] +  \pi_{1}(b_{2}) M[a_{2}, b_{2}] \\ & \qquad + \frac{c_{1}}{2s(b_{1})} - \frac{c_{2}}{2s(a_{1})}- \frac{c_{1}}{2s(b_{2})} + \frac{c_{2}}{2s(a_{2})}   \\
& =  \int_{a_{1}}^{a_{2}} [\pi_{2}(x) - \pi_{2}(a_{1})] m(x) \d x +  \int_{b_{2}}^{b_{1}} [\pi_{1}(x)- \pi_{1}(b_{1})] m(x)\d x\\ & \qquad + [\pi_{1}(b_{2}) - \pi_{1}(b_{1}) ] M[a_{2}, b_{2}].
\end{align*} Thanks to Assumption \ref{assumpt-pi12-sect7}, the terms  $\pi_{2}(x) - \pi_{2}(a_{1})$, $\pi_{1}(x)- \pi_{1}(b_{1})$, and $\pi_{1}(b_{2}) - \pi_{1}(b_{1})$ are all positive. Therefore $ \ell(a_{1}) -\ell(a_{2}) > 0$ as desired.

Using Assumption \ref{assumpt-pi12-sect7} (iii), we have  $\ell(0+) > 0$.  Since the function  $\ell$ is also continuous, there exists a unique $a_{*}\in (0, y_{1}]$ such that $\ell(a_{*}) =0$. Denote  $b_{*} = b_{a_{*}} \in [\hx_{1}, b_{0}) $. Then
\begin{align*}
0 & = \ell(a_{*}) = \int_{a_{*}}^{b_{*}} h(x) m(x) \d x - \pi_{1} (b_{*}) M[a_{*}, b_{*}] + \frac{c_{1}}{ 2 s(b_{*})} - \frac{c_{2}}{2 s(a_{*})} \\
 & =   \int_{a_{*}}^{b_{*}} [\pi_{2} (x) - \pi_{2} (a_{*}) ] m(x) \d x - \frac{c_{2}}{2s(b_{*})} +  \frac{c_{2}}{2s(a_{*})}  + \frac{c_{1}}{ 2 s(b_{*})} - \frac{c_{2}}{2 s(a_{*})} \\
&  =  \int_{a_{*}}^{b_{*}} [\pi_{2} (x) - \pi_{2} (a_{*}) ] m(x) \d x  + \frac{c_{1} - c_{2}}{2 s(b_{*})}.
\end{align*} This gives \eqref{e1-1st-order}. Similar calculations reveal that
\begin{align*}
0 & = \ell(a_{*}) 
=  \int_{a_{*}}^{b_{*}} [\pi_{1} (x) - \pi_{1} (b_{*}) ] m(x) \d x  + \frac{c_{1} - c_{2}}{2 s(a_{*})},
\end{align*} establishing \eqref{e2-1st-order}.

{\em Case (ii)}: $\pi_{2} (\hat x_{2}) <  \pi_{1} (\hat x_{1})$.  In this case, for any $a\in [0, \hx_{2}]$, there exists a $b_{a} \in [\hx_{1}, \infty)$ such that $\pi_{2}(a) = \pi_{1}(b_{a})$. Consequently we can define the function $\ell(a)$ as in \eqref{e:l(a)fn-defn} for all  $a\in [0, \hx_{2}]$. Let $y_{2}  \in [\hx_{1}, \infty)$ be such that $\pi_{2}(\hx_{2}) = \pi_{1}(y_{2})$. Then we have $b_{\hx_{2}} = y_{2}$ and
\begin{align*}
\ell(\hx_{2}) 
  & =  \int_{\hx_{2}}^{y_{2}} [ \pi_{2}(x) - \pi_{2}(\hx_{2})] m(x) \d x +  \frac{c_{1} - c_{2}}{2 s(y_{2})}  < 0.
\end{align*} The rest of the proof is very similar to that of Case (i). We shall omit the details for brevity.
\end{proof}

\begin{prop}\label{prop-HJB-soln}
Let Assumption  \ref{assumpt-pi12-sect7} hold. Then there exist a function $u \in C^{2}([0, \infty))$ and a positive number $\lambda_{*}$ satisfying the HJB equation \eqref{e:HJB-lta}.
\end{prop}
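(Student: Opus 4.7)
The plan is to take the pair $(a_*, b_*)$ delivered by Proposition~\ref{prop-a*b*} and glue an explicit interior solution to linear pieces outside, then verify the HJB equation \eqref{e:HJB-lta} region by region. Set $\lambda_* := \lambda(a_*, b_*) = \pi_1(b_*) = \pi_2(a_*)$; that $\lambda_* > 0$ follows from $b_* \in [\hat{x}_1, b_0)$ together with $\pi_1(b_0) = 0$ and the strict monotonicity of $\pi_1$ on $[\hat{x}_1, \infty)$ supplied by Assumption~\ref{assumpt-pi12-sect7}(ii). Define $u$ on $[a_*, b_*]$ via the explicit Neumann solution in \eqref{e:u-(a, b)(x)} with $(a,b,\lambda)=(a_*, b_*, \lambda_*)$, so that $\LL u + h = \lambda_*$ and $u'(a_*) = c_2$, $u'(b_*) = c_1$. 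Extend linearly outside: $u(x) = u(a_*) + c_2(x - a_*)$ for $x \le a_*$ and $u(x) = u(b_*) + c_1(x - b_*)$ for $x \ge b_*$, shifting by a constant if needed to ensure $u \ge 0$.

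Regularity and smooth fit come essentially for free. Away from $\{a_*, b_*\}$ the function $u$ is $C^2$ by construction, and at the pasting points the ODE evaluated at the endpoints gives $\tfrac12 \sigma^2(a_*) u''(a_*+) = \lambda_* - \pi_2(a_*) = 0$ and $\tfrac12 \sigma^2(b_*) u''(b_*-) = \lambda_* - \pi_1(b_*) = 0$, matching the vanishing second derivatives of the linear pieces. For the HJB inequalities outside the continuation region, on $[0, a_*]$ the gradient bounds are trivially met (with equality $u' = c_2$), and $\LL u + h - \lambda_* = \pi_2(x) - \pi_2(a_*) \le 0$ because the construction in Proposition~\ref{prop-a*b*} forces $a_* \le \hat{x}_2$ and $\pi_2$ is monotone increasing on $[0, \hat{x}_2]$; the symmetric argument using $b_* \ge \hat{x}_1$ and the decrease of $\pi_1$ on $[\hat{x}_1, \infty)$ handles $[b_*, \infty)$.

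The main obstacle is verifying the gradient constraint $c_1 \le u'(x) \le c_2$ throughout $(a_*, b_*)$. Writing $g := u'$, the ODE becomes $\tfrac12 \sigma^2 g' + \mu g = \lambda_* - h$ with $g(a_*) = c_2$, $g(b_*) = c_1$, and the first-order conditions supply the additional smooth-fit data $g'(a_*) = g'(b_*) = 0$. Differentiating the ODE at the endpoints yields $\tfrac12 \sigma^2(a_*) g''(a_*) = -\pi_2'(a_*) \le 0$ and $\tfrac12 \sigma^2(b_*) g''(b_*) = -\pi_1'(b_*) \ge 0$, so $g$ leaves $c_2$ non-increasingly at $a_*$ and arrives at $c_1$ non-decreasingly at $b_*$. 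My plan is then to run a maximum-principle-type argument on $g - c_2$ and $c_1 - g$: at any hypothetical interior extremum where $g > c_2$ (resp.\ $g < c_1$), the condition $g' = 0$ converts the ODE into an algebraic relation $\mu(x^*)(g(x^*) - c_i) = \lambda_* - \pi_i(x^*)$, which combined with the single-peak structure of $\pi_i$ from Assumption~\ref{assumpt-pi12-sect7}(ii) and the ordering $a_* \le \hat{x}_2 \le \hat{x}_1 \le b_*$ from Remark~\ref{rem2-about-pi-12} should force an inconsistent sign. I anticipate this interior bound to be the most delicate step, and it is precisely where the single-peaked hypotheses on $\pi_1, \pi_2$ enter in an essential way.
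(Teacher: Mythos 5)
Your construction of $(u,\lambda_*)$, the smooth-fit computation at $a_*$ and $b_*$, and the verification of $\LL u + h - \lambda_* \le 0$ outside $[a_*,b_*]$ all coincide with the paper's proof. The one step you flag as delicate --- the gradient constraint $c_1\le u'\le c_2$ on $(a_*,b_*)$ --- is exactly where your sketch does not close. At an interior extremum $x^*$ of $g:=u'$ with $g(x^*)>c_2$, the relation $\mu(x^*)\bigl(g(x^*)-c_2\bigr)=\lambda_*-\pi_2(x^*)$ is correct, but it yields no contradiction unless you control the sign of $\mu(x^*)$, and Assumption \ref{assumpt-pi12-sect7} does not fix that sign: $\mu(0)=0$, and in the Verhulst--Pearl example of Section \ref{sect-direct-soln} the drift $\mu(x)=\mu x(1-\gamma x)$ changes sign inside the state space. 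The single-peak structure of $\pi_2$ only tells you the sign of $\lambda_*-\pi_2(x^*)$ on either side of the point where $\pi_2$ returns to the level $\lambda_*=\pi_2(a_*)$; paired with an unknown sign of $\mu(x^*)$, both cases are consistent, so the ``inconsistent sign'' you anticipate does not materialize. If you want a pointwise argument, you should work at first/last crossing points of the levels $c_1,c_2$ rather than at extrema: at a point where $g=c_i$ the ODE gives $\tfrac12\sigma^2 g' = \lambda_*-\pi_i$ with \emph{no} $\mu$ ambiguity, and the topologically forced sign of $g'$ at an upward (or downward) crossing can then be played against the single-peak structure of $\pi_i$.

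The paper avoids this issue entirely by arguing globally. It writes $u'(x)=c_1+2s(x)k(x)$ with $k(x)=\int_x^{b_*}[\pi_1(y)-\lambda_*]\,m(y)\,\d y$, notes $k(b_*)=0$ and $k(a_*)=\tfrac{c_2-c_1}{2s(a_*)}>0$ from \eqref{e2-1st-order}, and uses the fact that $k'=-(\pi_1-\lambda_*)m$ changes sign at most once (by the single-peak property of $\pi_1$) to conclude $k\ge 0$, i.e.\ $u'\ge c_1$. For the upper bound it sets $\rho(x)=k(x)+\tfrac{c_1-c_2}{2s(x)}$, computes $\rho'=(\lambda_*-\pi_2)m$, and uses $\rho(a_*)=\ell(a_*)=0$, $\rho(b_*)<0$, and the single sign change of $\lambda_*-\pi_2$ to get $\rho\le0$, i.e.\ $u'\le c_2$. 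You should either adopt this integral argument or replace your extremum argument with the crossing-point version; as written, the key inequality is not established.
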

\begin{proof} Let $0 < a_{*} < b_{*} < \infty$ be as in Proposition \ref{prop-a*b*} and define
\begin{equation}
\label{eq:lambda*defn}
\lambda_{*}: = \frac{1}{M[a_{*}, b_{*}]} \bigg[\int_{a_{*}}^{b_{*}} h(x) m(x) \d x + \frac{c_{1}}{2s(b_{*})} -  \frac{c_{2}}{2s(a_{*})}  \bigg].
\end{equation}In addition, we consider the function
\begin{equation}
\label{e:soln-HJB}
u(x) =  \begin{cases}
   c_{1} x+ \int_{a_{*}}^{x} 2s(u) \int_{u}^{b_{*}}[\pi_{1}  (y) - \lambda_{*}] m(y) \d y\d u, & x\in [a_{*}, b_{*}],  \\
    c_{1} (x-b_{*}) + u(b_{*}),  &  x > b_{*}, \\
    c_{2} (x-a_{*}) + u(a_{*}), & x < a_{*}.
\end{cases}
\end{equation} We next  verify that the pair $(u, \lambda_{*})$ satisfies \eqref{e:HJB-lta}. First it is obvious that $u$ is continuously differentiable and satisfies $u'(x) = c_{1}$ for $x \ge  b_{*}$ and $u'(x) = c_{2}$ for $x \le a_{*}$.

Next we show that $u$ is $C^{2}$ and satisfies $\LL u( x) + h(x)-\lambda_{*}=0 $ for $x\in (a_{*}, b_{*})$. To this end, we compute for $x\in (a_{*}, b_{*})$:
\begin{align*}
 u'(x)   & = c_{1}+ 2 s(x)  \int_{x}^{b_{*}}[ \pi_{1}  (y)  - \lambda_{*}] m(y) \d y,   \\
 u''(x)    &  = -4s(x) \frac{\mu(x)}{\sigma^{2}(x)}  \int_{x}^{b_{*}}[ \pi_{1}  (y) - \lambda_{*}] m(y) \d y  + 2 s(x) [- \pi_{1} (x) + \lambda_{*}]m(x).
\end{align*} Consequently it follows that $u$ satisfies $ \LL u( x) + h(x)-\lambda_{*} =0$ for $x\in (a_{*}, b_{*}) $.

To show that $u$ is $C^{2}$, it suffices to show that $u$ is $C^{2}$ at the points   $a_{*}$ and $  b_{*}$.
Recall that $a_{*} < b_{*}$ and $\lambda_{*} $ satisfy \eqref{e-1st-order} thanks to Proposition \ref{prop-a*b*}. Therefore it follows immediately that $u'(b_{*}-) = c_{1} $ and  $u''(b_{*}-) = 0 $. In addition,      we can use \eqref{e2-1st-order} to compute  
\begin{align*}
u'(a_{*}+) =c_{1}+  2 s(a_{*})  \int_{a_{*}}^{b_{*}}[ \pi_{1}  (y)  - \lambda_{*}] m(y) \d y
  = c_{1} +  2 s(a_{*})  \frac{c_{2}- c_{1}}{2s(a_{*})}
 = c_{2},
\end{align*} and  
\begin{align*}
 u''(a_{*}+) &=  -4s(a_{*}) \frac{\mu(a_{*})}{\sigma^{2}(a_{*})}  \int_{a_{*}}^{b_{*}}[ \pi_{1}  (y)  - \lambda_{*}] m(y) \d y  + 2 s(a_{*}) [- \pi_{1}  (a_{*}) + \lambda_{*}]m(a_{*}) \\
& =  -4s(a_{*}) \frac{\mu(a_{*})}{\sigma^{2}(a_{*})}  \cdot \frac{c_{2}- c_{1}}{2s(a_{*})} + \frac{2}{\sigma^{2}(a_{*})} [- \pi_{1}  (a_{*}) + \lambda_{*}]\\
& = -\frac{2}{\sigma^{2}(a_{*})}[(c_{2}- c_{1} ) \mu(a_{*}) + \pi_{1} (a_{*}) - \lambda_{*}]\\
& =0,
\end{align*} where the last equality follows from  \eqref{e-1st-order}. Therefore we have shown that $u\in C^{2}((0,\infty))$.

Recall that the function $\pi_{1}$ is decreasing on $[\hx_{1}, \infty)$ and $b_{*} \in [\hx_{1}, \infty)$ by the proof of Proposition \ref{prop-a*b*}. Therefore for any $x\ge b_{*}$, \begin{align*}
  \LL u( x) + h(x)-\lambda_{*} = c_{1} \mu(x) + h(x) - \lambda_{*} = \pi_{1}(x) -\lambda_{*}  \le \pi_{1}(b_{*}) -\lambda_{*}  =0.
\end{align*} Similar argument reveals that $  \LL u( x) + h(x)-\lambda_{*} \le 0$ for all $x\in a_{*}$.

It remains to show that $u'(x) \in [c_{1}, c_{2}]$ for $x\in  (a_{*}, b_{*})$. To this end, we first consider the function $$k(x): =  \int_{x}^{b_{*}}[ \pi_{1}  (y)  - \lambda_{*}] m(y) \d y, x\in [a_{*}, b_{*}]. $$  We claim that $k$ is nonnegative, which, in turn, implies that $v'(x) = c_{1} + 2 s(x) k(x) \ge c_{1}$ on $[a_{*}, b_{*}]$.  To see the claim, we consider two cases. (i) If $\pi_{1} (y) \ge \lambda_{*} $ for all $y\in [a_{*}, b_{*}] $, then $k(x) \ge 0$ for all $x\in [a_{*}, b_{*}] $. (ii) Otherwise, since   $a_{*} \le \hx_{2} < \hx_{1}$, $b_{*} > \hx_{1}$, the monotonicity of $\pi_{1}$ implies that there exists a $y_{1} \in (a_{*}, b_{*})$ so that $ \pi_{1}  (y)  - \lambda_{*} $ is negative on $[a_{*}, y_{1})$ and positive on $(y_{1}, b_{*})$. Consequently the function $k$ is increasing on $[a_{*}, y_{1})$ and  then decreasing  on $(y_{1}, b_{*})$. On the other hand, we have $k(b_{*}) =0$, and thanks to \eqref{e2-1st-order},  $k(a_{*}) = \frac{c_{2}-c_{1}}{2 s(a_{*})} > 0$. Therefore we  again  have  the claim   that $k(x) \ge 0$ for all  $x\in [a_{*}, b_{*}] $.

Finally, to show that $v'(x) \le c_{2}$ on $[a_{*}, b_{*}]$,  we consider the function \begin{displaymath}
\rho(x): = k(x) + \frac{c_{1}-c_{2}}{2s(x)}, \quad x\in [a_{*}, b_{*}].
\end{displaymath} Note that  $\rho(b_{*}) = \frac{c_{1}-c_{2}}{2s(b_{*})} < 0 $. In addition,  $\rho(a_{*}) = \ell(a_{*}) =0 $ thanks to the proof of Proposition \ref{prop-a*b*}. Next we  compute \begin{displaymath}
\rho'(x) = -(\pi_{1}  (x)  - \lambda_{*} ) m(x) +  \frac{c_{1}-c_{2}}{2}\cdot \frac{2\mu (x) }{\sigma^{2}(x) s(x)} = (\lambda_{*}-\pi_{2}(x)) m(x).
\end{displaymath} As a result, if $\lambda_{*}-\pi_{2}(x) < 0$ for all $x\in [a_{*}, b_{*}] $, then $\rho(x)  \le \rho(a_{*}) = 0$   for all $x\in [a_{*}, b_{*}] $. Otherwise, using the monotonicity assumption of $\pi_{2}$ in Assumption \ref{assumpt-pi12-sect7}, there exists some $y_{2} \in (a_{*}, b_{*})$ so that $\lambda_{*}-\pi_{2}(x)  <0 $ for $x\in [a_{*}, y_{2})$ and $> 0$ for $x\in (y_{2}, b_{*}]$. This, in turn, implies that $\rho(x)$ is decreasing on $ [a_{*}, y_{2})$ and increasing on $ (y_{2}, b_{*}]$. In such a case, we still have $\rho(x)  \le  0$   for all $x\in [a_{*}, b_{*}] $. Then it follows that $ 2 s(x) k(x)  \le c_{2}- c_{1}$ and hence  $v'(x) =  c_{1} + 2 s(x) k(x) \le c_{2} $.

To summarize, we have shown that the function $u$ of \eqref{e:soln-HJB} is $C^{2}$ and satisfies
\begin{displaymath}
\begin{cases}
   \LL u(x) + h(x) - \lambda_{*} =0, \ \ c_{1} \le u'(x) \le c_{2},   & x\in (a_{*}, b_{*}), \\
      \LL u(x) + h(x) - \lambda_{*} \le 0, \ \   u'(x) = c_{2}, &  x\le a_{*}, \\
         \LL u(x) + h(x) - \lambda_{*} \le 0, \ \   u'(x) = c_{1}, &  x\ge b_{*}.
\end{cases}
\end{displaymath} In particular,   $(u, \lambda_{*})$ satisfies the HJB equation \ref{e:HJB-lta}. This concludes the  proof.
 \end{proof}

Now we present the main result of this section.

\begin{theorem}\label{thm-sect7} Let Assumption \ref{assumpt-pi12-sect7} hold. Then there exist $0< a_{*} < b_{*} < \infty$ so that the $(a_{*}, b_{*})$-reflection policy is optimal for problem \ref{def: lambda_0}. In addition,  $\lambda_{0} = \lambda_{*}$, where $\lambda_{*}$ is defined in \eqref{eq:lambda*defn}.
\end{theorem}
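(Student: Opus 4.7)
The plan is to combine the three ingredients already in place: the maximizing pair from Proposition \ref{prop-a*b*}, the classical solution of the HJB equation from Proposition \ref{prop-HJB-soln}, and the verification theorem (Theorem \ref{thm-verification-lta}). First I would invoke Propositions \ref{prop-a*b*} and \ref{prop-HJB-soln} to produce the candidate pair $0<a_*<b_*<\infty$, the number $\lambda_*$ defined by \eqref{eq:lambda*defn}, and a function $u\in C^2([0,\infty))$ that satisfies the HJB equation \eqref{e:HJB-lta} with $\lambda$ replaced by $\lambda_*$.

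Next I would check the hypotheses of Theorem \ref{thm-verification-lta}, namely that $u\ge 0$ and $\lambda_*\ge 0$. Nonnegativity of $u$ is a triviality: the function constructed in \eqref{e:soln-HJB} is continuous, is linear with slope $c_2>0$ on $[0,a_*]$ and with slope $c_1>0$ on $[b_*,\infty)$, so it is bounded below on $[0,\infty)$; adding a constant $K$ to $u$ does not disturb \eqref{e:HJB-lta} because the operator $\LL$ involves only the derivatives of $u$ and the gradient constraints involve only $u'$. Positivity of $\lambda_*$ follows from the identity $\lambda_*=\pi_1(b_*)$ (from \eqref{e-1st-order}) together with Assumption \ref{assumpt-pi12-sect7}: at the maximizing pair we have $\pi_1(b_*)\ge\pi_1(b_0)\vee 0$ in the two cases of the proof of Proposition \ref{prop-a*b*}, and in fact $a_*<\hat x_1$ and $b_*<b_0$ force $\pi_1(b_*)>0$. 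Once these are verified, Theorem \ref{thm-verification-lta} immediately yields the upper bound $\lambda_0\le\lambda_*$.

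For the matching lower bound, I would exhibit the $(a_*,b_*)$-reflection policy $\vphi_*=L_{a_*}-L_{b_*}$ as admissible (by Lemma \ref{lem:(LaLb)admissible}) and compute its long-term average reward. This is exactly the calculation performed inside the proof of Corollary \ref{cor-lambda_0>0}: applying It\^o's formula to $u(X_*(t))$, using $\LL u + h=\lambda_*$ on $(a_*,b_*)$ and the Neumann conditions $u'(a_*)=c_2,\ u'(b_*)=c_1$ at the reflecting boundaries, rearranging, dividing by $t$, and passing to the limit gives
\begin{equation*}
\lim_{t\to\infty}\frac1t\E_x\!\left[\int_0^t h(X_*(s))\d s + c_1 L_{b_*}(t)-c_2 L_{a_*}(t)\right] \;=\; \lambda_*,
\end{equation*}
which, by the formula \eqref{e1:lambda} for $\lambda(a,b)$, is exactly $\lambda(a_*,b_*)=\lambda_*$. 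Hence $\lambda_0\ge\lambda(a_*,b_*)=\lambda_*$.

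Combining the two inequalities gives $\lambda_0=\lambda_*$ and the optimality of $\vphi_*$, completing the proof. The only non-mechanical step is the verification that $\lambda_*>0$, but this is already latent in Proposition \ref{prop-a*b*} (since $\ell(a_*)=0$ and the nondegeneracy hypothesis \eqref{e:key-condition} force $b_*<b_0$, hence $\pi_1(b_*)>0$). Everything else is an assembly of results already proved in the excerpt.
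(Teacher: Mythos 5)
Your proposal is correct and follows essentially the same route as the paper: invoke Proposition \ref{prop-HJB-soln} for the $C^{2}$ solution $(u,\lambda_*)$ of \eqref{e:HJB-lta}, apply the verification theorem (Theorem \ref{thm-verification-lta}) to get $\lambda_0\le\lambda_*$, and reuse the It\^o computation from the proof of Corollary \ref{cor-lambda_0>0} to show the $(a_*,b_*)$-reflection policy attains $\lambda(a_*,b_*)=\lambda_*$, whence $\lambda_0=\lambda_*$. Your extra checks (nonnegativity of $u$ up to an additive constant, and positivity of $\lambda_*$ via $\lambda_*=\pi_1(b_*)=\pi_2(a_*)$) only make explicit hypotheses the paper handles by noting $u$ is bounded below and by asserting $\lambda_*>0$ in Proposition \ref{prop-HJB-soln}.
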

\begin{proof} Since $(u, \lambda_{*})$ is a solution to the HJB equation, where the function $u$ of \eqref{e:soln-HJB}  is bounded from below, it follows from Theorem \ref{thm-verification-lta} that $\lambda_{0} \le \lambda_{*}$. Furthermore,  the proof of Corollary \ref{cor-lambda_0>0} says that the  long-term average reward of the  $(a_{*}, b_{*})$-policy is equal to  $\lambda_{*}$ and therefore optimal. The  proof is complete.
\end{proof}	

\begin{example}\label{exam-VP-diffusion}
In this example, we consider the ergodic two-sided singular control problem for the  Verhulst-Pearl diffusion:
\begin{displaymath}
\d X(t) = \mu X(t) (1 - \gamma X(t))  \d t+  \sigma X(t) \d W( t)+ \d \xi(t) -\d\eta(t),
\end{displaymath} where $\mu > 0$ is the per-capita growth rate, $\frac{1}{\gamma} > 0$ is the carrying capacity, and $\sigma^{2}>0$ is  the variance parameter measuring the fluctuations in the per-capita growth rate. The singular control $\vphi: = \xi -\eta$ is as in Section \ref{sect-formulation}. Here, we can regard $\xi(t)$ and $\eta(t)$ as the cumulative renewing and harvesting amount up to time $t$, respectively. The scale and speed densities are
\begin{displaymath}
s(x) = x^{-\alpha} e^{\gamma \alpha (x-1)}, \quad m(x) = \frac{1}{\sigma^{2}} x^{\alpha -2} e^{-\gamma \alpha (x-1)}, \quad x >0.
\end{displaymath} where $\alpha=\frac{2\mu}{\sigma^{2}}$. Detailed computations using the 
criteria  given in Chapter 15 of \cite{KarlinT81} reveal that both 0 and $\infty$ are natural boundary points.

For positive constants $c_{1} < c_{2}$ and  a nonnegative function $h$ satisfying Assumption \ref{h-assumption} (i) and (ii), we consider the problem
\begin{equation}
\label{e:lambda_0-Verhulst-Pearl}
\lambda_{0}: = \sup_{\vphi(\cdot)\in \A_{x}} \liminf_{T\to\infty} \frac1T \E_{x}\bigg[ \int_{0}^{T} h(X(t))\d t + c_{1}\eta( T) - c_{2} \xi(T)\bigg],
\end{equation} where $\A_{x}$ is the set of admissible controls    as defined in \eqref{e:set A defn}.

We now claim that Assumption \ref{assumpt-pi12-sect7} is satisfied and hence in view of Theorem \ref{thm-sect7},  the optimal value $\lambda_{0}$ of \eqref{e:lambda_0-Verhulst-Pearl} is achieved by the $(a_{*}, b_{*})$-reflection policy, where $0< a_{*}< b_{*} < \infty$. Assumption \ref{assumpt-pi12-sect7} (i)  is obviously  satisfied. We now verify Assumption \ref{assumpt-pi12-sect7} (ii). For $i=1,2$, we have  $\pi_{i} (x) = h(x) +c_{i} \mu x(1-\gamma x)$ and hence $$\pi'_{i}(x) = h'(x) +c_{i}\mu -2c_{i}\mu\gamma x,\ \quad  x\ge 0.$$ Noting that $h'(x) > 0$ and $c_{i}, \mu > 0$, we see from the above equation that  $\pi'_{i}(x) > 0$ for $x> 0$ in a neighborhood of 0. Since $h$ satisfies  Assumption \ref{h-assumption} (i) and (ii), it follows that there exists some positive number $M$ so that $h'(x) < 1$ for all $x\ge M$. Thus $\lim_{x\to\infty}\pi'_{i} (x) = -\infty$. Then the continuity of $\pi'_{i}(x)$ implies that there exists a $\hx_{i}>0$ so that $\pi'_{i}(\hx_{i})=0$. Furthermore, since $h$ is strictly concave, $h'(x)$ is deceasing. Therefore  $\pi'_{i}(x)< 0$ for all $x> \hx_{i}$. Hence $\pi_{i}$ is first increasing on $[0, \hx_{i})$ and then decreasing on $[\hx_{i},\infty)$. On the other hand, using  Assumption \ref{h-assumption}  (ii), we have $\lim_{x\to\infty} \pi_{i}(x)=-\infty$ and hence there exists a $b_{0} > \hx_{1}$ so that $\pi_{1}(b_{0}) =0 $.  Assumption \ref{assumpt-pi12-sect7} (ii) is verified. 

Finally, note that the arguments in the previous paragraph also reveal that   $\pi_{1}$ is strictly positive on $(0, b_{0})$. This, together with facts that  $\pi_{1}(b_{0}) =0$ and $\lim_{a\downarrow 0} \frac1{s(a)} = 0$,  leads to  \eqref{e:key-condition} and hence Assumption \ref{assumpt-pi12-sect7} (iii) is in force. The proof is complete.
\end{example}



\begin{thebibliography}{99}\parskip=-4pt



\bibitem[Alvarez, 2018]{Alva-18}
Alvarez, L. H.~R. (2018).
\newblock A class of solvable stationary singular stochastic control problems.
\newblock {\em preprint}, \url{https://arxiv.org/pdf/1803.03464.pdf}.

\bibitem[Alvarez and Shepp, 1998]{A-Shepp}
Alvarez, L. H.~R. and Shepp, L.~A. (1998).
\newblock Optimal harvesting of stochastically fluctuating populations.
\newblock {\em J. Math. Biol.}, 37(2):155--177.

\bibitem[{Alvarez} and Hening, 2019]{AlvaH-19}
{Alvarez.}, L.~H. and Hening, A. (2019).
\newblock Optimal sustainable harvesting of populations in random environments.
\newblock to appear in {\em Stochastic Process. Appl.}, \url{https://doi.org/10.1016/j.spa.2019.02.008}

\bibitem[Borodin and Salminen, 2002]{boro:02}
Borodin, A.~N. and Salminen, P. (2002).
\newblock {\em Handbook of {B}rownian motion---facts and formulae}.
\newblock Probability and its Applications. Birkh{\"a}user Verlag, Basel,
  second edition.

\bibitem[Burdzy et~al., 2009]{BurdzyKR09}
Burdzy, K., Kang, W., and Ramanan, K. (2009).
\newblock The {S}korokhod problem in a time-dependent interval.
\newblock {\em Stochastic Process. Appl.}, 119(2):428--452.

\bibitem[De~Angelis and Ferrari, 2014]{DeAngelisF-14}
De~Angelis, T. and Ferrari, G. (2014).
\newblock A stochastic partially reversible investment problem on a finite
  time-horizon: free-boundary analysis.
\newblock {\em Stochastic Process. Appl.}, 124(12):4080--4119.

\bibitem[Feller, 1968]{FellerV1-68}
Feller, W. (1968).
\newblock {\em An introduction to probability theory and its applications.
  {V}ol. {I}}.
\newblock Third edition. John Wiley \& Sons, Inc., New York-London-Sydney.

\bibitem[Guo and Pham, 2005]{GuoP-05}
Guo, X. and Pham, H. (2005).
\newblock Optimal partially reversible investment with entry decision and
  general production function.
\newblock {\em Stochastic Process. Appl.}, 115(5):705--736.

\bibitem[Harrison, 1985]{Harrison-85}
Harrison, J.~M. (1985).
\newblock {\em Brownian motion and stochastic flow systems}.
\newblock Wiley Series in Probability and Mathematical Statistics: Probability
  and Mathematical Statistics. John Wiley \& Sons, Inc., New York.

\bibitem[Hening and Tran, 2020]{HeniT-20}
Hening, A. and Tran, K.~Q. (2020).
\newblock Harvesting and seeding of stochastic populations: analysis and
  numerical approximation.
\newblock {\em J. Math. Biol.}, 81(1):65--112.

\bibitem[Hening et~al., 2019]{HeniTPY-19}
Hening, A., Tran, K.~Q., Phan, T.~T., and Yin, G. (2019).
\newblock Harvesting of interacting stochastic populations.
\newblock {\em J. Math. Biol.}, 79(2):533--570.

\bibitem[Hynd, 2013]{Hynd-13}
Hynd, R. (2013).
\newblock Analysis of {H}amilton-{J}acobi-{B}ellman equations arising in
  stochastic singular control.
\newblock {\em ESAIM Control Optim. Calc. Var.}, 19(1):112--128.

\bibitem[Jack and Zervos, 2006]{JackZ-06}
Jack, A. and Zervos, M. (2006).
\newblock Impulse control of one-dimensional {I}t\^{o} diffusions with an
  expected and a pathwise ergodic criterion.
\newblock {\em Appl. Math. Optim.}, 54(1):71--93.

\bibitem[Jin et~al., 2013]{JinYY-13}
Jin, Z., Yang, H., and George~Yin, G. (2013).
\newblock Numerical methods for optimal dividend payment and investment
  strategies of regime-switching jump diffusion models with capital injections.
\newblock {\em Automatica J. IFAC}, 49(8):2317--2329.

\bibitem[Karatzas, 1983]{Kara-83}
Karatzas, I. (1983).
\newblock A class of singular stochastic control problems.
\newblock {\em Adv. in Appl. Probab.}, 15(2):225--254.

\bibitem[Karatzas and Shreve, 1991]{Karatzas-S}
Karatzas, I. and Shreve, S.~E. (1991).
\newblock {\em Brownian motion and stochastic calculus}, volume 113 of {\em
  Graduate Texts in Mathematics}.
\newblock Springer-Verlag, New York, second edition.

\bibitem[Karlin and Taylor, 1981]{KarlinT81}
Karlin, S. and Taylor, H.~M. (1981).
\newblock {\em A second course in stochastic processes}.
\newblock Academic Press Inc. [Harcourt Brace Jovanovich Publishers], New York.

\bibitem[Khasminskii, 2012]{khasminskii-12}
Khasminskii, R. (2012).
\newblock {\em Stochastic stability of differential equations}, volume~66 of
  {\em Stochastic Modelling and Applied Probability}.
\newblock Springer, New York, 2nd edition.

\bibitem[Lindensj\"{o} and Lindskog, 2020]{LindL-20}
Lindensj\"{o}, K. and Lindskog, F. (2020).
\newblock Optimal dividends and capital injection under dividend restrictions.
\newblock {\em Math. Methods Oper. Res.}, 92(3):461--487.

\bibitem[Lungu and {\O}ksendal, 1997]{L-Oksendal}
Lungu, E. and {\O}ksendal, B. (1997).
\newblock Optimal harvesting from a population in a stochastic crowded
  environment.
\newblock {\em Math. Biosci.}, 145(1):47--75.

\bibitem[Matom\"{a}ki, 2012]{Mato-12}
Matom\"{a}ki, P. (2012).
\newblock On solvability of a two-sided singular control problem.
\newblock {\em Math. Methods Oper. Res.}, 76(3):239--271.

\bibitem[Menaldi and Robin, 2013]{MenaR-13}
Menaldi, J.~L. and Robin, M. (2013).
\newblock Singular ergodic control for multidimensional {G}aussian-{P}oisson
  processes.
\newblock {\em Stochastics}, 85(4):682--691.

\bibitem[Song et~al., 2011]{Song-S-Z}
Song, Q.~S., Stockbridge, R.~H., and Zhu, C. (2011).
\newblock On optimal harvesting problems in random environments.
\newblock {\em SIAM J. Control Optim.}, 49(2):859--889.

\bibitem[Weerasinghe, 2002]{Weera-02}
Weerasinghe, A. (2002).
\newblock Stationary stochastic control for {I}t\^{o} processes.
\newblock {\em Adv. in Appl. Probab.}, 34(1):128--140.

\bibitem[Weerasinghe, 2005]{Weera-05}
Weerasinghe, A. (2005).
\newblock A bounded variation control problem for diffusion processes.
\newblock {\em SIAM J. Control Optim.}, 44(2):389--417.

\bibitem[Weerasinghe, 2007]{Weerasinghe-07}
Weerasinghe, A. (2007).
\newblock An abelian limit approach to a singular ergodic control problem.
\newblock {\em SIAM J. Control Optim.}, 46(2):714--737.

\bibitem[Wu and Chen, 2017]{WuChen-17}
Wu, Y.-L. and Chen, Z.-Y. (2017).
\newblock On the solutions of the problem for a singular ergodic control.
\newblock {\em J. Optim. Theory Appl.}, 173(3):746--762.

\end{thebibliography}

\end{document}